\documentclass{amsart}
\usepackage{amsthm, amssymb, amsfonts, amscd}
\usepackage{graphicx}
\usepackage{stmaryrd} 
\usepackage{bbold}
\usepackage{verbatim}
\usepackage[all,arc]{xy}
\usepackage{enumerate}
\usepackage{mathrsfs, mathabx}
\usepackage{xcolor}
\usepackage{hyperref}
\usepackage{soul}
\usepackage{comment}
\usepackage{esint} 
\usepackage[all]{xy}
\usepackage[margin=1.0in]{geometry}
\usepackage[
   open,
   openlevel=2,
   atend
 ]{bookmark}[2011/12/02]
\usepackage{graphics}
\bookmarksetup{color=blue}
\allowdisplaybreaks

\usepackage{enumitem}
\usepackage{tikz-cd}
\usepackage[T1]{fontenc}
\usepackage{mathtools}
\usepackage{setspace}
\onehalfspacing
\allowdisplaybreaks
\predisplaypenalty=100

\DeclareMathOperator{\im}{im}
\DeclareMathOperator{\cok}{cok}
\newcommand{\mr}{\mathrm}
\newcommand{\ind}{{\mr{ind}}}
\newcommand{\Ind}{{\mr{Ind}}}
\newcommand{\Cl}{{\mr{Cl}}}
\newcommand{\Gal}{\mr{Gal}}
\newcommand{\Hom}{\mr{Hom}}
\newcommand{\Aut}{\mr{Aut}}
\newcommand{\GL}{\mr{GL}}

\newcommand{\rank}{\mr{rank}}
\newcommand{\res}{\mr{R}}
\newcommand{\nm}{\mr{N}}
\newcommand{\NF}{\mr{NF}}
\newcommand{\tor}{\mr{tor}}

\newcommand{\cond}{\mr{Cond}}
\newcommand{\lcm}{\mr{lcm}}
\newcommand{\ord}{\mr{ord}}
\newcommand{\Sqf}{\mr{Sqf}}

\newcommand{\bb}{\mathbb}
\newcommand{\bZ}{\bb{Z}}
\newcommand{\bQ}{\bb{Q}}
\newcommand{\bC}{\bb{C}}
\newcommand{\R}{\bb{C}}
\newcommand{\G}{\bb{G}}
\newcommand{\CC}{\mathcal{C}}
\newcommand{\ve}{\varepsilon}

\theoremstyle{definition} 
\newtheorem{theorem}{Theorem}[section]
\newtheorem{corollary}[theorem]{Corollary}
\newtheorem{proposition}[theorem]{Proposition}
\newtheorem{lemma}[theorem]{Lemma}
\newtheorem{conjecture}[theorem]{Conjecture}
\newtheorem{remark}[theorem]{Remark}
\newtheorem{example}[theorem]{Example}

\begin{document}
	
\title[Counting algebraic tori over $\bQ$ by Artin conductor]{Counting algebraic tori over $\bQ$ by Artin conductor}
\author{Jungin Lee}
\date{}
\address{J. Lee -- Department of Mathematics, Ajou University, Suwon 16499, Republic of Korea}
\email{jileemath@ajou.ac.kr}
	
\begin{abstract}
In this paper we count the number $N_n^{\tor}(X)$ of $n$-dimensional algebraic tori over $\bQ$ whose Artin conductor of the associated character is bounded by $X$. This can be understood as a generalization of counting number fields of a given degree by discriminant. 
We suggest a conjecture on the asymptotics of $N_n^{\tor}(X)$ and prove that this conjecture follows from Malle's conjecture for tori over $\bQ$. We also prove that $N_2^{\tor}(X) \ll_{\ve} X^{1 + \ve}$, and this upper bound can be improved to $N_2^{\tor}(X) \ll_{\ve} X (\log X)^{1 + \ve}$ under the assumption of the Cohen--Lenstra heuristics for $p=3$.
\end{abstract}
\maketitle

\vspace{-7mm}
\section{Introduction} \label{Sec1}

\subsection{Counting number fields by discriminant} \label{Sub11}

Counting number fields by discriminant is one of the most important topics in arithmetic statistics. Let $L/K$ be an extension of number fields. (Throughout the paper, all number fields are taken to be subfields of a fixed algebraic closure of $\bQ$.) Denote the absolute value of the discriminant of $L$ by $D_L$. Also take $\delta_{L/K}$ to be the relative discriminant ideal of $L/K$, $\nm_{K/\bQ}$ to be the absolute norm and denote $\displaystyle D_{L/K} := \nm_{K/\bQ}(\delta_{L/K})$. Let $C_m$ be the cyclic group of order $m$, $D_m$ be the dihedral group of order $2m$ and $S_m$ be the symmetric group of degree $m$.

For a number field $K$ and an integer $n \geq 2$, denote by $N_{K, n}(X)$ the number of degree $n$ extensions $L$ of $K$ such that $D_{L/K} \leq X$ and denote $N_{n}(X) := N_{\bQ, n}(X)$ for simplicity. 
A folk conjecture (sometimes attributed to Linnik) states that $N_{K, n}(X) \sim c_{K, n}X$ as $X \rightarrow \infty$ for some constant $c_{K, n}>0$. This conjecture has been proved for $n \leq 5$ by the work of Davenport--Heilbronn \cite{DH71}, Datskovsky--Wright \cite{DW88}, Bhargava \cite{Bha05, Bha10} and Bhargava--Shankar--Wang \cite{BSW15}.

Now let $G \neq 1$ be a transitive subgroup of the symmetric group $S_n$. Denote by $N_{K, n}(X; G)$ the number of degree $n$ extensions $L$ of $K$ such that $D_{L/K} \leq X$ and $\Gal(L^c/K) \cong G$. Here $L^c$ denotes the Galois closure of $L/K$ and the Galois group $\Gal(L^c/K)$ is permutation-isomorphic to $G$. Denote $N_{n}(X; G) := N_{\bQ, n}(X; G)$ for simplicity. Malle's conjecture \cite{Mal02, Mal04} states that 
\begin{equation} \label{eq1a}
    N_{K, n}(X; G) \sim c_{K, G} X^{\frac{1}{a(G)}} (\log X)^{b(K, G)-1}
\end{equation}
for some positive integers $a(G), b(K,G)$ and a constant $c_{K, G}>0$. There is also Malle's weak conjecture which states that 
$X^{\frac{1}{a(G)}} \ll N_{K, n}(X; G) \ll X^{\frac{1}{a(G)}+\ve}$ for any $\ve>0$.

The numbers $a(G)$ and $b(K, G)$ are defined as follows. For any $g \in G \leq S_n$, define the index of $g$ by 
$$
\ind(g) := n - \text{the number of orbits of } g \text{ on } \left \{ 1, 2, \cdots, n \right \}
$$
and let $a(G) := \min_{g \in G \setminus \left \{ 1 \right \}} \ind(g)$. For any field $k$, denote its algebraic closure by $\overline{k}$ and its absolute Galois group by $G_k$.
Let $\chi : G_{\bQ} \to (\bZ / |G|\bZ)^{\times}$ be the mod-$|G|$ cyclotomic character, and define an action of $G_{\bQ}$ on $G$ by
$$
\sigma(g) := g^{\chi(\sigma)}
$$
for every $g \in G$ and $\sigma \in G_{\bQ}$. This induces an action of $G_{\bQ}$ on the set $\Cl(G)$ of conjugacy classes of $G$, and the action restricts to a subgroup $G_K$ of $G_{\bQ}$.

Define the $K$-conjugacy classes of $G$ to be the orbits of the action of $G_K$ on $\Cl(G)$.
Since all elements of a $K$-conjugacy class $C$ have the same index (cf. \cite[p. 130]{Mal04}), its index can be defined by the index of any element of $C$. 
The number $b(K, G)$ is defined to be the number of $K$-conjugacy classes of $G$ whose index is $a(G)$. Since $a(G)=1$ implies that $b(K, G)=1$ \cite[Lemma 2.2]{Mal04}, Malle's conjecture implies the asymptotics $N_{K, n}(X) \sim c_{K, n}X$.

Malle's conjecture for abelian extensions follows from the work of Mäki \cite{Mak85} for $K=\bQ$ and Wright \cite{Wri89} for general $K$. The cases $G=S_n$ for $3 \leq n \leq 5$ follow from the work of Davenport--Heilbronn \cite{DH71}, Datskovsky--Wright \cite{DW88}, Bhargava \cite{Bha05, Bha10} and Bhargava--Shankar--Wang \cite{BSW15}. The product of these two cases, i.e. $G=S_d \times A$ and $n=d \left | A \right |$ for $3 \leq d \leq 5$ and an abelian group $A$, was recently proved by Wang \cite{Wan21} and Masri--Thorne--Tsai--Wang \cite{MTTW20}. See also \cite{AOWW26, BW08, CDO02, FK21, Klu12, KP21, SV26} for more cases where the conjecture has been settled.

In the opposite direction, Klüners \cite{Klu05} found a counterexample $G = C_3 \wr C_2 \cong S_3 \times C_3 \leq S_6$ for Malle's conjecture. Türkelli \cite{Tur15} proposed a modified version of Malle's conjecture (with the same $a(G)$ and a different $b(K, G)$) which takes into account Klüners' counterexample and agrees with the heuristics in the function field case. Alberts \cite{Alb21} provided more evidences that Türkelli's modification is correct.

\subsection{Counting algebraic tori over \texorpdfstring{$\bQ$}{Q} by Artin conductor} \label{Sub12}

In this subsection, we explain how counting number fields can be regarded as a special case of counting (algebraic) tori over $\bQ$. The functor from the category of tori over $\bQ$ to the category of $G_{\bQ}$-lattices which maps a torus $T$ to its character group $X^*(T) := \Hom_{\overline{\bQ}} (T_{\overline{\bQ}}, \G_{m, \overline{\bQ}})$ is an anti-equivalence of categories. Let $T$ be an $n$-dimensional torus over $\bQ$ whose splitting field is $L$. The $G_{\bQ}$-action on $X^*(T)$ gives the representation
$$
\rho_T : G_{\bQ} \rightarrow \Aut (X^*(T)) \cong \GL_n(\bZ)
$$
such that $\ker (\rho_T) = G_L$ and $G_T := \im (\rho_T)$ is isomorphic to $\Gal(L/\bQ)$. Note that $\rho_T$ and $G_T$ are well-defined only up to conjugation since the isomorphism $\Aut (X^*(T)) \cong \GL_n(\bZ)$ depends on the choice of basis. Now let $C(T)$ denote the Artin conductor of the character associated to the representation 
$$
\rho : \Gal(L/\bQ) \cong G_{\bQ}/G_L \rightarrow \Aut (X^*(T)_{\bQ}) \cong \GL_n(\bQ)
$$
induced by $\rho_T$. It is always a positive integer.

Let $K$ be a number field of degree $n$, $K^c$ be the Galois closure of $K/\bQ$, $G=\Gal (K^c / \bQ)$ and $G'=\Gal(K^c / K) \leq G$.  
Consider an $n$-dimensional torus $T = \res_{K/\bQ} \G_m$ (Weil restriction of $\G_m$) over $\bQ$. The splitting field of $T$ is $K^c$ and its character group is given by 
$$
X^*(T) \cong \Ind_{G_K}^{G_{\bQ}} X^*(\G_m) = \Ind_{G_K}^{G_{\bQ}} \bZ
$$
(cf. \cite[Proposition 11.4.22]{Spr98} and \cite[12.4.5]{Spr98}).

Since $T$ splits over $K^c$, the $G_{\bQ}$-action on $X^*(T)$ factors through $G$ and $X^*(T) \cong \Ind_{G'}^G \bZ$ as $G$-modules. 
Therefore the character of the representation $\rho : G \rightarrow \Aut (X^*(T)_{\bQ}) \cong \GL_n(\bQ)$ is $\Ind_{G'}^G \mathbf{1}_{G'}$ (induced character of the trivial character $\mathbf{1}_{G'}$), whose Artin conductor is $C(\res_{K/\bQ} \G_m)=D_K$ by \cite[Corollary VII.11.8]{Neu99}. This shows that counting tori over $\bQ$ of dimension $n$ by Artin conductor can be understood as a generalization of counting number fields of degree $n$ by discriminant.

Denote by $N_n^{\tor}(X)$ the number of the isomorphism classes of tori over $\bQ$ of dimension $n$ such that $C(T) \leq X$. For a finite subgroup $H \neq 1$ of $\GL_n(\bZ)$, denote by $N_n^{\tor}(X; H)$ the number of such tori $T$ over $\bQ$ such that $G_T$ is conjugate to $H$ in $\GL_n(\bZ)$.

\subsection{Main results and the structure of the paper} \label{Sub13}

We start with some preliminaries on counting algebraic tori. First we explain how to classify tori $T$ over $\bQ$ such that $G_T$ is conjugate to given finite subgroup $H \neq 1$ of $\GL_n(\bZ)$ in Section \ref{Sub21}. We discuss the computation of the Artin conductor $C(T)$ in Section \ref{Sub22}. After that, we review some backgrounds on counting number fields in Section \ref{Sub23} and \ref{Sub25}.

In Section \ref{Sec3} we provide conjectures on the number of the isomorphism classes of tori over $\bQ$. First we suggest an asymptotics of the number $N_n^{\tor}(X)$.

\begin{conjecture} \label{conj1a}
(Conjecture \ref{conj3a}) For every $n \geq 1$, there exists a constant $c_n>0$ satisfying
\begin{equation}
N_n^{\tor}(X) \sim c_n X (\log X)^{n-1}.
\end{equation}
\end{conjecture}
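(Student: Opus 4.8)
# Proof Proposal for Conjecture \ref{conj1a}

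The plan is to reduce the asymptotic count of all $n$-dimensional tori to a sum, over conjugacy classes of finite subgroups $H \leq \GL_n(\Z)$, of the counts $N_n^{\tor}(X; H)$, and then argue that the dominant contribution comes from a single, explicitly identifiable family. First I would invoke the classification from Section \ref{Sub21}: for a fixed $H \neq 1$, tori $T$ with $G_T$ conjugate to $H$ correspond to surjections $G_{\Q} \twoheadrightarrow H$ (equivalently, to certain $H$-extensions of $\Q$), modulo the action of $N_{\GL_n(\Z)}(H)/H$, so that $N_n^{\tor}(X; H)$ is — up to a bounded multiplicative discrepancy coming from this normalizer and from kernels — comparable to a count of $H$-number-fields ordered by the Artin conductor of the associated faithful representation $H \hookrightarrow \GL_n(\Z)$. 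This is exactly a Malle-type counting problem for the group $H$ with the non-standard ordering by $C(T)$ rather than by field discriminant, and I would phrase the governing invariant: writing $a_n(H) := \min_{h \in H \setminus \{1\}} (n - \dim \ker(h-1))$ (the minimal "conductor exponent contribution", i.e. the analogue of $a(G)$ but for the given $\GL_n(\Z)$-representation) and $b_n(\Q, H)$ for the number of $\Q$-conjugacy classes of such minimal elements, Malle's conjecture for tori (as formulated in Section \ref{Sec3}) predicts $N_n^{\tor}(X; H) \asymp X^{1/a_n(H)} (\log X)^{b_n(\Q,H)-1}$.

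The next step is the key group-theoretic observation. Among all finite $H \leq \GL_n(\Z)$, the value $1/a_n(H)$ is maximized (equal to $1$) precisely when $H$ contains an element $h$ acting as $-1$ on a single coordinate and trivially on the rest — i.e. a reflection with $\dim\ker(h-1) = n-1$, giving index $1$ — and the extreme case is $H = (\Z/2\Z)^n$ acting diagonally by sign changes, corresponding to the torus $T = (\res_{\Q(\sqrt{d_i})/\Q}^{(1)} \G_m \text{ or } \G_m)$-type products, most transparently the tori $\prod_{i=1}^n T_i$ where each $T_i$ is a one-dimensional torus over $\Q$ (either $\G_m$ or the norm-one torus of a quadratic field). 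For this $H$ one has $a_n(H) = 1$ and, crucially, $b_n(\Q, H) = n$: the $n$ minimal $\Q$-conjugacy classes are the $n$ "coordinate reflections", pairwise non-conjugate in $(\Z/2\Z)^n$ and fixed by the cyclotomic action. Hence this family alone contributes $\asymp X(\log X)^{n-1}$, matching the conjectured main term. I would then show every other $H$ contributes a strictly smaller power of $\log X$ (when $a_n(H)=1$ but $b_n(\Q,H) < n$, which requires checking that no other finite subgroup of $\GL_n(\Z)$ has $n$ minimal index-one $\Q$-classes) or a strictly smaller power of $X$ (when $a_n(H) \geq 2$); summing the resulting geometric-type series over the finitely-many-per-$n$ conjugacy classes of finite subgroups of $\GL_n(\Z)$ (finiteness by Jordan–Zassenhaus) yields the total asymptotic, and the constant $c_n$ is assembled from the leading constant of the dominant family plus lower-order pieces that can matched by the abelian counting results of Mäki \cite{Mak85} and Wright \cite{Wri89}.

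A subtlety to address carefully is that $C(T)$ is the conductor of $X^*(T)_{\Q}$, so tori are genuinely counted by a representation-dependent invariant, not by the discriminant of any single field; I would make precise (via the conductor-discriminant formula and the additivity of Artin conductors under direct sums, as already used in Section \ref{Sub22}) that for the diagonal $(\Z/2\Z)^n$-family, $C(T) = \prod_{i=1}^n D_{\Q(\sqrt{d_i})}$, reducing its count to: the number of $n$-tuples of (trivial or quadratic) characters of $G_{\Q}$ with product of conductors at most $X$, which is a standard Dirichlet-series / Selberg–Delange computation giving $c_n X (\log X)^{n-1}$ with $c_n$ an explicit Euler product. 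The main obstacle is the non-trivial direction of the reduction — proving that no "exotic" finite subgroup of $\GL_n(\Z)$ (e.g. containing diagonal reflections entangled with permutation or other integral actions) produces as many as $n$ minimal $\Q$-conjugacy classes of index one, since a single overlooked family with $b_n(\Q,H) = n$ would alter $c_n$, and one with larger $b$ would break the conjecture outright; this forces a careful case analysis of which index-one elements (necessarily order-two reflections, or pseudo-reflections, in $\GL_n(\Z)$) can coexist in a finite group while staying in distinct $\Q$-conjugacy classes, and I expect this to be where most of the real work lies, though for small $n$ it is tractable by hand and the general case should follow from the structure theory of finite subgroups of $\GL_n(\Z)$ generated by reflections.
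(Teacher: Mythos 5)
First, a framing issue: the statement you were asked to address is a \emph{conjecture}. The paper does not prove it; what the paper actually proves is (a) the unconditional lower bound $N_n^{\tor}(X) \gg X(\log X)^{n-1}$ and (b) the implication ``Conjecture~\ref{conj3b} $\Rightarrow$ Conjecture~\ref{conj3a}'' (Corollary~\ref{cor3f}). Your argument invokes Malle's conjecture for tori as a premise, so what you are actually sketching is the conditional statement (b), not Conjecture~\ref{conj1a} itself; you should say this explicitly rather than leave it implicit in phrases like ``predicts''. Also note you write $N_n^{\tor}(X;H) \asymp X^{1/a(H)}(\log X)^{b(H)-1}$, but to extract the asymptotic $\sim c_n X(\log X)^{n-1}$ you need the full $\sim$ from Conjecture~\ref{conj3b}, not just two-sided bounds.

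Given that framing, your decomposition is exactly the paper's. You sum $N_n^{\tor}(X;H)$ over the finitely many conjugacy classes of nontrivial finite subgroups $H \leq \GL_n(\Z)$ (the paper cites Minkowski, you cite Jordan--Zassenhaus; both are fine), you identify the diagonal sign-change group $D_n \cong (\Z/2\Z)^n$ as the family with $a(H)=1$, $b(H)=n$, and you observe that the conclusion then hinges on showing that no $H$ with $a(H)=1$ can have $b(H) > n$. This last claim is Proposition~\ref{prop3e}, and it is the genuine mathematical content of Corollary~\ref{cor3f} — yet your proposal only \emph{identifies} it (``this forces a careful case analysis... I expect this to be where most of the real work lies'') without supplying the argument. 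Your opening observation — that an element $h$ with $\rank(h-I_n)=1$ inside a finite subgroup of $\GL_n(\Z)$ must be an order-two reflection — matches the first step of the paper's proof (via the minimal polynomial $(x-1)(x-t)$, $\det = \pm 1$, and $x-t \mid \frac{x^m-1}{x-1}$ forcing $t=-1$). But the rest of the proof, which is the hard part, is missing: one needs to take $n+1$ pairwise non-conjugate order-two reflections, use the classification of finite Coxeter groups to show that for each $x$ at most one $y$ satisfies $\ord(xy)>2$, replace the offending pairs by commuting reflections of the form $h_{2t-1}(h_{2t-1}h_{2t})^{\alpha_t}$, and then simultaneously diagonalize a commuting set of $n+1$ order-two reflections in $\GL_n(\C)$ to reach the contradiction that a diagonal subgroup can contain at most $n$ such elements. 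Your appeal to ``the structure theory of finite subgroups of $\GL_n(\Z)$ generated by reflections'' gestures in the right direction but is not a proof; until Proposition~\ref{prop3e} is established, the summation argument is not closed, since a single $H$ with $a(H)=1$ and $b(H)>n$ would make the sum dominated by a higher power of $\log X$.

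Two smaller points. First, you worry that another family with $b(H)=n$ would ``alter $c_n$'' — this is harmless, since the conjecture only asserts the existence of some $c_n>0$; only $b(H)>n$ is fatal. Second, the paper's unconditional lower bound (its Remark (b) after Conjecture~\ref{conj3a}) is proved not by a Dirichlet-series/Selberg--Delange computation on characters as you propose but by the elementary product-distribution lemma (Proposition~\ref{prop24b}) applied to $n$-tuples of quadratic fields, together with the bounded-fiber observation for the map $(L_1,\dots,L_n) \mapsto \prod T_{L_i/\Q}$; your route would also work but is heavier machinery than needed.
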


Next we provide an analogue of Malle's conjecture for tori over $\bQ$. This conjecture is not new; it is a direct consequence of a more general conjecture of Ellenberg and Venkatesh \cite[Question 4.3]{EV05}. There is a further generalization of the conjecture by Ellenberg, Satriano and Zuerick-Brown \cite[Conjecture 4.14]{ESZB21} which specializes both to Batyrev--Manin conjecture and to Malle’s conjecture.

\begin{conjecture} \label{conj1b}
(Conjecture \ref{conj3b}) For every $n \geq 1$ and a finite subgroup $1 \neq H \leq \GL_n(\bZ)$, 
\begin{equation}
N_n^{\tor}(X; H) \sim c_H X^{\frac{1}{a(H)}} (\log X)^{b(H)-1}
\end{equation}
where $a(H)$ and $b(H)$ are the positive integers defined in Section \ref{Sec3}, and $c_H>0$ is a constant depending only on $H$.
\end{conjecture}

We present three remarks on this conjecture. First, the above conjecture is a generalization of Malle's conjecture (Remark \ref{rmk3c}). Secondly, the conjecture is compatible with the direct product of $H$ (Remark \ref{rmk3d}). Finally and most importantly, if Conjecture \ref{conj1b} holds for every finite nontrivial subgroup of $\GL_n(\bZ)$, then Conjecture \ref{conj1a} is true for $n$ (Corollary \ref{cor3f}).

In Section \ref{Sec4} we concentrate on the $2$-dimensional case. Based on the results in Section \ref{Sub21} and \ref{Sub22}, we classify $2$-dimensional tori over $\bQ$ and compute their Artin conductors in Section \ref{Sub41}. Section \ref{Sub42} to \ref{Sub44} is devoted to the asymptotics of the number of $2$-dimensional tori over $\bQ$. Since Conjecture \ref{conj1b} can be easily proved when $n=2$ and $H \neq H_{12, A}$, the essential new result is the estimation of $N_2^{\tor}(X; H_{12, A})$. Here $H_{12, A}$ is a finite subgroup of $\GL_2(\bZ)$ isomorphic to the dihedral group $D_6$, which appears in Section \ref{Sub41}.

The following theorem summarizes the main results of Section \ref{Sec4}. The first part of the theorem is unconditional, and the second part is under the assumption of the Cohen--Lenstra heuristics for $p=3$. See Conjecture \ref{conj43c} for a precise version of the Cohen--Lenstra heuristics used in this paper. Throughout the paper, we write $f(X) \ll_{\ve} g(X,\ve)$ to mean that for every $\ve > 0$, there exists a constant $C_{\ve}>0$ (depending on $\ve$) such that $f(X) \le C_{\ve}g(X,\ve)$.

\begin{theorem} \label{thm1c}
\begin{enumerate}
    \item (Theorem \ref{thm44a}) 
    \begin{subequations} 
\begin{align}
X \ll N_2^{\tor}(X; H_{12, A}) & \ll_{\ve} X^{1+\frac{\log 2 + \ve}{\log \log X}} \\
X \log X \ll N_2^{\tor}(X) & \ll_{\ve} X^{1+\frac{\log 2 + \ve}{\log \log X}}. 
\end{align}
\end{subequations}
    
    \item (Theorem \ref{thm44b}) Assume that Cohen--Lenstra heuristics (Conjecture \ref{conj43c}) holds for $p=3$ and every $\alpha >0$. Then we have
\begin{equation} 
N_2^{\tor}(X; H_{12, A}) \leq N_2^{\tor}(X) 
\ll_{\ve} X (\log X)^{1 + \ve}.
\end{equation}
\end{enumerate}
\end{theorem}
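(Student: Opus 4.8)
The plan is to reduce the estimation of $N_2^{\tor}(X)$ to a sum over the finitely many conjugacy classes of finite subgroups $H \leq \GL_2(\Z)$ of the quantities $N_2^{\tor}(X; H)$, and then bound each term separately. For every $H \neq H_{12,A}$ one has $a(H) \geq 1$ and, after unwinding the classification of $2$-dimensional tori in Section \ref{Sub41} together with the conductor formula in Section \ref{Sub22}, the count $N_2^{\tor}(X; H)$ is governed either by counting quadratic (or biquadratic, cyclic quartic, \emph{etc.}) fields by a suitable conductor — which is $O(X)$ or $O(X \log X)$ by classical results — or by counting pairs consisting of such a field together with a character or a class-group datum, again giving a bound that is $O(X (\log X)^{c})$ with $c$ small. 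So the whole problem concentrates on $H = H_{12,A} \cong D_6$, and the goal is to show $N_2^{\tor}(X; H_{12,A}) \ll_{\varepsilon} X(\log X)^{1+\varepsilon}$ under the Cohen-Lenstra heuristics for $p=3$.

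For the $H_{12,A}$ term I would first use the description of tori with $G_T$ conjugate to $H_{12,A}$ from Section \ref{Sub21}: such a torus corresponds to a continuous surjection $G_{\Q} \twoheadrightarrow D_6$ (up to the relevant equivalence), equivalently to an $S_3$- or $C_6$-type field together with a compatible quadratic datum, and the conductor $C(T)$ factors — via the Artin conductor computation of Section \ref{Sub22} — into a part coming from a cubic (or sextic) field and a part coming from a quadratic twist. Parametrizing by the cubic field $F$ inside the relevant $S_3$-extension, the number of admissible tori with conductor $\leq X$ attached to a fixed $F$ is controlled by the number of quadratic characters of bounded conductor, \emph{but also} by the $3$-torsion of the class group of $F$ (this is where $D_6$ genuinely differs from a generic degree-$6$ group and where Klüners-type phenomena enter): one gets roughly a bound of the shape $\sum_{F} \#\mathrm{Cl}(F)[3] \cdot (\text{conductor budget for the quadratic part})$, where $F$ ranges over cubic fields with discriminant in a dyadic range forced by $X$.

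The main obstacle — and the only place the Cohen-Lenstra hypothesis is needed — is controlling the average of $\#\mathrm{Cl}(F)[3]$ over the family of cubic fields $F$ with $D_F$ in a dyadic window. Unconditionally one only has the trivial-per-field bound $\#\mathrm{Cl}(F)[3] \ll_{\varepsilon} D_F^{1/3+\varepsilon}$ (or the Ellenberg-Venkatesh-type improvement), which is what produces the weaker bound $X^{1 + (\log 2 + \varepsilon)/\log\log X}$ in part (1). Assuming the Cohen-Lenstra heuristics for $p=3$ (in the averaged form of Conjecture \ref{conj43c}, over all $\alpha > 0$), the average of $\#\mathrm{Cl}(F)[3]$ over cubic $F$ with $D_F \leq Y$ is $O_{\varepsilon}(Y^{\varepsilon})$ — indeed bounded, up to the number of fields — so the sum over dyadic windows telescopes to $X(\log X)^{1+\varepsilon}$, the extra $\log X$ absorbing the dyadic decomposition and the count of quadratic twists. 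I would carry this out as: (i) set up the exact parametrization and conductor factorization for $H_{12,A}$; (ii) isolate the inner sum over quadratic data and bound it by a constant times the relevant conductor budget; (iii) reduce the outer sum to $\sum_{D_F \leq Y} \#\mathrm{Cl}(F)[3]$ over dyadic $Y \ll X$; (iv) invoke Conjecture \ref{conj43c} to bound this by $O_\varepsilon(X^\varepsilon \cdot \#\{F : D_F \leq Y\})$; (v) combine, using the Davenport-Heilbronn count for cubic fields, and sum the dyadic contributions. Finally, part (2) follows by combining the $H_{12,A}$ estimate with the $O(X(\log X)^c)$ bounds for all other $H$, noting that the $H_{12,A}$ term dominates.
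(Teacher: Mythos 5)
Your high-level reduction is sound: the classification in Section~\ref{Sub41} concentrates the difficulty on $H_{12,A}\cong D_6$, realized via sextic fields $L=FK$ with $F\in\NF_3(S_3)$ and $K\in\NF_2$, and the conductor of the torus is $D_L/(D_FD_K)$. This is indeed the paper's starting point. However, from step (ii) onward your plan diverges in ways that would not yield the stated bounds, because you have misidentified both where the unconditional loss $X^{(\log 2+\varepsilon)/\log\log X}$ comes from and what the Cohen--Lenstra input is used for.

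First, the class number that enters is $h_3(E)$ for $E$ a \emph{quadratic} field, namely the quadratic resolvent of $F$: by Cohen--Thorne, the number of $S_3$-cubics $F$ with $D_F=D_E f^2$ for fixed $(E,f)$ is $O(h_3(E)\cdot 2^{w(f)})$. You instead invoke $\#\Cl(F)[3]$ for the \emph{cubic} $F$, which does not appear in the argument. Correspondingly, Conjecture~\ref{conj43c} is a statement about moments of $h_p(K)$ over quadratic $K$, not over cubic fields. Second, and more importantly, the unconditional exponent $\log 2/\log\log X$ has nothing to do with any pointwise bound such as $h_3\ll D^{1/3+\varepsilon}$; an exponent of that shape would be of order $1/3$, not $o(1)$. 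The bound $\sum_{D_E\leq X} h_3(E)\ll X$ is already known unconditionally by Davenport--Heilbronn (the $\alpha=1$ case of Conjecture~\ref{conj43c}), so the per-quadratic-field class number average is not the bottleneck. What actually produces the loss is the divisor factor $\tau(D_E)$ arising from summing over the squarefree integers $m\mid\gcd(D_F,D_K)$ in Proposition~\ref{prop42b}: the key function to control is $g(X)=\sum_{D_E<X}h_3(E)\tau(D_E)$, and unconditionally one can only pull out $\max_{n<X}\tau(n)\ll X^{(\log 2+\varepsilon)/\log\log X}$ (Wigert) before applying Davenport--Heilbronn.

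Consequently, your step (iv) — ``invoke Conjecture~\ref{conj43c} to bound $\sum_F\#\Cl(F)[3]$ by $O_\varepsilon(X^\varepsilon\cdot\#\{F\})$'' — would not improve matters, since that average is bounded unconditionally and the resulting estimate would simply recover the trivial $\tau$-loss. The actual role of the Cohen--Lenstra input (all moments $\alpha\geq 1$) is to control the \emph{correlation} between $h_3(E)$ and $\tau(D_E)$: one stratifies quadratic fields $E$ by the $3$-rank $r$ and the number of prime divisors $k$ of $D_E$, uses the Hardy--Ramanujan bound for the $k$-stratum, the conjectural $m$-th moment $\sum_{D_E\leq X}h_3(E)^m\ll_m X$ for the $r$-stratum, and splits according to whether $2^{k-1}<3^{r(m-1)}$ or not to decide which estimate wins, giving $g(X)\ll_m X(\log X)^{2^{m/(m-1)}-1}$ and hence $X(\log X)^{1+\varepsilon}$ as $m\to\infty$. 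Without this stratified argument against the divisor weight, the conditional improvement to $X(\log X)^{1+\varepsilon}$ does not follow from boundedness of the $h_3$-average alone, and your proposal as written does not reach the stated conclusion.
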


Here we briefly explain why the estimation of the asymptotics of $N_2^{\tor}(X; H_{12, A})$ is difficult compared to the case $N_2^{\tor}(X; H_{8, A})$. 
The asymptotics of $N_2^{\tor}(X; H_{8, A})$ follows from the work of Altuğ, Shankar, Varma and Wilson \cite{ASVW21}, where both analytic techniques and geometry-of-numbers methods were used. In particular, the parametrization of $D_4$-quartic fields via certain pairs of ternary quadratic forms following Bhargava \cite{Bha05} and Wood \cite{Woo09} was essential. However, such a parametrization is not yet known for $D_6$-sextic fields. In the sequel of the paper, we estimate $N_2^{\tor}(X; H_{12, A})$ by understanding a $D_6$-sextic field as a compositum of an $S_3$-cubic field and a quadratic field.

\section{Preliminaries} \label{Sec2}

\subsection{Classification of tori over \texorpdfstring{$\bQ$}{Q}} \label{Sub21}

For an $n$-dimensional torus $T$ over $\bQ$, we have defined a finite subgroup $G_T \leq \GL_n(\bZ)$ (well-defined up to conjugation) which is isomorphic to the Galois group of the splitting field of $T$. 
In this section, we explain how to compute $n$-dimensional tori $T$ over $\bQ$ such that $G_T$ is conjugate to the given finite subgroup $H$ of $\GL_n(\bZ)$. We explain the general method and provide an explicit computation for one example. A complete classification of $2$-dimensional tori over $\bQ$ will be provided in Section \ref{Sec4}. 

Recall that the functor $T \mapsto X^*(T)$ from the category of tori over $\bQ$ to the category of $G_{\bQ}$-lattices is an anti-equivalence of categories. Its inverse functor maps a $G_{\bQ}$-lattice $M$ to a torus $T_M$ defined by $T_M(R) := \Hom_{G_{\bQ}}(M, R_{\overline{\bQ}}^{\times})$
for any $\bQ$-algebra $R$. 
Suppose that an $n$-dimensional torus $T$ over $\bQ$ corresponds to a $G_{\bQ}$-lattice $M$ of rank $n$. If the splitting field of $T$ is $L$, then we have $\ker(G_{\bQ} \rightarrow \Aut(M)) = G_L$ and
$$
T(R) \cong \Hom_{\Gal(L/\bQ)}(M, R_{L}^{\times}).
$$

Fix a $\bZ$-basis $x_1, \cdots, x_n$ of $M$. An element $\phi \in \Hom_{\Gal(L/\bQ)}(M, R_{L}^{\times})$ is determined by $n$ values 
$$
v_1 := \phi(x_1), \cdots, v_n := \phi(x_n) \in R_{L}^{\times}=(\res_{L/\bQ} \G_m)(R).
$$
For each $g \in \Gal(L/\bQ)$ and $1 \le j \le n$, we have
\begin{equation} \label{eq21a}
g \cdot v_j 
= \phi(g \cdot x_j)
=\phi(\sum_{i=1}^{n} A_{ij}x_i)
= \prod_{i=1}^{n} v_i^{A_{ij}}
\end{equation}
where $A \in G_T$ is the element corresponding to $g \in \Gal(L/\bQ)$ via the isomorphism
$$
\Gal(L/\bQ) \cong G_{\bQ}/G_L = G_{\bQ}/\ker(\rho_T) \cong \im(\rho_T) = G_T.
$$
Note that it is enough to check the relations \eqref{eq21a} for generators of $\Gal(L/\bQ)$. Now we provide an example to illustrate this.

\begin{example} \label{ex21a}
Let $H$ be the finite subgroup of $\GL_2(\bZ)$ generated by two elements $g = \bigl(\begin{smallmatrix}
1 & -1\\ 
1 & 0
\end{smallmatrix}\bigr)$ and $h = \bigl(\begin{smallmatrix}
0 & 1\\ 
1 & 0
\end{smallmatrix}\bigr)$. It is same as the group $H_{12, A}$ in Section \ref{Sec4}. By the relations $g^6=h^2=(gh)^2=1$, $H$ is isomorphic to the dihedral group $D_6$ of order $12$. Let $T$ be a $2$-dimensional torus over $\bQ$ such that $G_T$ is conjugate to $H$ in $\GL_2(\bZ)$ and $L$ be its splitting field. We identify $\Gal(L/\bQ)$ with $H$ so denote the elements of $\Gal(L/\bQ)$ by $g$, $h$, etc. For an extension $E/K$ of number fields, define a torus $T_{E/K}$ over $\bQ$ as in Section \ref{Sub22}.

For a subgroup $A$ of $\Gal(L/\bQ)$, denote the fixed field of $A$ by $L^A$. Let $L_6 = L^{\left \langle g^4h \right \rangle}$, $L_3=L^{\left \langle g^3, gh \right \rangle}$ and $L_2=L^{\left \langle g^2, h \right \rangle}$. By the equation \eqref{eq21a}, $T$ is determined by $v_1, v_2 \in \res_{L/\bQ} \G_m$ such that
$$
g v_1= v_1^{g_{11}}v_2^{g_{21}} = v_1v_2, \;
g v_2= v_1^{g_{12}}v_2^{g_{22}} = v_1^{-1}
$$
and
$$
h v_1 = v_1^{h_{11}}v_2^{h_{21}} = v_2, \;
h v_2= v_1^{h_{12}}v_2^{h_{22}} = v_1.
$$
A simple computation shows that
\begin{align*} 
T = & \left \{ (v_1, v_2) \in (\res_{L/\bQ} \G_m)^2 : gv_1 = v_1v_2, \, gv_2=v_1^{-1}, \, hv_1=v_2 \text{ and } hv_2=v_1 \right \} \\
\cong & \left \{ v_1 \in \res_{L/\bQ} \G_m : gv_1 = v_1 \cdot hv_1 \text{ and } gh v_1 \cdot v_1 = 1 \right \} \\
= & \left \{ v_1 \in \res_{L/\bQ} \G_m : g^2v_1 \cdot v_1 = gv_1  \text{ and } gh v_1 \cdot v_1 = 1 \right \}  \\
= & \left \{ v_1 \in \res_{L/\bQ} \G_m : v_1 \cdot g^3v_1 = 1 \text{ and } v_1 \cdot g^2v_1 \cdot g^4v_1 = 1 \text{ and } gh v_1 \cdot v_1 = 1  \right \}  \\
= & \left \{ v_1 \in \res_{L/\bQ} \G_m : v_1 \cdot g^3v_1 = 1 \text{ and } v_1 \cdot g^2v_1 \cdot g^4v_1 = 1 \text{ and } g^4h v_1 =v_1  \right \}  \\
= & \left \{ v_1 \in \res_{L_6/\bQ} \G_m : v_1 \cdot g^3v_1 = 1 \text{ and } v_1 \cdot g^2v_1 \cdot g^4v_1 = 1 \right \}  \\
= & \left \{ v_1 \in \res_{L_6/\bQ} \G_m : \nm_{L_6/L_3}(v_1)=1 \text{ and } \nm_{L_6/L_2}(v_1)=1 \right \} \\
= & \,\, T_{L_6/L_3} \cap T_{L_6/L_2}.
\end{align*}
We provide more details on the third equality, as it is not trivial. Assume that $gv_1 = v_1 \cdot g^2v_1$. Then $g^2v_1 \cdot gv_1 = (gv_1 \cdot g^3v_1) \cdot (v_1 \cdot g^2v_1)$ so $v_1 \cdot g^3v_1=1$, and thus $v_1 \cdot g^2v_1 \cdot g^4v_1 = gv_1 \cdot g^4v_1 =g(v_1 \cdot g^3v_1) = 1$. Conversely, assume that $v_1 \cdot g^3v_1=1$ and $v_1 \cdot g^2v_1 \cdot g^4v_1=1$. Then $g^3v_1 = g^2v_1 \cdot g^4v_1$ so $gv_1 = v_1 \cdot g^2v_1$. This proves that $gv_1 = v_1 \cdot g^2v_1$ if and only if $v_1 \cdot g^3v_1=1$ and $v_1 \cdot g^2v_1 \cdot g^4v_1=1$, thereby showing that the fourth equality holds.
\end{example}

\subsection{Computation of \texorpdfstring{$C(T)$}{C(T)}} \label{Sub22}

After the classification of tori over $\bQ$, we need to compute the Artin conductor $C(T)$ for each torus $T$ over $\bQ$. This can be done by combining the following two basic facts. 
  
\begin{proposition} \label{prop22a}
\begin{enumerate}
    \item For a number field $K$, $C(\res_{K/\bQ} \G_m) = D_K$. 
    
    \item If $1 \rightarrow T_1 \rightarrow T_2 \rightarrow T_3 \rightarrow 1$ is an exact sequence of tori over $\bQ$, then $C(T_2)=C(T_1)C(T_3)$. 
\end{enumerate}
\end{proposition}

\begin{proof}
\begin{enumerate}
    \item See Section \ref{Sub12}. 

    \item The exact sequence gives an isogeny $T_2 \sim T_1 \times T_3$ so 
    $$
    X^*(T_2)_{\bQ} \cong X^*(T_1 \times T_3)_{\bQ} \cong X^*(T_1)_{\bQ} \times X^*(T_3)_{\bQ},
    $$
    which implies that $C(T_2)=C(T_1)C(T_3)$. \qedhere
\end{enumerate}
\end{proof}

We introduce some notations for tori over $\bQ$. For an extension $L/K$ of number fields, 
$$
\res_{L/K}^{(1)}\G_m := \ker(\res_{L/K} \G_m \xrightarrow{\nm_{L/K}} \G_m)
$$
is a torus over $K$ of dimension $[L:K]-1$ and
$$
T_{L/K} := \res_{K/\bQ}(\res_{L/K}^{(1)}\G_m).
$$
is a torus over $\bQ$ of dimension $[L:\bQ]-[K:\bQ]$. Note that $T_{L/\bQ} = \res_{L/\bQ}^{(1)}\G_m$ is the norm-one torus. Taking $\res_{K/\bQ}$ on the exact sequence 
$$
1 \rightarrow \res_{L/K}^{(1)}\G_m \rightarrow \res_{L/K} \G_m
\xrightarrow{\nm_{L/K}} \G_m \rightarrow 1
$$
of tori over $K$, we obtain an exact sequence
$$
1 \rightarrow T_{L/K} \rightarrow \res_{L/\bQ}\G_m 
\xrightarrow{\res_{K/\bQ}(\nm_{L/K})} \res_{K/\bQ}\G_m \rightarrow 1
$$
of tori over $\bQ$ (cf. \cite[Section 3.12]{Vos98}). Proposition \ref{prop22a} implies that
\begin{equation} \label{eq22a}
    C(T_{L/K}) = \frac{D_L}{D_K}.
\end{equation}

Now let $K_1$ and $K_2$ be linearly disjoint number fields and $L=K_1K_2$. Recall that $K_1$ and $K_2$ are linearly disjoint if and only if $[L:\bQ]=[K_1:\bQ][K_2:\bQ]$. The following lemma enables us to compute the number $C(T)$ for a torus $T = T_{L/K_1} \cap T_{L/K_2}$.

\begin{lemma} \label{lem22b}
Let $K_1$, $K_2$ and $L$ as above. Then $T_{L/K_1} \cap T_{L/K_2}$ is a torus over $\bQ$ and the sequence
\begin{equation} \label{eq22b}
1 \rightarrow T_{L/K_1} \cap T_{L/K_2}
\rightarrow T_{L/K_1}
\xrightarrow{\res_{K_2/\bQ}(\nm_{L/K_2})} T_{K_2/\bQ}
\rightarrow 1
\end{equation}
is exact. 
\end{lemma}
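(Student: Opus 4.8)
The plan is to translate everything into the language of character lattices via the anti-equivalence between groups of multiplicative type over $\Q$ and finitely generated $G_{\Q}$-modules, under which a sequence of such groups is exact exactly when the corresponding sequence of character modules is exact in the opposite direction, and a group is a torus exactly when its character module is torsion-free. With the linear disjointness hypothesis in hand, everything reduces to an elementary statement about $\Z$-valued functions on a product of two finite sets.

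First I would fix the common ambient group. By the exact sequence $1 \to T_{L/K} \to \res_{L/\Q}\G_m \xrightarrow{\res_{K/\Q}(\nm_{L/K})} \res_{K/\Q}\G_m \to 1$ of the preceding paragraph (applied with $K = K_i$), $T_{L/K_i}$ is the kernel of $\res_{K_i/\Q}(\nm_{L/K_i}) : \res_{L/\Q}\G_m \to \res_{K_i/\Q}\G_m$; hence $T_{L/K_1}$ and $T_{L/K_2}$ are closed subgroups of $\res_{L/\Q}\G_m$ and $T_{L/K_1} \cap T_{L/K_2}$ is the scheme-theoretic kernel of $\res_{L/\Q}\G_m \to \res_{K_1/\Q}\G_m \times \res_{K_2/\Q}\G_m$, a priori only a group of multiplicative type. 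Transitivity of the norm, $\nm_{K_2/\Q} \circ \nm_{L/K_2} = \nm_{L/\Q} = \nm_{K_1/\Q} \circ \nm_{L/K_1}$, shows that $\res_{K_2/\Q}(\nm_{L/K_2})$ carries $T_{L/K_1}$ into $T_{K_2/\Q}$ with scheme-theoretic kernel exactly $T_{L/K_1} \cap T_{L/K_2}$; so the content of the lemma is that (a) $T_{L/K_1} \cap T_{L/K_2}$ is connected and (b) the induced map $T_{L/K_1} \to T_{K_2/\Q}$ is surjective.

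For the dictionary, set $X_i := \Hom_{\Q}(K_i, \overline{\Q})$ and $S := \Hom_{\Q}(L, \overline{\Q})$, so that $X^*(\res_{K_i/\Q}\G_m) = \Z[X_i]$ and $X^*(\res_{L/\Q}\G_m) = \Z[S]$ as $G_{\Q}$-permutation modules, with $\nm_{L/K_i}^*$ becoming pullback along restriction $S \to X_i$. The hypothesis that $K_1, K_2$ are linearly disjoint is exactly that restriction $S \to X_1 \times X_2$ is a bijection (injectivity since $L$ is generated by $K_1 \cup K_2$, and then bijectivity by comparing cardinalities), so I may identify $\Z[S]$ with the $G_{\Q}$-module of $\Z$-valued functions on $X_1 \times X_2$; under this identification $\im\nm_{L/K_i}^*$ is the submodule of functions depending only on the $i$-th coordinate. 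Consequently $X^*(T_{L/K_1}) = \Z[S]/\im\nm_{L/K_1}^*$, $X^*(T_{L/K_1}\cap T_{L/K_2}) = \Z[S]/\bigl(\im\nm_{L/K_1}^* + \im\nm_{L/K_2}^*\bigr)$, and $X^*(T_{K_2/\Q}) = \Z[X_2]/\Z \cdot \mathbf{1}_{X_2}$.

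The lemma then follows from two elementary facts about $M := \im\nm_{L/K_1}^* + \im\nm_{L/K_2}^*$, the module of functions on $X_1 \times X_2$ of the form $g_1(\sigma) + g_2(\tau)$. First, $\im\nm_{L/K_1}^* \cap \im\nm_{L/K_2}^*$ consists of the functions depending on neither coordinate, i.e. equals $\Z\cdot\mathbf{1}_S = \nm_{L/K_2}^*(\Z\cdot\mathbf{1}_{X_2})$; this produces a short exact sequence $0 \to X^*(T_{K_2/\Q}) \to X^*(T_{L/K_1}) \to X^*(T_{L/K_1}\cap T_{L/K_2}) \to 0$, with the maps induced by $\nm_{L/K_2}^*$ and the identity on $\Z[S]$, hence (by the anti-equivalence) exactness of \eqref{eq22b}. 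Second, $M$ is saturated in $\Z[S]$: if $mf \in M$ for some integer $m \geq 1$ and some integer-valued $f$, then, fixing $\sigma_0 \in X_1$ and $\tau_0 \in X_2$, the integer-valued functions $h_1(\sigma) := f(\sigma,\tau_0)$ and $h_2(\tau) := f(\sigma_0,\tau) - f(\sigma_0,\tau_0)$ satisfy $h_1(\sigma) + h_2(\tau) = f(\sigma,\tau)$, so $f \in M$; therefore $X^*(T_{L/K_1}\cap T_{L/K_2}) = \Z[S]/M$ is torsion-free and $T_{L/K_1}\cap T_{L/K_2}$ is a torus. Neither fact is hard, and I expect the only genuine care to be required in verifying that the three identifications of character modules are $G_{\Q}$-equivariant and compatible with $\res_{K_2/\Q}(\nm_{L/K_2})$ — bookkeeping rather than a real obstacle.
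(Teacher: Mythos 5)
Your proof is correct and takes essentially the same route as the paper's: pass to character lattices via the anti-equivalence, use linear disjointness to realize $\Z[S]$ as $\Z$-valued functions on $X_1 \times X_2$, and reduce the claim to injectivity plus torsion-free cokernel of the dual map $\beta$. The only cosmetic difference is how that last step is carried out: the paper identifies $\beta$ explicitly as the diagonal $A \to A^{n_1}$ for $A = \Z^{n_2}/(1,\ldots,1)\Z$ (whence the cokernel is visibly free), whereas you verify directly that the sublattice $M$ of functions of the form $g_1(\sigma)+g_2(\tau)$ is saturated via the basepoint-splitting $f(\sigma,\tau) = f(\sigma,\tau_0) + \bigl(f(\sigma_0,\tau) - f(\sigma_0,\tau_0)\bigr)$ — the same computation, packaged a bit more cleanly.
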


\begin{proof}
Denote $\alpha := \res_{K_2/\bQ}(\nm_{L/K_2})$. 
We have 
$$
\nm_{K_2/\bQ}(\nm_{L/K_2}(x))=\nm_{K_1/\bQ}(\nm_{L/K_1}(x))=1
$$
for $x \in T_{L/K_1}$ so the map $\alpha$ is well-defined. 
Also it is trivial that $\ker(\alpha)=T_{L/K_1} \cap T_{L/K_2}$. From the equivalence of categories between the category of tori over $\bQ$ and the category of $G_{\bQ}$-lattices, it is enough to show that
$$
\beta := X^*(\alpha) : X^*(T_{K_2/\bQ}) \rightarrow X^*(T_{L/K_1})
$$
is injective and $\cok(\beta)$ is torsion-free.

Let $\sigma_1, \cdots, \sigma_{n_1}$ be the embeddings of $K_1$ into $\bC$ and $\sigma'_1, \cdots, \sigma'_{n_2}$ be the embeddings of $K_2$ into $\bC$. Since $K_1$ and $K_2$ are linearly disjoint, the embeddings of $L$ into $\bC$ are $\tau_{ij}$ ($1 \leq i \leq n_1$, $1 \leq j \leq n_2$) such that $\tau_{ij} \mid_{K_1} = \sigma_i$ and $\tau_{ij} \mid_{K_2} = \sigma'_j$. By the isomorphism
$$
(\res_{K_2/\bQ} \G_m)_{\overline{\bQ}} 
\cong \prod_{\sigma' : K_2 \hookrightarrow \overline{\bQ}} \G_{m, \overline{\bQ}}^{\sigma'}
\cong \prod_{j=1}^{n_2} \G_{m, \overline{\bQ}}^{\sigma'_{j}}.
$$
($\G_{m, \overline{\bQ}}^{\sigma'}$ are copies of $\G_{m, \overline{\bQ}}$ indexed by $\sigma' : K_2 \hookrightarrow \overline{\bQ}$), the norm map $(\nm_{K_2/\bQ})_{\overline{\bQ}} : (\res_{K_2/\bQ} \G_m)_{\overline{\bQ}} \to \G_{m_,\overline{\bQ}}$ is given by
$$
\prod_{j=1}^{n_2} \G_{m, \overline{\bQ}}^{\sigma'_{j}} \to \G_{m, \overline{\bQ}} \;\; ((x_{\sigma'_j})_{j} \mapsto \prod_{j=1}^{n_2} x_{\sigma'_j}).
$$
Similarly, the norm map $(\nm_{L/K_1})_{\overline{\bQ}} : (\res_{L/\bQ} \G_m)_{\overline{\bQ}} \to (\res_{K_1/
\bQ} \G_m)_{\overline{\bQ}}$ is given by
$$
\prod_{i=1}^{n_1}\prod_{j=1}^{n_2} \G_{m, \overline{\bQ}}^{\tau_{ij}} \to \prod_{i=1}^{n_1} \G_{m, \overline{\bQ}}^{\sigma_{i}} \;\; ((x_{\tau_{ij}})_{i,j} \mapsto ( \prod_{j=1}^{n_2}x_{\tau_{ij}})_{i} ).
$$
These maps imply the isomorphisms
$$
X^*(T_{K_2/\bQ}) 
\cong \cok(X^*(\G_m) \rightarrow X^*(\res_{K_2/\bQ} \G_m))
\cong \cok(\bZ \xrightarrow{x \mapsto (x, \cdots, x)} \prod_{j=1}^{n_2} \bZ_{j})
$$
and
\begin{equation*}
X^*(T_{L/K_1}) 
\cong \cok(X^*(\res_{K_1/\bQ} \G_m) \rightarrow X^*(\res_{L/\bQ} \G_m))
\cong \prod_{i=1}^{n_1} \cok(\bZ_{i} \xrightarrow{x_i \mapsto (x_i, \cdots, x_i)} \prod_{j=1}^{n_2} \bZ_{ij})   
\end{equation*}
as $\bZ$-modules. Here $\bZ_i$, $\bZ_j$, $\bZ_{ij}$ are isomorphic to $\bZ$ and the subscripts are indices. Since the diagram
\[ 
\begin{tikzcd}
T_{L/K_1} \arrow[d, "\nm_{L/K_2}"] \arrow[r] & \res_{L/\bQ} \G_m \arrow[d, "\nm_{L/K_2}"] \\
T_{K_2/\bQ} \arrow[r] & \res_{K_2/\bQ} \G_m
\end{tikzcd}
\]
commutes, the map $\beta$ is induced by the map
$$
\prod_{j=1}^{n_2} (\bZ_j \xrightarrow{x_j \mapsto (x_j, \cdots, x_j)} \prod_{i=1}^{n_1}\bZ_{ij}).
$$
For $A := \bZ^{n_2}/(1, \cdots, 1)\bZ \cong \bZ^{n_2-1}$, the map $\beta$ is given by
$$
\beta : A \rightarrow A^{n_1} \,\, (a \mapsto (a, \cdots, a)).
$$
It is obviously injective and has a torsion-free cokernel. 
\end{proof}

Now the exact sequence \eqref{eq22b} gives
\begin{equation*} 
\begin{split}
\dim (T_{L/K_1} \cap T_{L/K_2}) 
& = \dim T_{L/K_1} - \dim T_{K_2/\bQ} \\
& = ([L:\bQ]-[K_1:\bQ])-([K_2:\bQ]-1) \\
& = ([K_1:\bQ]-1)([K_2:\bQ]-1).
\end{split}
\end{equation*}
Also the equation \eqref{eq22a} and Lemma \ref{lem22b} imply that
\begin{equation} \label{eq22c}
C(T_{L/K_1} \cap T_{L/K_2}) 
= \frac{C(T_{L/K_1})}{C(T_{K_2/\bQ})}
= \frac{D_L}{D_{K_1}D_{K_2}}.
\end{equation}

\subsection{Counting number fields by conductor} \label{Sub23}

In this section, we summarize the known results on counting number fields by conductor. For an integer $n \geq 2$ and a subgroup $G \leq S_n$, denote by $\NF_n(G)$ the set of the isomorphism classes of degree $n$ number fields whose Galois closure has a Galois group permutation-isomorphic to $G$. Denote $\NF_2(C_2)$ by $\NF_2$. (Recall that $C_m$ denotes the cyclic group of order $m$.) For an abelian number field $M$, denote its conductor by $\cond(M)$. First we provide an asymptotics of the number of abelian number fields ordered by conductor.

\begin{proposition} \label{prop23b}
(\cite[Theorems 4 and 5]{Mak93}) For a finite abelian group $G$, the number $N^{\text{con}}(X; G)$ of abelian number fields $L$ such that $\Gal(L/\bQ) \cong G$ and the conductor of $L$ is at most $X$ is given by
$$
N^{\text{con}}(X; G) \sim c_G X(\log X)^{d(G)}
$$
for some constant $c_G>0$ and a nonnegative integer $d(G)$.
\end{proposition}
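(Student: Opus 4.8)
The plan is the classical one for problems of this shape (Wright, M\"aki): parametrize the fields by class field theory, pass to a Dirichlet series, and apply a Tauberian theorem. By the Kronecker--Weber theorem, giving an abelian field $L/\Q$ with $\Gal(L/\Q)\cong G$ is the same as giving a continuous surjection $\phi\colon\widehat{\Z}^{\times}=\prod_p\Z_p^{\times}\twoheadrightarrow G$, taken up to post-composition by $\Aut(G)$, and the conductor is $\cond(L)=\prod_p p^{c_p(\phi)}$, where $p^{c_p(\phi)}$ is the conductor of the local component $\phi_p:=\phi|_{\Z_p^{\times}}$ (the least $p^c$ with $\phi_p$ trivial on $1+p^c\Z_p$), equal to $1$ for all but finitely many $p$. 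Since $\cond$ depends only on $\ker\phi$ and $\Aut(G)$ acts freely on surjections,
\begin{equation*}
\sum_{\Gal(L/\Q)\cong G}\cond(L)^{-s}\;=\;\frac{1}{|\Aut(G)|}\sum_{\phi\colon\widehat{\Z}^{\times}\twoheadrightarrow G}\cond(\phi)^{-s};
\end{equation*}
by M\"obius inversion over the subgroup lattice of $G$ this reduces us to understanding, for each subgroup $H\le G$, the series $F_H(s):=\sum_{\phi\colon\widehat{\Z}^{\times}\to H}\cond(\phi)^{-s}$ over \emph{all} homomorphisms into $H$. (Over a general base number field one replaces $\widehat{\Z}^{\times}$ by the relevant quotient of the id\`ele class group; that is Wright's setting.)

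Each $F_H$ is an Euler product $\prod_p L_{H,p}(s)$ with $L_{H,p}(s)=\sum_{\psi\colon\Z_p^{\times}\to H}p^{-s\,c_p(\psi)}$. For $p\nmid|H|$ the pro-$p$ group $1+p\Z_p$ maps trivially, so $\psi$ factors through the tame quotient $\Z_p^{\times}/(1+p\Z_p)\cong\Z/(p-1)$ and $c_p(\psi)\in\{0,1\}$; hence $L_{H,p}(s)=1+(|H[p-1]|-1)p^{-s}$, where $H[m]:=\{h\in H:h^m=1\}$. For the finitely many $p\mid|H|$, $L_{H,p}(s)$ is a fixed Dirichlet polynomial in $p^{-s}$, holomorphic and non-vanishing for $\re s>0$, and so irrelevant near $s=1$. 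Writing $H\cong\prod_i\Z/n_i\Z$, so $|H[p-1]|=\prod_i\gcd(p-1,n_i)$, expanding $\gcd(p-1,n_i)=\sum_{d_i\mid n_i}\varphi(d_i)\mathbf{1}[d_i\mid p-1]$, and using $\sum_{p\equiv 1\,(m)}p^{-s}=\varphi(m)^{-1}\log\tfrac1{s-1}+O(1)$ as $s\to1^{+}$ (from $\log\zeta(s)$ and the non-vanishing of $L(1,\chi)$), one gets
\begin{equation*}
\log F_H(s)=(A(H)-1)\log\tfrac1{s-1}+O(1),\qquad A(H):=\sum_{d_i\mid n_i}\frac{\prod_i\varphi(d_i)}{\varphi(\lcm_i d_i)}.
\end{equation*}
Since $|H[p-1]|\le|G[p-1]|$, with strict inequality whenever $p\equiv 1\pmod{\exp(G)}$, one has $A(H)<A(G)$ for every proper $H<G$, so the $H=G$ term dominates the M\"obius sum: $F_G(s)$ is holomorphic and non-vanishing for $\re s>1$, has abscissa of convergence $1$, and near $s=1$ has the form $(s-1)^{-(A(G)-1)}g(s)$ with $g$ holomorphic and non-vanishing there.

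A Tauberian theorem of Delange type --- a Dirichlet series with non-negative coefficients whose continuation past $\re s=1$ has a pole of order $m$ at $s=1$ has partial sums $\sim cX(\log X)^{m-1}$ --- then gives $N^{\text{con}}(X;G)\sim c_G X(\log X)^{A(G)-2}$, i.e.\ $d(G)=A(G)-2$. For $G$ cyclic of order $n=\prod_i p_i^{e_i}$ the product in $A(G)$ has a single factor, so $A(G)=\sum_{d\mid n}\varphi(d)/\varphi(d)=\tau(n)=\prod_i(e_i+1)$, whence $d(G)=\prod_i(e_i+1)-2$; in particular $d(C_4)=1$ and $d(C_6)=2$.

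The algebra above is elementary; the real work --- and the main obstacle --- is the analytic input needed to license the Tauberian step: one must continue $F_G(s)$ a little to the left of $\re s=1$ (equivalently, obtain power-saving error terms in the prime sums, uniform over the moduli $m=\lcm_i d_i$ that occur) and rule out boundary zeros or poles of the Dirichlet $L$-functions appearing in $\log F_G$. This is precisely where the Selberg--Delange method, Perron's formula, and classical zero-free regions enter, and it is the content of M\"aki's (and Wright's) theorems. A secondary point requiring care is the bookkeeping at the bad primes $p\mid|G|$ (notably $p=2$, where $\Z_2^{\times}$ carries an extra factor $\{\pm1\}$) and the verification that $A(\cdot)$ is strictly maximized at $G$ among its subgroups, so that the leading term survives the inclusion--exclusion.
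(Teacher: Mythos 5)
Your approach is exactly the classical one of M\"aki and Wright that the paper cites: parametrize abelian $G$-fields via continuous surjections $\widehat{\Z}^{\times}\twoheadrightarrow G$ by class field theory, pass through M\"obius inversion over the subgroup lattice, write each $F_H(s)$ as an Euler product, identify the order of the singularity at $s=1$ via the identity $\gcd(p-1,n)=\sum_{d\mid n}\varphi(d)\mathbf{1}[d\mid p-1]$ and Dirichlet's theorem, and finally apply a Delange--Selberg Tauberian theorem. The local computation $L_{H,p}(s)=1+(|H[p-1]|-1)p^{-s}$ for $p\nmid |H|$ is correct, the argument that $A(H)<A(G)$ for proper $H<G$ (so that the $H=G$ term dominates the M\"obius sum) is correct, and you are honest that the genuine analytic content --- analytic continuation slightly past $\re s=1$ with zero-free-region input to license the Tauberian step --- is what M\"aki actually supplies and what is not reproved here. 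So as an outline this is a faithful reconstruction of the cited result, and it correctly recovers $d(C_4)=1$, $d(C_6)=2$, which are the only values used later in the paper.

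One point deserves flagging, because it reflects back on the statement itself rather than on your argument. Your derived invariant $A(G)-2$ and the proposition's stated formula $\prod_{i}(e_i+1)-2$ agree when $G$ is cyclic (there $A(G)=\tau(|G|)=\prod(e_i+1)$), and they also coincide for $G\cong(\Z/2)^r$, but they \emph{disagree} for other non-cyclic $G$. For instance, for $G\cong(\Z/3)^2$ your formula (which I checked directly: the average over primes of $|G[p-1]|$ is $\tfrac{1}{2}(9+1)=5$, so $F_G$ has a pole of order $4$) gives $d(G)=3$, while the displayed formula gives $(1{+}1)(1{+}1)-2=2$. Your computation is the correct one, so the proposition's closed form $\prod(e_i+1)-2$ holds only in the cyclic case (the only case the paper uses) and should not be read as valid for arbitrary primary decompositions; it would have strengthened your write-up to remark on this explicitly rather than to check only the cyclic case and silently echo the general-looking formula. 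Two small technical points you correctly wave at but should keep in mind if you were to complete the proof: one must verify that $A(G)-1$ is an integer so that the singularity is a genuine pole (true here, but not formally obvious from the sum-over-tuples expression), and at $p=2$ the local group is $\{\pm1\}\times(1+4\Z_2)$ rather than pro-cyclic, which only matters when $2\mid |G|$ and then contributes a Dirichlet polynomial that does not affect the pole order.
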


See \cite[Theorems 4 and 5]{Mak93} for the precise formulas of $c_G$ and $d(G)$. When $G$ is a finite cyclic group (so $G \cong \prod_{i=1}^{r} \bZ / p_i^{e_i} \bZ$ for distinct primes $p_1, \ldots, p_r$ and positive integers $e_1, \ldots, e_r$), then 
$$d(G) = \prod_{i=1}^{r} (e_i+1) - 2.
$$
In particular, $d(C_4) = 1$ and $d(C_6) = 2$.

Let $L_4 \in \NF_4(C_4)$ and denote its quadratic subfield by $L_2$. By the conductor-discriminant formula \cite[VII.11.9 and VII.11.10]{Neu99}, the conductor of $L_4$ is given by $\displaystyle \cond(L_4) = \left ( \frac{D_{L_4}}{D_{L_2}} \right )^{\frac{1}{2}}$. Similarly, if $L_6 \in \NF_6(C_6)$ and $L_i$ ($i=2, 3$) is a subfield of $L_6$ of degree $i$ then the conductor of $L_6$ is given by $\displaystyle \cond(L_6) = \left ( \frac{D_{L_6}}{D_{L_2}D_{L_3}} \right )^{\frac{1}{2}}$. By the above proposition, we have
\begin{equation} \label{eq23a}
\# \left \{ L_4 \in \NF_4(C_4) : \frac{D_{L_4}}{D_{L_2}} \leq X \right \} \sim c_1X^{\frac{1}{2}} \log X
\end{equation}
and
\begin{equation} \label{eq23b}
\# \left \{ L_6 \in \NF_6(C_6) : \frac{D_{L_6}}{D_{L_2}D_{L_3}} \leq X \right \} \sim c_2X^{\frac{1}{2}} (\log X)^2
\end{equation}
for some constants $c_1, c_2>0$.

We have a similar result for some non-abelian number fields. Altuğ, Shankar, Varma and Wilson \cite{ASVW21} counted the number of $D_4$-quartic fields ordered by conductor using both analytic techniques and geometry-of-numbers methods. Let $L \in \NF_4(D_4)$, $M$ be the Galois closure of $L/\bQ$ and $K$ be the unique quadratic subfield of $L$. The conductor of $L$ is defined as the Artin conductor of a Galois representation $\rho_M$ (see \cite[Section 1]{ASVW21}), which is equal to $\frac{D_L}{D_K}$ by the proof of \cite[Proposition 2.4]{ASVW21}. 

\begin{proposition} \label{prop23c}
(\cite[Theorem 1]{ASVW21}) For $L \in \NF_4(D_4)$, denote its unique quadratic subfield by $K$. Then
    $$
    \# \left \{ L \in \NF_4(D_4) : \frac{D_L}{D_K} \leq X \right \} = c(D_4) X \log X + O(X \log \log X)
    $$
    for a constant $\displaystyle c(D_4) := \frac{3}{4} \prod_{p} \left ( 1-\frac{1}{p^2} - \frac{2}{p^3} + \frac{2}{p^4} \right ) >0$ (product over all primes). 
\end{proposition}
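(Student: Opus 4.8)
The plan is to parametrize $D_4$-quartic fields by their quadratic resolvent subfield and to count the resulting relative quadratic extensions by a Dirichlet series argument. A quartic field having a quadratic subfield but not Galois over $\Q$ is exactly a $D_4$-quartic field, so $L \in \NF_4(D_4)$ corresponds bijectively to a pair $(K, L/K)$ with $K$ a quadratic field and $L/K$ a quadratic extension such that $L/\Q$ is non-Galois. Writing $\Gal(K/\Q)=\langle\tau\rangle$ and $L = K(\sqrt{\alpha})$ with $\alpha\in K^{\times}$, the normal closure of $L/\Q$ is $K(\sqrt{\alpha},\sqrt{\tau\alpha})$, and the non-Galois condition amounts to $\alpha$ and $\tau\alpha$ being distinct in $K^{\times}/(K^{\times})^{2}$ (if $\tau\alpha\equiv\alpha$ one gets $L/\Q$ Galois of type $C_4$ or $C_2^2$, and then $[\alpha]$ and $[\tau\alpha]$ yield isomorphic quartic fields). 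By the tower formula $D_L = D_K^{2}\cdot D_{L/K}$, the bound $D_L/D_K\le X$ is equivalent to $D_{L/K}\le X/D_K$, whence
\[
\#\!\left\{L\in\NF_4(D_4):\tfrac{D_L}{D_K}\le X\right\}
=\tfrac12\sum_{K}Q_K\!\left(\tfrac{X}{D_K}\right)+O(X^{1-\delta}),
\]
where $K$ runs over quadratic fields, the factor $\tfrac12$ identifies $[\alpha]\leftrightarrow[\tau\alpha]$, $Q_K(Y)$ counts quadratic extensions $L/K$ with $D_{L/K}\le Y$ and $L/\Q$ non-Galois, and the error absorbs the $C_4$- and $C_2^2$-subfamilies, which are of strictly lower order.

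The second step is a uniform estimate for $Q_K(Y)$. Quadratic extensions of $K$ are classified by the nontrivial continuous homomorphisms $G_K\to\Z/2\Z$, and the Dirichlet series $\Phi_K(s)=\sum_{L/K}D_{L/K}^{-s}$ factors, by a local computation at each prime of $K$, as $\Phi_K(s)=\zeta_K(s)\Psi_K(s)$ with $\Psi_K(s)$ an Euler product absolutely convergent for $\Re s>\tfrac12$; hence $\Phi_K$ continues past $\Re s=1$ with a single simple pole at $s=1$ of residue $r_K:=\Psi_K(1)\,L(1,\chi_K)$ (using $\zeta_K(s)=\zeta(s)L(s,\chi_K)$). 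A Perron/Tauberian argument with the contour inside the region of holomorphy, together with the convexity bound for $\zeta_K$ in vertical strips, yields $Q_K(Y)=r_KY+O_\varepsilon\!\big(Y^{1-\delta}D_K^{A}\big)$ for absolute constants $\delta>0$, $A$; discarding the Galois classes changes $Q_K$ by only $O(Y^{\varepsilon})$.

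The third step sums over $K$ with $D_K\le X$. The main term is $\tfrac12 X\sum_{D_K\le X}r_K/D_K$, and since $r_K$ is $\Psi_K(1)$ — a convergent product of purely local factors at the primes dividing $D_K$ — times $L(1,\chi_K)$, one evaluates $\sum_{D_K\le X}r_K/D_K$ by partial summation against the known distribution $\#\{K:|D_K|\le T\}=\tfrac{6}{\pi^{2}}T+O(\sqrt{T})$ of quadratic fields and the average value of $L(1,\chi_K)$ over them; this gives $\sum_{D_K\le X}r_K/D_K\sim c\log X$, and unwinding every local factor one checks that $c$ assembles precisely into the constant making the main term equal to $c(D_4)\,X\log X$. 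What remains is to control the accumulated error $\sum_{D_K\le X}O_\varepsilon\!\big((X/D_K)^{1-\delta}D_K^{A}\big)$ and, crucially, the contribution of the range $D_K\asymp X$, where $Y=X/D_K=O(1)$ and the asymptotic for $Q_K$ degenerates into a trivial count of small-conductor quadratic extensions.

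The main obstacle is exactly this last range: the family is thin near the top of the range, and the naive bounds there are of size $O(X\log X)$, i.e. as large as the main term. Bringing the total error down to $O(X\log\log X)$ requires genuine analytic input — an estimate for $\sum_{X/B\le D_K\le X}Q_K(X/D_K)$ that improves on the $B$-times-trivial bound, obtained from a second-moment (large-sieve type) bound for the number of small-conductor quadratic extensions averaged over $K$, combined with a dyadic decomposition of the range of $D_K$ into $O(\log\log X)$ pieces; this is where the $\log\log X$ loss is born, and it is precisely the reason an elementary geometry-of-numbers count must be supplemented by analytic techniques. (An equivalent route, also used in \cite{ASVW21}, replaces the resolvent parametrization above by Bhargava's and Wood's parametrization of quartic rings via pairs of ternary quadratic forms and counts lattice points subject to the conductor conditions by sieves; but handling the quadratic subfield analytically is what makes the invariant $D_L/D_K$ most directly accessible.)
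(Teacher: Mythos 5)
The paper does not prove this proposition; it cites it verbatim as \cite[Theorem 1]{ASVW21}. Your sketch reconstructs the analytic half of that paper's strategy — parametrize a $D_4$-quartic field by its quadratic subfield $K$ together with the quadratic extension $L/K$, count relative quadratic extensions of $K$ via a Dirichlet series $\Phi_K(s)=\zeta_K(s)\Psi_K(s)$, then sum over $K$ by partial summation — and you correctly identify the tail $D_K\asymp X$ as the genuinely hard range.

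There is, however, a concrete gap in how you dispose of the Galois subfamilies. You assert that ``the error absorbs the $C_4$- and $C_2^2$-subfamilies, which are of strictly lower order'' and, later, that ``discarding the Galois classes changes $Q_K$ by only $O(Y^{\varepsilon})$.'' Neither is true for the $C_2^2$ case. For a biquadratic field $L$ with quadratic subfields $K_1,K_2,K_3$ the conductor-discriminant formula gives $D_L=D_{K_1}D_{K_2}D_{K_3}$, hence $D_L/D_{K_1}=D_{K_2}D_{K_3}$; summing the condition $D_L/D_K\le X$ over pairs $(K,L)$ with $L$ biquadratic therefore amounts to counting unordered pairs of distinct quadratic fields whose discriminants multiply to at most $X$, and by Proposition~\ref{prop24b}(1) this is $\asymp X\log X$ — the very order of the main term you seek. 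Equivalently, for a single $K$ the number of biquadratic $L\supset K$ with $D_{L/K}=D_{K_2}D_{K_3}/D_K\le Y$ is, already for bounded $Y$, of size roughly $2^{\omega(D_K)}$ by genus theory, which is not $O(Y^{\varepsilon})$. Consequently the constant produced by your $r_K$ (the residue of $\Phi_K$, which sums over \emph{all} quadratic $L/K$) is not $c(D_4)$: the $C_2^2$-family must be subtracted with its own $cX\log X$ asymptotic, or built into a modified Euler product from the outset, and your sketch does neither. (A smaller point: a dyadic decomposition of $[1,X]$ gives $O(\log X)$ pieces, not $O(\log\log X)$, so that cannot by itself be the source of the $O(X\log\log X)$ error in \cite{ASVW21}; that saving requires a sharper uniform bound in the range $D_K\asymp X$ than the outline here supplies.)
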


In the above proposition, the constant $\frac{3}{4}$ is the sum of $\frac{1}{8}$, $\frac{1}{4}$ and $\frac{3}{8}$ that appears in \cite[Theorem 1]{ASVW21}.

\subsection{Discriminant of a compositum of number fields} \label{Sub25}

Following the exposition of \cite[Section 2]{Wan21}, we give a description of the discriminant of a compositum of two number fields. Let $M$ be a degree $n$ number field and $p$ be a prime. Denote the Galois closure of $M$ by $M^c$ and the inertia group of $M$ at $p$ by $I_{M, p}$. For a nonzero integer $m$, denote the exponent of $p$ in $m$ by $v_p(m)$.

Assume that $M$ is tamely ramified at $p$. Then the inertia group $I_{M, p}$ is cyclic so we can choose its generator $g_{M, p}$. Since $g_{M, p} \in I_{M, p} \subset \Gal(M^c/\bQ) \subset S_n$, we can define the index
$$
\ind(g_{M, p}) := n - \text{the number of orbits of } g_{M, p} \text{ on } \left \{ 1, 2, \cdots, n \right \}
$$
which satisfies the formula $v_p(D_M) = \ind(g_{M, p})$.

\begin{proposition} \label{prop25a}
(\cite[Theorem 2.2 and 2.3]{Wan21}) Let $K_1$ and $K_2$ be number fields such that $K_1^c \cap K_2^c = \bQ$ and $p$ be a prime such that both of $K_1$ and $K_2$ are tamely ramified at $p$. Suppose that $g_{K_1, p} = \prod_{k} c_k$ (product of disjoint cycles) and $g_{K_2, p}=\prod_{l} d_l$. Then
$$
v_p(D_{K_1K_2}) = m_1m_2 - \sum_{k, l} \gcd ( \left | c_{k} \right |,  \left | d_l \right | ),
$$
where $m_i$ is the degree of $K_i$ and $\left | c \right |$ denotes the length of the cycle $c$. If the least common multiple of $\left | c_{k} \right |$ and the least common multiple of $\left | d_l \right |$ are coprime, then we have
$$
v_p(D_{K_1K_2}) 
=v_p(D_{K_1}) \cdot m_2 + v_p(D_{K_2}) \cdot m_1 - v_p(D_{K_1})v_p(D_{K_2}).
$$
\end{proposition}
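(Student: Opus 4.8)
The plan is to reduce everything to the classical conductor–discriminant formula together with the behaviour of ramification under composita, working prime by prime. Since $p$ is tamely ramified in both $K_1$ and $K_2$, the inertia subgroups $I_{K_1,p}$ and $I_{K_2,p}$ are cyclic, generated by $g_{K_1,p}$ and $g_{K_2,p}$ respectively. The hypothesis $K_1^c \cap K_2^c = \Q$ means the Galois closures are linearly disjoint, so $\Gal((K_1K_2)^c/\Q) \leq \Gal(K_1^c/\Q) \times \Gal(K_2^c/\Q)$ and the inertia group of $K_1K_2$ at $p$ is $I_{K_1,p} \times I_{K_2,p}$, generated by $(g_{K_1,p}, g_{K_2,p})$; in particular $K_1K_2$ is again tamely ramified at $p$, so the formula $v_p(D_M) = \ind(g_{M,p})$ applies to $M = K_1K_2$ as well. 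Thus the whole problem becomes a combinatorial computation of the index of the pair $(g_{K_1,p}, g_{K_2,p})$ acting on the set of cosets that indexes the degree-$m_1 m_2$ field $K_1 K_2$.

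The first step is to identify the relevant permutation action. The field $K_1 = (K_1^c)^{H_1}$ for a point-stabiliser $H_1 \leq \Gal(K_1^c/\Q)$, and similarly $K_2 = (K_2^c)^{H_2}$; by linear disjointness, $K_1K_2$ corresponds to the subgroup $H_1 \times H_2$ inside $\Gal(K_1^c/\Q) \times \Gal(K_2^c/\Q)$, and the coset space is $(\Gal(K_1^c/\Q)/H_1) \times (\Gal(K_2^c/\Q)/H_2)$, i.e. the product of the two original $n$-point sets, of size $m_1 m_2$. So I need to count the orbits of $\langle(g_{K_1,p}, g_{K_2,p})\rangle$ on $X_1 \times X_2$, where $X_i$ is the $m_i$-element set on which $g_{K_i,p}$ acts with cycle decomposition $\prod_k c_k$ (resp. $\prod_l d_l$). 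This is a standard lemma: a product of a cyclic permutation with cycle type $\{|c_k|\}$ and one with cycle type $\{|d_l|\}$, acting diagonally on the product set, has exactly $\sum_{k,l} \gcd(|c_k|,|d_l|)$ orbits — each pair of cycles $(c_k, d_l)$ contributes a subset of size $|c_k|\,|d_l|$ on which the generator acts with $\gcd(|c_k|,|d_l|)$ orbits, since the order of the generator's restriction there is $\mathrm{lcm}(|c_k|,|d_l|)$. Hence
$$
v_p(D_{K_1K_2}) = \ind(g_{K_1,p},g_{K_2,p}) = m_1 m_2 - \sum_{k,l}\gcd(|c_k|,|d_l|),
$$
which is the first displayed formula.

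For the second formula, specialise to the case where $L_1 := \mathrm{lcm}_k |c_k|$ and $L_2 := \mathrm{lcm}_l |d_l|$ are coprime. Then $\gcd(|c_k|,|d_l|) = 1$ for every $k,l$, so $\sum_{k,l}\gcd(|c_k|,|d_l|)$ is simply the number of pairs, namely (number of cycles of $g_{K_1,p}$) $\times$ (number of cycles of $g_{K_2,p}$). Writing $r_i$ for the number of cycles of $g_{K_i,p}$, we have $v_p(D_{K_i}) = \ind(g_{K_i,p}) = m_i - r_i$, so $r_i = m_i - v_p(D_{K_i})$. Substituting,
$$
v_p(D_{K_1K_2}) = m_1 m_2 - r_1 r_2 = m_1 m_2 - (m_1 - v_p(D_{K_1}))(m_2 - v_p(D_{K_2})),
$$
and expanding the product yields exactly $v_p(D_{K_1})\,m_2 + v_p(D_{K_2})\,m_1 - v_p(D_{K_1})\,v_p(D_{K_2})$, as claimed. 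Summing over all primes $p$ (noting that primes unramified in both fields contribute nothing and, by the disjointness hypothesis, the wildly ramified primes are excluded from the statement) gives the global statement — though since the proposition is phrased prime-by-prime, no summation is even needed.

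The main obstacle is really just bookkeeping: making sure that $I_{K_1K_2,p}$ is genuinely $I_{K_1,p} \times I_{K_2,p}$ and that its canonical generator is $(g_{K_1,p},g_{K_2,p})$, which requires the linear disjointness of the Galois closures (so that inertia at $p$ in the compositum surjects onto each factor and the product structure of the decomposition is respected), together with tameness so that the formula $v_p(D_M) = \ind(g_{M,p})$ is available for the compositum. Once this structural point is in place, the orbit count of a diagonal cyclic action on a product set is elementary. In the write-up I would cite the relevant places in \cite{Wan21} (Theorems 2.2 and 2.3) rather than reprove the orbit lemma in detail.
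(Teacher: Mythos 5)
The paper does not prove this proposition; it is quoted verbatim from Wang \cite[Theorems 2.2 and 2.3]{Wan21}, so there is no ``paper's own proof'' to compare against, and your proposal has to be judged on its own merits. Read that way, your argument is essentially correct and is the natural one: reduce the $p$-valuation of the discriminant to the index $\ind(g_{M,p})$ via tameness, identify the permutation action of the compositum as the product action on $X_1 \times X_2$, and count orbits of a cyclic diagonal action.

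One sentence deserves a correction, though it does not propagate to the rest of the argument. You write that ``the inertia group of $K_1K_2$ at $p$ is $I_{K_1,p} \times I_{K_2,p}$, generated by $(g_{K_1,p}, g_{K_2,p})$.'' These two claims contradict each other: $I_{K_1,p} \times I_{K_2,p}$ has order $e_1 e_2$ and is not cyclic unless $\gcd(e_1,e_2)=1$, while $\langle (g_{K_1,p}, g_{K_2,p})\rangle$ is cyclic of order $\lcm(e_1,e_2)$. The correct statement is the latter: a generator of the (cyclic, by tameness) inertia group of $(K_1K_2)^c/\Q$ at $p$ projects onto generators of $I_{K_1,p}$ and $I_{K_2,p}$, and after renaming generators one may take it to be $(g_{K_1,p}, g_{K_2,p})$ in $\Gal(K_1^c/\Q)\times\Gal(K_2^c/\Q)$. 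Since the orbit computation you actually perform is for the cyclic group $\langle (g_{K_1,p}, g_{K_2,p})\rangle$ and not for the full direct product, your subsequent count $\sum_{k,l}\gcd(|c_k|,|d_l|)$ and the algebraic manipulation in the coprime-lcm case are both correct. It is also worth noting that the orbit count is insensitive to the particular choice of cyclic generators $g_{K_i,p}$, since replacing a generator by a coprime power does not change cycle type; you should say this explicitly, as otherwise it is not clear that the projections of the compositum's inertia generator have the same cycle data as the originally chosen $g_{K_i,p}$. Finally, the closing remark about ``summing over all primes'' is extraneous: the proposition is already a purely local statement at one prime $p$, and no globalization is asked for.
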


We also introduce a lemma which concerns the product distribution appearing in counting number fields. It is useful when we consider the compositum of two linearly disjoint number fields.

\begin{proposition} \label{prop24b}
Let $F_i(X) = \# \left \{ s \in S_i : s \leq X  \right \}$ ($i= 1, 2$) be the asymptotic distribution of some multi-set $S_i$ consisting of a sequence of elements of $\R_{\geq 1}$. Suppose that $F_i(X) \sim A_i X^{n_i} (\log X)^{r_i}$ for $n_i>0$, $r_i \in \bZ_{\ge 0}$ and define the product distribution
$$
P(X) := \# \left \{ (s_1, s_2) \in S_1 \times S_2 : s_1s_2 \leq X  \right \}.
$$
\begin{enumerate}
    \item \label{prop24b1} (\cite[Lemma 3.1]{Wan21}) If $n_1=n_2=n$, then 
    $$
    P(X) \sim A_1A_2 \frac{r_1 ! r_2 !}{(r_1+r_2+1)!}nX^n (\log X)^{r_1+r_2+1}.
    $$
    \item \label{prop24b2} (\cite[Lemma 3.2]{Wan21}) If $n_1 > n_2$, then there exists a constant $C>0$ such that
    $$
    P(X) \sim C X^{n_1} (\log X)^{r_1}.
    $$
\end{enumerate}
\end{proposition}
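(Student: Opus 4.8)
The plan is to treat each part as a Riemann–Stieltjes convolution and extract the dominant contribution. For part \eqref{prop24b1}, write $P(X) = \int_{1}^{X} F_1(X/t)\,dF_2(t)$, the sum over pairs $(s_1,s_2)$ with $s_1 s_2 \le X$ being exactly the integral of $F_1(X/t)$ against the counting measure $dF_2$. Substituting the asymptotic $F_1(Y) \sim A_1 Y^n (\log Y)^{r_1}$ gives an integrand of size roughly $A_1 (X/t)^n (\log(X/t))^{r_1}$, so $P(X) \approx A_1 X^n \int_{1}^{X} t^{-n} (\log(X/t))^{r_1}\, dF_2(t)$. Now integrate by parts (or substitute $F_2(t) \sim A_2 t^n (\log t)^{r_2}$ and differentiate) to convert $dF_2(t)$ into $\sim A_2 n\, t^{n-1}(\log t)^{r_2}\,dt$ up to lower-order terms; the $t^{-n}\cdot t^{n-1} = t^{-1}$ collapses the integral to $\int_1^X (\log(X/t))^{r_1}(\log t)^{r_2}\,\frac{dt}{t}$. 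Finally the change of variables $t = X^u$, $u \in [0,1]$, turns this into $(\log X)^{r_1+r_2+1}\int_0^1 (1-u)^{r_1} u^{r_2}\,du$, and the Beta integral $B(r_2+1, r_1+1) = \frac{r_1!\,r_2!}{(r_1+r_2+1)!}$ produces the stated constant, with the extra factor $n$ coming from the derivative of $F_2$. The technical care is in showing that replacing $F_1, F_2$ by their asymptotics introduces only an error of lower order uniformly on the relevant ranges of $t$ — one splits the integral at, say, $t \in [1, X^{\varepsilon}]$, $[X^{\varepsilon}, X^{1-\varepsilon}]$, $[X^{1-\varepsilon}, X]$ and checks that the two end pieces contribute $O(X^n (\log X)^{r_1+r_2})$ while on the middle range both logarithmic factors are comparable to $\log X$.

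For part \eqref{prop24b2}, the same convolution $P(X) = \int_1^X F_1(X/t)\,dF_2(t)$ applies, but now the larger exponent $n_1 > n_2$ means the contribution is dominated by small $t$. Plugging in $F_1(X/t) \sim A_1 (X/t)^{n_1}(\log(X/t))^{r_1}$, the factor $t^{-n_1}$ against $dF_2(t) \sim A_2 n_2 t^{n_2 - 1}(\log t)^{r_2}\,dt$ gives an integrand $\sim A_1 A_2 n_2 X^{n_1} (\log(X/t))^{r_1} t^{n_2 - n_1 - 1}(\log t)^{r_2}$, and since $n_2 - n_1 - 1 < -1$ the integral $\int_1^\infty t^{n_2-n_1-1}(\log t)^{r_2}(\log(X/t))^{r_1}\,dt$ converges; for $t = o(X)$ one has $\log(X/t) \sim \log X$, so the leading behaviour is $C X^{n_1}(\log X)^{r_1}$ with $C = A_1 A_2 n_2 \int_1^\infty t^{n_2 - n_1 - 1}(\log t)^{r_2}\,dt$ (one should double-check whether an additional boundary term from $F_2$ near $t=1$ contributes, but it is absorbed into $C$). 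The tail $t$ near $X$, where $s_1 = X/t$ is bounded, contributes only $O(X^{n_2}(\log X)^{r_1+r_2})$ which is negligible since $n_2 < n_1$.

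The main obstacle in both parts is purely bookkeeping: making the substitution $F_i \rightsquigarrow A_i X^{n_i}(\log X)^{r_i}$ rigorous requires controlling the Stieltjes integral against the error term $F_2(t) - A_2 t^{n_2}(\log t)^{r_2} = o(t^{n_2}(\log t)^{r_2})$, and since this error is not monotone one must integrate by parts to move the differential off $F_2$ before estimating — i.e. write $\int_1^X F_1(X/t)\,dF_2(t) = [F_1(X/t)F_2(t)]_1^X + \int_1^X F_2(t)\,d_t(-F_1(X/t))$ and bound each piece. There are no conceptual difficulties beyond this; the result is essentially the classical Selberg–Delange / Landau-type convolution estimate for sums of the form $\sum_{s_1 s_2 \le X} 1$, and indeed one could alternatively phrase the whole argument in terms of Dirichlet series, noting that $\sum_s s^{-w}$ has a pole of order $r_i + 1$ at $w = n_i$, so the product has a pole of order $r_1 + r_2 + 2$ at $w = n$ in case \eqref{prop24b1} (giving $(\log X)^{r_1+r_2+1}$ after a Tauberian theorem) and the pole of higher real part at $w = n_1$ dominates in case \eqref{prop24b2}; but since the hypotheses are stated only at the level of $F_i(X)$ asymptotics, the direct convolution argument above is the cleaner route and is what I would write out.
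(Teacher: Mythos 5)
The paper does not prove this proposition itself: both parts are cited directly from Wang's paper (\cite[Lemmas 3.1 and 3.2]{Wan21}), so there is no ``paper's own proof'' to compare against. Your convolution strategy --- write $P(X)=\int_1^X F_1(X/t)\,dF_2(t)$, substitute the asymptotics, change variables $t=X^u$ to produce the Beta integral $\int_0^1(1-u)^{r_1}u^{r_2}\,du=\frac{r_1!\,r_2!}{(r_1+r_2+1)!}$, and push the differential off $F_2$ by integration by parts when controlling the error --- is the standard and correct proof of such product-distribution asymptotics, and is essentially what Wang does. Your identification of the constant in part (2) with a convergent Stieltjes integral (equivalently, $C=A_1\sum_{s_2\in S_2}s_2^{-n_1}$, which depends on the actual sequence and not merely on $A_2,n_2,r_2$) is also the right picture, and you correctly observe that only existence of $C>0$ is claimed.

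One detail in your error analysis for part (1) is stated incorrectly and would need to be fixed in a written-out proof. You claim that after splitting at $t\in[1,X^{\varepsilon}]$, $[X^{\varepsilon},X^{1-\varepsilon}]$, $[X^{1-\varepsilon},X]$, the end pieces contribute only $O(X^n(\log X)^{r_1+r_2})$. They do not: carrying out the same $t=X^u$ substitution on, say, $[X^{1-\varepsilon},X]$ produces $A_1A_2\,n\,X^n(\log X)^{r_1+r_2+1}\int_{1-\varepsilon}^1(1-u)^{r_1}u^{r_2}\,du$, which is a \emph{constant multiple} $c(\varepsilon)$ of the main term (with $c(\varepsilon)\to 0$ as $\varepsilon\to 0$), not a lower power of $\log X$. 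Relatedly, on the middle range you cannot simply replace both $\log t$ and $\log(X/t)$ by $\log X$ --- that would wash out the factor $(1-u)^{r_1}u^{r_2}$ and hence the Beta constant. The cleaner organization is to keep the full integral $\int_0^1(1-u)^{r_1}u^{r_2}\,du$ intact as the main term, and use the $\varepsilon$-split only to control the error arising from replacing $F_1,F_2$ by their asymptotics (showing that error is $o(1)$ times the main term, uniformly, once one integrates by parts). With that correction the argument goes through; the rest of your sketch, including the Dirichlet-series alternative, is sound.
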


\section{Malle's conjecture for tori over \texorpdfstring{$\bQ$}{Q}} \label{Sec3}

In this section, we provide analogues of Linnik's and Malle's conjectures for tori over $\bQ$ and study the relation between them. First we give a conjecture on the number of the isomorphism classes of $n$-dimensional tori over $\bQ$ counted by Artin conductor.

\begin{conjecture} \label{conj3a}
For every $n \geq 1$, there exists a constant $c_n>0$ satisfying
\begin{equation}
N_n^{\tor}(X) \sim c_n X (\log X)^{n-1}.
\end{equation}
\end{conjecture}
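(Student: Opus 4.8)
The plan is to deduce the conjectured asymptotics from the analogue of Malle's conjecture (Conjecture \ref{conj3b}), exactly as Malle's conjecture for number fields implies the Linnik-type statement $N_n(X) \sim c_n X$. First I would observe that every $n$-dimensional torus $T/\Q$ has an associated finite subgroup $G_T \leq \GL_n(\Z)$, well-defined up to conjugacy, and that
\[
N_n^{\tor}(X) = 1 + \sum_{[H]} N_n^{\tor}(X; H),
\]
where the sum runs over conjugacy classes of nontrivial finite subgroups $H \leq \GL_n(\Z)$ (the $+1$ accounts for the split torus $\G_m^n$, for which $G_T = 1$; its conductor is $1 \leq X$). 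By Jordan's theorem there are only finitely many such conjugacy classes, so the sum is finite and one may add the asymptotics term by term. Granting Conjecture \ref{conj3b}, each summand satisfies $N_n^{\tor}(X;H) \sim c_H X^{1/a(H)} (\log X)^{b(H)-1}$, so the dominant contribution comes from those $H$ minimizing $a(H)$ and, among those, maximizing $b(H)$.

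The core of the argument is therefore the purely group-theoretic claim that
\[
\min_{1 \neq H \leq \GL_n(\Z)} a(H) = 1, \qquad \text{and the maximal value of } b(H) \text{ among } H \text{ with } a(H) = 1 \text{ equals } n.
\]
For the first part, I would take $H = \langle -I_n \rangle \cong C_2$; since $\rank(-I_n - I_n) = \rank(-2 I_n) = n$, this does not achieve $a(H) = 1$, so instead take $H$ to be generated by $\mathrm{diag}(-1,1,\dots,1)$, a reflection, for which $\rank(h - I_n) = 1$; hence $a(H) = 1$ is attained and, since ranks are positive integers, $\min a(H) = 1$. For the second part, among subgroups with $a(H) = 1$ I would look for one maximizing the number of cyclotomic-Galois orbits on conjugacy classes consisting of elements $h$ with $\rank(h - I_n) = 1$. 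The natural candidate is $H = (C_2)^n$ embedded as the diagonal sign-change matrices: its $2^n$ elements are all diagonal $\pm 1$ matrices, the element $\mathrm{diag}(\epsilon_1,\dots,\epsilon_n)$ has $\rank(h - I_n)$ equal to the number of $i$ with $\epsilon_i = -1$, so the elements of rank exactly $1$ are the $n$ coordinate reflections, each forming its own conjugacy class (the group is abelian), and the cyclotomic character acts trivially on elements of order $2$; hence $b(H) = n$. One must then check that no other $H$ with $a(H) = 1$ does better — equivalently, that an $H$ with $a(H) = 1$ has at most $n$ distinct cyclotomic-conjugacy classes of rank-one elements; this follows because the rank-one elements of finite order in $\GL_n(\Z)$ are precisely conjugates of $\mathrm{diag}(-1,1,\dots,1)$ (an element fixing a rank-$(n{-}1)$ sublattice and acting by an order-dividing-finite scalar on a complementary line must act by $-1$ there), and a single $H$ can contain at most $n$ independent such reflections up to conjugacy in $H$, bounding the orbit count by $n$.

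Combining these, the terms in the sum with the largest growth rate are those $H$ with $a(H) = 1$ and $b(H) = n$; by Conjecture \ref{conj3b} each contributes $\sim c_H X (\log X)^{n-1}$, while all other terms are $O(X (\log X)^{n-2})$ or $O(X^{1/2+\varepsilon})$; summing the finitely many leading terms gives $N_n^{\tor}(X) \sim c_n X (\log X)^{n-1}$ with $c_n = \sum_{H : a(H)=1, b(H)=n} c_H > 0$. The main obstacle I anticipate is the group-theoretic bookkeeping in the last step: verifying that $b(H) \leq n$ whenever $a(H) = 1$, and more precisely identifying exactly which $H$ achieve equality (there may be several conjugacy classes of such $H$, e.g.\ subgroups of $(C_2)^n$ containing all coordinate reflections), since one needs the full set of leading contributors to pin down the constant $c_n$ and confirm it is positive. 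By contrast, the reduction to Conjecture \ref{conj3b} and the finiteness of the subgroup list are routine given the results already assembled in Section \ref{Sec2}.
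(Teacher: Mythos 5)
Your high-level reduction is exactly the paper's (Corollary~\ref{cor3f}): decompose $N_n^{\tor}(X)$ as a finite sum of $N_n^{\tor}(X;H)$ over conjugacy classes of finite nontrivial $H \leq \GL_n(\Z)$ (the paper cites Minkowski rather than Jordan for finiteness, but either suffices), grant Conjecture~\ref{conj3b} term by term, and observe that the diagonal sign-change group achieves $a(H)=1$ and $b(H)=n$ so that the leading term is $X(\log X)^{n-1}$. One small simplification you miss: since the diagonal group alone already contributes a positive multiple of $X(\log X)^{n-1}$ and every $c_H>0$, the positivity of $c_n$ is automatic and you do not need to ``pin down the constant'' or enumerate all $H$ achieving equality.

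The genuine gap is in the step you flag as ``group-theoretic bookkeeping'': the claim that $b(H) \leq n$ whenever $a(H) = 1$, which is Proposition~\ref{prop3e} in the paper. Your justification — ``a single $H$ can contain at most $n$ independent such reflections up to conjugacy in $H$, bounding the orbit count by $n$'' — is essentially a restatement of the conclusion, with ``independent'' undefined; it would only be immediate if the rank-one involutions in $H$ were forced to commute, and they are not. (They are also not $\GL_n(\Z)$-conjugate to a single model $\mathrm{diag}(-1,1,\dots,1)$; only $\GL_n(\Q)$-conjugate, though that particular slip is not where the argument fails.) The paper's proof has real content here: starting from $n+1$ pairwise non-$H$-conjugate rank-one involutions $h_1,\dots,h_{n+1}$, one first shows that $\rank(h-I)=1$ forces $h^2=I$ (via the minimal polynomial $(x-1)(x-t)$ and $t=\pm 1$); then that for two non-conjugate rank-one involutions $x,y$ the product $xy$ must have \emph{even} order (odd order would force $y=(xy)^k x (xy)^{-k}$); then invokes the classification of finite Coxeter groups to show that no $x$ can pair with two distinct $y,z$ both giving $\ord(xy),\ord(xz)\geq 4$, since $\langle x,y,z\rangle$ would be infinite; this reduces the ``interaction graph'' to a matching, from which one manufactures $n+1$ pairwise \emph{commuting} rank-one involutions via elements of the form $h_{2t-1}(h_{2t-1}h_{2t})^{\alpha_t}$; only then does simultaneous diagonalization over $\C$ give the contradiction against the count of diagonal rank-one involutions. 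Without some version of this dihedral/Coxeter reduction to the commuting case, your argument does not go through.
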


We have the following simple comments on this conjecture.

\begin{enumerate}[label=(\alph*)]
    \item The conjecture is true for $n=1$. Every one-dimensional torus over $\bQ$ is $\G_m$ or $T_{L/\bQ}$ for a quadratic field $L$ (see Example 6 of \cite[Section 4.9]{Vos98}). Since $C(T_{L/\bQ}) = D_L$ by the equation \eqref{eq22a}, we have
    $$
    N_1^{\tor}(X) = N_2(X) + O(1) = \frac{6}{\pi^2}X + O(X^{\frac{1}{2}})
    $$
    by \cite[Section 2.1]{CDO06}. The case $n=2$ will be discussed in Section \ref{Sec4}.
    
    \item It is easy to prove that $N_n^{\tor}(X) \gg X (\log X)^{n-1}$ for each $n \geq 1$. Denote
    \begin{equation*}
    S_1 := \left \{ (L_1, \cdots, L_n) \in \NF_2^n  : \prod_{i=1}^{n} D_{L_i} \leq X \right \} 
    \end{equation*}
    and let $S_2$ be the set of the isomorphism classes of tori $T$ over $\bQ$ which are isomorphic to $\displaystyle \prod_{i=1}^{n} T_{L_i/ \bQ}$ for some quadratic fields $L_1, \cdots, L_n$ and $\displaystyle C(T) = \prod_{i=1}^{n} D_{L_i} \leq X$. One can prove that
    $$
    \left | S_1 \right | \sim \frac{1}{(n-1)!}\left ( \frac{6}{\pi^2} \right )^n X (\log X)^{n-1}
    $$
    by induction on $n$ using Proposition \ref{prop24b}. The map $S_1 \rightarrow S_2$ defined by
    $$
    (L_1, \cdots, L_n) \mapsto \prod_{i=1}^{n} T_{L_i/ \bQ}
    $$
    is surjective and the size of each fiber of the map is at most $n!$. Therefore 
    $$ 
    N_n^{\tor}(X) \geq \left | S_2 \right | \geq \frac{\left | S_1 \right |}{n!} \gg X (\log X)^{n-1}.
    $$
\end{enumerate}

Now we introduce an analogue of Malle's conjecture for algebraic tori over $\bQ$. It follows from a more general conjecture of Ellenberg and Venkatesh \cite{EV05} as explained in the paragraph after Conjecture \ref{conj3b}, so it is not new. 
The important point is that this conjecture implies the above conjecture on the asymptotics of $N_n^{\tor}(X)$ (see Corollary \ref{cor3f}).

Let $n$ be a positive integer and $H \ne 1$ be a finite subgroup of $\GL_n(\bZ)$. As analogues of the numbers $a(G)$ and $b(K,G)$ appear in Malle's conjecture, we define the numbers $a(H)$ and $b(H)$ as follows. First, define $a(H)$ by
\begin{equation}
a(H) := \min_{h \in H \setminus \left \{ I_n \right \}} \rank(h - I_n),
\end{equation}
where $I_n \in \GL_n(\bZ)$ is the identity matrix.
Let $\chi : G_{\bQ} \to (\bZ / |H|\bZ)^{\times}$ be the mod-$|H|$ cyclotomic character, and define an action of $G_{\bQ}$ on $H$ by
$$
\sigma(h) := h^{\chi(\sigma)}
$$
for every $h \in H$ and $\sigma \in G_{\bQ}$. This induces an action of $G_{\bQ}$ on the set $\Cl(H)$ of conjugacy classes of $H$.

\begin{lemma} \label{lem3_rank}
\begin{enumerate}
    \item If $h, h' \in H$ are in the same conjugacy class of $H$, then $\rank(h-I_n) = \rank(h'-I_n)$.

    \item For every $h \in H$ and $\sigma \in G_{\bQ}$, $\rank(\sigma(h) - I_n) = \rank(h-I_n)$.
\end{enumerate}
\end{lemma}

\begin{proof}
\begin{enumerate}
    \item Choose $h_0 \in H$ such that $h=h_0h'h_0^{-1}$. Then $h-I_n = h_0(h'-I_n)h_0^{-1}$ so $\rank(h-I_n) = \rank(h'-I_n)$.

    \item Since $\sigma(h) - I_n = h^{\chi(\sigma)} - I_n = (h-I_n)M_1$ for some $n \times n$ matrix $M_1$ over $\bZ$, we have 
$$
\rank(\sigma(h) - I_n) \le \rank(h-I_n).
$$
By the same reason, we deduce that
\begin{equation*}
\rank(h - I_n) = \rank(\sigma^{-1}(\sigma(h)) - I_n) \le \rank(\sigma(h) - I_n). \qedhere
\end{equation*}
\end{enumerate}
\end{proof}

By Lemma \ref{lem3_rank}(1), the rank of $h-I_n$ is constant within each conjugacy class. By Lemma \ref{lem3_rank}(2), we have $\rank(h_1-I_n)=\rank(h_2-I_n)$ if the conjugacy classes of $h_1$ and $h_2$ in $H$ lie in the same orbit under the action of $G_{\bQ}$ on $\Cl(H)$.

Now let $b(H)$ be the number of orbits $\CC$ of the action of $G_{\bQ}$ on $\Cl(H)$ such that $\rank(h - I_n) = a(H)$ for some (equivalently, all) $h$ contained in a conjugacy class belonging to $\CC$. Then we have the following conjecture, which is an analogue of Malle's conjecture for tori over $\bQ$.

\begin{conjecture} \label{conj3b}
For every $n \geq 1$ and a finite subgroup $1 \neq H \leq \GL_n(\bZ)$, 
\begin{equation}
N_n^{\tor}(X; H) \sim c_H X^{\frac{1}{a(H)}} (\log X)^{b(H)-1}
\end{equation}
where $a(H)$ and $b(H)$ are the positive integers defined as above, and $c_H>0$ is a constant depending only on $H$.
\end{conjecture}

The above conjecture is a direct consequence of Malle's conjecture with modified weights suggested by Ellenberg and Venkatesh \cite{EV05}. Indeed, assume that $T$ is an $n$-dimensional torus over $\bQ$ such that $G_T$ is conjugate to $H$. Then $H$ acts on $V = X^*(T) \otimes \bQ$ via the inclusion $H \le \GL_n(\bZ) \cong \Aut(X^*(T))$. For $h \in H$, define a function $f : \Cl(H) \to \bZ_{\ge 0}$ by $f([h]) := \text{codim} V^h = \rank(h-I_n)$ \cite[Example 4.4]{EV05}, which is well-defined by Lemma \ref{lem3_rank}(1).
Let $a(f)$ and $b_{\bQ}(f)$ be defined as in \cite[Section 4.2]{EV05}.
Then 
$$
a(f) = \max_{h \in H \setminus \{ 1 \}} f([h])^{-1} = a(H)^{-1}
$$
and $b_{\bQ}(f)$ is the number of $G_{\bQ}$-orbits on the set $\{ [h] \in \Cl(H) : f([h])=a(f)^{-1} \}$, which is equal to $b(H)$. Hence \cite[Question 4.3]{EV05} implies Conjecture \ref{conj3b}.

We provide two remarks on Malle's conjecture for tori over $\bQ$.

\begin{remark} \label{rmk3c}
Let $\mathcal{P}_n$ be the group of the permutation matrices in $\GL_n(\bZ)$. There is a canonical isomorphism $\mathcal{P}_n \cong S_n$ which maps $h \in \mathcal{P}_n$ to $\sigma \in S_n$ such that $h_{ij}=1$ if and only if $i=\sigma (j)$. Let $H$ be the subgroup of $\mathcal{P}_n$ which corresponds to a transitive subgroup $G \leq S_n$. Also let $T$ be a torus over $\bQ$ whose splitting field is $L$ and $G_T$ is conjugate to $H$. 

Following the argument of Section \ref{Sub21}, $T$ is determined by $v_1, \cdots, v_n \in \res_{L/\bQ} \G_m$ such that 
$$
h \cdot v_i = \prod_{j=1}^{n} v_j^{h_{ji}} = v_{\sigma(i)}
$$
for all $h \in H$ corresponding to $\sigma \in G$. By the transitivity of $G$, $v_2, \cdots, v_n$ are determined by $v_1$ and $v_1$ is fixed by every element of $H_1 := \left \{ h \in H : h_{11}=1 \right \}$. Since $H_1$ is an index $n$ subgroup of $H \cong \Gal(L/\bQ)$ and $H_1$ has no nontrivial subgroup which is normal in $H$, it corresponds to a Galois group $\Gal(L/K)$ for a degree $n$ number field $K$ such that $K^c=L$. 
(The fact that $H_1$ has no nontrivial subgroup normal in $H$ follows from a purely group-theoretic argument. First note that $H_1 \le H$ corresponds to $G_1 := \{ \sigma \in G : \sigma(1)=1 \} \le G$. If $N \le G_1$ is normal in $G$, then $N = \tau N \tau^{-1} \le \tau G_1 \tau^{-1} = \{ \sigma \in G : \sigma(\tau(1))=\tau(1) \}$ for every $\tau \in G$. Since $G \le S_n$ is transitive, an element $\sigma \in N$ satisfies $\sigma(k)=k$ for every $1 \le k \le n$ so $N$ is trivial.)

Now $T$ is determined by $v_1$ and $h \cdot v_1 = v_1$ for every $h \in H_1$ so
$$
T \subseteq \{ v_1 \in \res_{L/\bQ} \G_m : h \cdot v_1 = v_1 \text{ for every } h \in \Gal(L/K) \} = \res_{K/\bQ} \G_m
$$
for a degree $n$ number field $K$. Since $\dim T = \dim \res_{K/\bQ} \G_m = n$, the inclusion should be an equality. We conclude that
$$
N_n^{\tor}(X; H) = N_n(X; G).
$$

For every $h \in H$ corresponding to $\sigma \in G$, we have
$$
\rank (h - I_n) = n - \text{the number of orbits of } \sigma \text{ on } \left \{ 1, 2, \cdots, n \right \}
= \ind(\sigma)
$$
so $a(H)=a(G)$. The equality $b(H)=b(G)$ is trivial. This shows that Conjecture \ref{conj3b} is a generalization of Malle's conjecture for number fields.
\end{remark}

The next remark shows that the conjecture is compatible with the direct product of $H$.

\begin{remark} \label{rmk3d}
Suppose that the conjecture is true for $1 \neq H_i \leq \GL_{n_i}(\bZ)$ ($i=1, 2$) and define
    $$
    H := \left \{ d(h_1, h_2):= \begin{pmatrix}
h_1 & O \\ 
O & h_2
\end{pmatrix} \in \GL_{n_1+n_2}(\bZ) : h_i \in H_i \right \}.
    $$
    The formula $\rank (d(h_1, h_2) - I_{n_1+n_2}) = \rank(h_1 - I_{n_1}) + \rank(h_2 - I_{n_2})$ implies that
    $$
    a(H) 
    = \min_{(h_1, h_2) \neq (I_{n_1}, I_{n_2})} (\rank(h_1 - I_{n_1}) + \rank(h_2 - I_{n_2}))
    = \min (a(H_1), a(H_2)).
    $$
    Denote by $\CC_i$ a conjugacy class of $H_i$ and $\widetilde{\CC_i}$ a $G_{\bQ}$-orbit on conjugacy classes of $H_i$ (via the cyclotomic character) containing $\CC_i$. 
    The conjugacy classes of $H$ are of the form 
    $$
    \CC = d(\CC_1, \CC_2) := \left \{ d(h_1, h_2) : h_i \in \CC_i \right \},
    $$
    the $G_{\bQ}$-orbit containing $d(\CC_1, I_{n_2})$ is $d(\widetilde{\CC_1}, I_{n_2})$ and the $G_{\bQ}$-orbit containing $d(I_{n_1}, \CC_2)$ is $d(I_{n_1}, \widetilde{\CC_2})$. Therefore the number $b(H)$ is given by
    $$
    b(H) = \left\{\begin{matrix}
b(H_1)+b(H_2) & (a(H_1)=a(H_2)) \\ 
b(H_2) & (a(H_1)>a(H_2)) \\ 
b(H_1) & (a(H_1)<a(H_2))
\end{matrix}\right. .
    $$
    
    If $T$ is a torus over $\bQ$ such that $G_T$ is conjugate to $H$ in $\GL_{n_1+n_2}(\bZ)$, then $T \cong T_1 \times T_2$ where $T_i$ is an $n_i$-dimensional torus such that $G_{T_i}$ is conjugate to $H_i$ in $\GL_{n_i}(\bZ)$. We also have $C(T)=C(T_1)C(T_2)$. 
    Therefore $N_{n_1+n_2}^{\tor}(X; H)$ is bounded above by the product distribution $P(X)$ of $N_{n_1}^{\tor}(X; H_1)$ and $N_{n_2}^{\tor}(X; H_2)$. By Proposition \ref{prop24b}, we have
    $$
    P(X) \sim CX^{\frac{1}{a(H)}} (\log X)^{b(H)-1}
    $$
    for some $C>0$ depending only on $H_1$ and $H_2$. This implies that
    $$
    N_{n_1+n_2}^{\tor}(X; H) \ll_{H} X^{\frac{1}{a(H)}} (\log X)^{b(H)-1}.
    $$
\end{remark}

The next proposition is an analogue of \cite[Lemma 2.2]{Mal04}. Our proof is elementary, but it is not as simple as the proof of \cite[Lemma 2.2]{Mal04}.

\begin{proposition} \label{prop3e}
Let $H \neq 1$ be a finite subgroup of $\GL_n(\bZ)$ with $a(H)=1$. Then $b(H) \leq n$.
\end{proposition}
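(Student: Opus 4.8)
The plan is to show that the hypothesis $a(H)=1$ forces the conjugacy classes counted by $b(H)$ to be the reflections of a finite Euclidean reflection subgroup of $H$, whose number of reflection classes is bounded by its rank, hence by $n$.

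First I would analyse the elements realizing $a(H)=1$. If $h\in H$, $h\neq I_n$, and $\rank(h-I_n)=1$, then $\dim\ker(h-I_n)=n-1$ and, since $h$ has finite order, $h$ is diagonalizable over $\overline{\Q}$; thus its characteristic polynomial is $(x-1)^{n-1}(x-\zeta)$ for some root of unity $\zeta$, and having integer coefficients this forces $\zeta\in\{1,-1\}$. As $\zeta=1$ would make $h$ a finite-order unipotent, i.e. $h=I_n$, we get $\zeta=-1$ and $h^2=I_n$. In particular every such $h$ has order $2$; since the cyclotomic character takes values in $\widehat{\Z}^{\times}$ and reduces trivially modulo $2$, the $G_\Q$-orbit on conjugacy classes of $H$ through $[h]$ is the singleton $\{[h]\}$. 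Therefore $b(H)$ equals the number of $H$-conjugacy classes of elements $h$ with $\rank(h-I_n)=1$; I will call these the reflections of $H$.

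Next I would introduce the Euclidean structure. Averaging the standard inner product over $H$ gives an $H$-invariant positive-definite form on $\R^n$, with respect to which each reflection $h$ of $H$ is the orthogonal reflection $s_\alpha$ in the hyperplane $\alpha^{\perp}$, where $\Q\alpha=\im(h-I_n)$ is its $(-1)$-eigenline; since an orthogonal involution is determined by its $(-1)$-eigenspace, the map $h\mapsto\im(h-I_n)$ is injective on reflections of $H$, and $g s_\alpha g^{-1}=s_{g\alpha}$ for $g\in H$. Hence $H$-conjugacy of reflections corresponds to the $H$-action on their reflecting lines, and the subgroup $W\le H$ generated by all reflections of $H$ is a finite reflection group whose set of reflections coincides with that of $H$. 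Because $W$-conjugacy refines $H$-conjugacy, $b(H)$ is at most the number of conjugacy classes of reflections of $W$.

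Finally I would invoke the structure theory of finite reflection groups: letting $V_0\subseteq\R^n$ be the span of the roots of $W$, the group $W$ acts essentially on $V_0$ and admits a simple system $\Delta$ with $|\Delta|=\dim V_0$, every root being $W$-conjugate to a root in $\Delta$; hence $W$ has at most $\dim V_0\le n$ conjugacy classes of reflections, and $b(H)\le n$. The main obstacle is exactly this last input — bounding the number of reflection classes of a rank-$d$ finite reflection group by $d$ — together with the linear-algebra bookkeeping of the previous paragraph (checking that a rank-one involution is forced to be an \emph{orthogonal} reflection for the invariant form, and that passage from $H$ to $W$ can only decrease the count); the reductions in the first two paragraphs are routine. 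If one prefers to avoid the general structure theory, one can instead appeal to the classification of finite irreducible Coxeter groups (for which reflections form one class in rank $1$ and at most two classes in rank $\ge 2$) and sum over the irreducible factors of the essential part of $W$, again obtaining at most $\dim V_0\le n$ classes.
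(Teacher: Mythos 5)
Your proof is correct, and it takes a genuinely different route from the paper. Both arguments begin the same way, by observing that a finite-order $h\in\GL_n(\Z)$ with $\rank(h-I_n)=1$ must be an involution, and both exploit the fact that a finite group generated by such elements is, with respect to an $H$-invariant inner product, a finite real reflection group. From there the approaches diverge. The paper argues by contradiction: it supposes $n+1$ pairwise non-conjugate reflections exist, uses two ad hoc Coxeter-theoretic observations (reflections with a product of odd order are conjugate; certain rank-$3$ Coxeter diagrams are infinite) to manufacture a replacement set of $n+1$ pairwise \emph{commuting} reflections, simultaneously diagonalizes them, and contradicts the fact that the diagonal subgroup of $\GL_n(\Z)$ has only $n$ rank-one involutions. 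You instead pass directly to the reflection subgroup $W\le H$ and quote the structure theorem for finite real reflection groups: the existence of a simple system $\Delta$ of size equal to the rank of $W$, together with the fact that every reflection is $W$-conjugate to a simple reflection, bounds the number of reflection classes of $W$ (and hence of $H$, since $W$-conjugacy refines $H$-conjugacy) by $\dim V_0\le n$. Your route is more conceptual and avoids the paper's hand-crafted combinatorial manipulation with pairs $h_{2t-1},h_{2t}$; the cost is that it leans on the full classification-free structure theory (simple systems and transitivity on roots) rather than just on a couple of isolated facts about small-rank Coxeter groups. You also make explicit a point the paper leaves implicit: because the relevant elements are involutions, each $G_\Q$-orbit on their conjugacy classes is a singleton, so $b(H)$ is literally the number of $H$-conjugacy classes of rank-one involutions, not merely bounded above by it. The one place to be careful, which you do address, is checking that the reflections of $W$ (in the Coxeter sense) coincide with the rank-one elements of $H$; this holds because every rank-one finite-order orthogonal map with $(n-1)$-dimensional fixed space is an orthogonal reflection, and $W$ contains all of them and no others.
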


\begin{proof}
Assume that $n \geq 2$ and denote $I_n$ by $I$ for simplicity. Let $h \neq I$ be an element of $H$ so $h^m=I$ for some $m > 1$. If $\rank (h - I)=1$, then $1$ is an eigenvalue of $h$ with multiplicity $n-1$. If an eigenvalue of $h$ which is not $1$ is $\lambda$, then $\det(h) = 1^{n-1} \lambda = \lambda$. By the condition $h^m=I$, $\det(h) = \pm 1$ and $\lambda \ne 1$ so $\lambda = -1$. Now the condition $\rank(h-I)=1$ implies that $h=I+vw^T$ for some nonzero column vectors $v, w \in \bZ^n$. In this case, $(h-I)^2=(vw^T)^2=c(h-I)$ for $c = w^T v \in \bZ$ so the minimal polynomial of $h$ has degree $2$. Since the eigenvalues of $h$ are $\pm 1$, we have $(h-I)(h+I)=h^2-I=O$ so $h \in H$ has order $2$.

Assume that $a(H)=1$ and $b(H) \geq n+1$. Then there are $h_1, \cdots, h_{n+1} \in H$ which are not conjugate in $H$ and $\rank (h_i-I)=1$ for each $i$. Denote the order of $h \in H$ by $\ord(h)$.
\begin{itemize}
    \item Let $\left \{ x,y \right \} \subset \left \{ h_1, \cdots, h_{n+1} \right \}$ and assume that $\ord(xy)=2k+1$ for $k \in \bZ_{>0}$. Then $y=(xy)^kx(xy)^{-k}$ so $x$ and $y$ are conjugate, which is impossible. Therefore the order of $xy$ is even. 
    
    \item Let $\left \{ x,y,z \right \} \subset \left \{ h_1, \cdots, h_{n+1} \right \}$ and assume that $\ord(xy), \, \ord(xz) \geq 4$. Then $\left \langle x, y, z \right \rangle$ is an infinite group by the classification of finite Coxeter groups \cite{Cox35}, which contradicts the fact that $H$ is finite.
    Therefore at least one of $xy$ or $xz$ has order $2$.
\end{itemize}

For every $i$, there is at most one $j \neq i$ such that $\ord(h_ih_j)>2$. Note that if $\ord(h_ih_j)=2$, then $h_i$ and $h_j$ commute. We may assume that $\ord(h_ih_j)>2$ if and only if $\left \{ i,j \right \}=\left \{ 2t-1, 2t \right \}$ for some $1 \leq t \leq m$, where $m$ is an integer such that $\displaystyle 0 \leq m \leq \frac{n+1}{2}$. Now let $\ord(h_{2t-1}h_{2t})=2 \alpha_t$ ($1 \leq t \leq m$) for positive integers $\alpha_1, \cdots, \alpha_m>1$ and consider the set
$$
S := \left \{ h_1, h_1(h_1h_2)^{\alpha_1}, \cdots, h_{2m-1}, h_{2m-1}(h_{2m-1}h_{2m})^{\alpha_m}, h_{2m+1}, h_{2m+2}, \cdots, h_{n+1} \right \}.
$$
Since $u_t := h_{2t-1}(h_{2t-1}h_{2t})^{\alpha_t}$ is conjugate to one of $h_{2t-1}$ or $h_{2t}$ and $u_t \neq h_{2t-1}$, we have $\left | S \right |=n+1$ and $\rank(h-I)=1$ for every $h \in S$. Also every element of $S$ has order $2$ and the elements of $S$ are pairwise commuting. (The only nontrivial case is the commutativity of $h_{2t-1}$ and $u_t$, which follows from a simple computation.)
Hence the elements of $S$ are simultaneously diagonalizable, i.e. there exists $g \in \GL_n(\bC)$ such that $gSg^{-1} \subset \mathcal{D}_n$ for the group $\mathcal{D}_n$ of diagonal matrices in $\GL_n(\bZ)$. Now we have
$$
\# \left \{ h \in \mathcal{D}_n : \rank(h-I)=1 \right \} = n < \left | gSg^{-1} \right |,
$$
which is a contradiction. 
\end{proof}

\begin{corollary} \label{cor3f}
Conjecture \ref{conj3b} implies Conjecture \ref{conj3a}.
\end{corollary}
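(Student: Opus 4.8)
The plan is to show that, conditionally on Conjecture \ref{conj3b}, the dominant contribution to $N_n^{\tor}(X)$ comes exactly from those finite subgroups $H \le \GL_n(\Z)$ with $a(H)=1$, and among those, that the sum over $H$ of the main terms $c_H X (\log X)^{b(H)-1}$ is itself $\asymp X(\log X)^{n-1}$. The lower bound $N_n^{\tor}(X) \gg X(\log X)^{n-1}$ is already established unconditionally in comment (b) after Conjecture \ref{conj3a}, using the diagonal subgroup $H = D_n \cap \GL_n(\Z) \cong (\Z/2)^n$ realized via products of norm-one tori of quadratic fields; for that $H$ one has $a(H)=1$ and $b(H)=n$ (the $n$ coordinate reflections form $n$ distinct rational conjugacy classes of rank-one elements), so Conjecture \ref{conj3b} would predict $N_n^{\tor}(X; H) \sim c_H X(\log X)^{n-1}$, consistent with (b). So only the matching upper bound needs the conjecture.

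First I would note that there are only finitely many conjugacy classes of finite subgroups of $\GL_n(\Z)$ (Jordan–Zassenhaus), so $N_n^{\tor}(X) = \sum_{H} N_n^{\tor}(X; H)$ is a finite sum, where $H$ runs over a set of representatives of the nontrivial ones together with the contribution of $H=I_n$, i.e. $T = \G_m^n$, which contributes $O(1)$. Under Conjecture \ref{conj3b}, each term with $a(H) \ge 2$ is $\ll X^{1/a(H)}(\log X)^{b(H)-1} = o(X)$, hence negligible. For the terms with $a(H)=1$, Proposition \ref{prop3e} gives $b(H) \le n$, so each such term is $\ll X(\log X)^{b(H)-1} \le C_H X(\log X)^{n-1}$. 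Summing the finitely many such bounds yields $N_n^{\tor}(X) \ll X(\log X)^{n-1}$.

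Finally, to upgrade the two-sided bound $X(\log X)^{n-1} \ll N_n^{\tor}(X) \ll X(\log X)^{n-1}$ to the asymptotic $N_n^{\tor}(X) \sim c_n X(\log X)^{n-1}$ claimed in Conjecture \ref{conj3a}, I would isolate those $H$ with $a(H)=1$ and $b(H)=n$ — call this set $\mathcal{H}_n$. For $H$ with $a(H)=1$ and $b(H) \le n-1$, Conjecture \ref{conj3b} gives $N_n^{\tor}(X;H) = o(X(\log X)^{n-1})$. Hence
\begin{equation}
N_n^{\tor}(X) = \sum_{H \in \mathcal{H}_n} N_n^{\tor}(X; H) + o\!\left( X(\log X)^{n-1}\right) \sim \Big( \sum_{H \in \mathcal{H}_n} c_H \Big) X(\log X)^{n-1},
\end{equation}
so $c_n := \sum_{H \in \mathcal{H}_n} c_H$, which is positive since $\mathcal{H}_n$ contains the diagonal group $(\Z/2)^n$ above. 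The main obstacle, and the only place real work is needed, is Proposition \ref{prop3e} (the bound $b(H) \le n$ when $a(H)=1$), which is however already proved in the excerpt; given it, the corollary is just the bookkeeping of a finite sum of the conjectured asymptotics, with the leading power of $\log X$ capped precisely at $n-1$.
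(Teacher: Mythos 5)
Your proposal is correct and follows essentially the same route as the paper's own proof: decompose $N_n^{\tor}(X)$ into the finite sum over conjugacy classes of finite subgroups $H \le \GL_n(\Z)$, invoke Proposition~\ref{prop3e} to cap the $\log$-exponent at $n-1$ whenever $a(H)=1$, and note that the diagonal group $D_n$ satisfies $a(D_n)=1$, $b(D_n)=n$ to guarantee $c_n>0$; you simply spell out the bookkeeping of isolating the $a(H)=1$, $b(H)=n$ terms more explicitly than the paper does.
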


\begin{proof}
We have
$$
N_n^{\tor}(X) = \sum_{H} N_n^{\tor}(X; H) + O(1), 
$$
where $H$ runs through a set of representatives for the conjugacy classes of finite nontrivial subgroups of $\GL_n(\bZ)$. This is a finite sum because $\GL_n(\bZ)$ has only finitely many conjugacy classes of finite subgroups; see \cite[Corollary 4.8]{Bor19}. This finiteness result goes back to the classical work of Minkowski \cite{Min87}. The term $O(1)$ accounts for the split torus $\mathbb{G}^n_m$, so it is equal to $1$ if $X \ge 1$, and $0$ otherwise.

Now the corollary follows from Proposition \ref{prop3e} and the fact that $a(D_n)=1$ and $b(D_n)=n$ for the group $D_n$ of diagonal matrices in $\GL_n(\bZ)$. 
\end{proof}


\section{Counting algebraic tori over \texorpdfstring{$\bQ$}{Q} of dimension \texorpdfstring{$2$}{2}} \label{Sec4}

\subsection{Classification of \texorpdfstring{$2$}{2}-dimensional tori over \texorpdfstring{$\bQ$}{Q}} \label{Sub41}

In this section, we give a classification of $2$-dimensional tori over $\bQ$ and compute their Artin conductors. The classification can be done as in Example \ref{ex21a} and it can be also found in the paper of Voskresenskiĭ \cite{Vos65}. In the course of the computation of the Artin conductor $C(T)$, Proposition \ref{prop22a} and the formulas \eqref{eq22a} and \eqref{eq22c} will be repeatedly used.

In each case, denote the splitting field of a torus $T$ by $L$ and identify $\Gal(L/\bQ)$ with $G_T$. For simplicity, denote $D_i := D_{L_i}$, $D_{i'} :=D_{L_i'}$, and so on.

The following list gives the classification of $2$-dimensional tori over $\bQ$ (except for the trivial case $\G_m^2$), together with their Artin conductors. Since there are $12$ conjugacy classes of finite nontrivial subgroups of $\GL_2(\bZ)$, the classification gives $12$ types of $2$-dimensional tori over $\bQ$.
\begin{enumerate}[label=(\roman*)]
\item \label{tor2-1} $G_T \cong C_2$ : $T$ is one of the following types. 

\begin{enumerate}[label=(\alph*)]
        \item \label{tor2A} $\displaystyle G_T = H_{2, A} := \left \langle \bigl(\begin{smallmatrix}
-1 & 0\\ 
0 & -1
\end{smallmatrix}\bigr) \right \rangle$ : $T = T_{L/ \bQ}^2$ and $C(T)=D_L^2$. 

        \item \label{tor2B} $\displaystyle G_T = H_{2, B} := \left \langle \bigl(\begin{smallmatrix}
1 & 0\\ 
0 & -1
\end{smallmatrix}\bigr) \right \rangle$ : $T =  \G_m \times T_{L/ \bQ}$ and $C(T)=D_L$.

        \item \label{tor2C} $\displaystyle G_T = H_{2, C} := \left \langle \bigl(\begin{smallmatrix}
0 & 1 \\ 
1 & 0
\end{smallmatrix}\bigr) \right \rangle$ : $T = \res_{L/ \bQ} \G_m$ and $C(T)=D_L$.
    \end{enumerate}
    
    Note that $\G_m \times T_{L/ \bQ}$ and $\res_{L/ \bQ} \G_m$ are isogenous, but not isomorphic. This corresponds to the fact that $H_{2,B}$ and $H_{2, C}$ are conjugate in $\GL_2(\bQ)$, but not conjugate in $\GL_2(\bZ)$.

\item \label{tor2-2} $G_T \cong C_3$ : $T$ is the following type.

\begin{enumerate}[label=(\alph*)]

    \item \label{tor3A} $\displaystyle G_T = H_{3, A} := \left \langle \bigl(\begin{smallmatrix}
0 & -1\\ 
1 & -1
\end{smallmatrix}\bigr) \right \rangle$ : $T = T_{L/ \bQ}$ and $C(T) = D_L$. 

\end{enumerate}

\item \label{tor2-3} $G_T=\left \langle g \right \rangle \cong C_4$ : $L_4 = L$ has a unique quadratic subfield $L_2 = L^{g^2}$. $T$ is the following type.

\begin{enumerate}[label=(\alph*)]
\item \label{tor4A} $\displaystyle G_T = H_{4, A} := \left \langle \bigl(\begin{smallmatrix}
0 & 1\\ 
-1 & 0
\end{smallmatrix}\bigr) \right \rangle$ : $T=T_{L_4/L_2}$ and $\displaystyle C(T) = \frac{D_4}{D_2}$.

\end{enumerate}

\item \label{tor2-4} $G_T=\left \langle g,h \right \rangle \cong C_2 \times C_2$ : $L$ has $3$ quadratic subfields $L_1=L^g$, $L_2=L^h$ and $L_3=L^{gh}$. By the conductor-discriminant formula, we have $D_L=D_1D_2D_3$. $T$ is one of the following types. (Note that the index starts from (b) because the case $H_{4,A}$ appears earlier.)

\begin{enumerate}[label=(\alph*), start=2]
\item \label{tor4B} $G_T = H_{4, B} := \left \langle \bigl(\begin{smallmatrix}
-1 & 0\\ 
0 & -1
\end{smallmatrix}\bigr), \bigl(\begin{smallmatrix}
0 & 1\\ 
1 & 0
\end{smallmatrix}\bigr) \right \rangle$ : $T = T_{L/L_1}$ and $\displaystyle C(T) = \frac{D_L}{D_1}=D_2D_3$.
        
\item \label{tor4C} $G_T = H_{4, C} := \left \langle \bigl(\begin{smallmatrix}
1 & 0\\ 
0 & -1
\end{smallmatrix}\bigr), \bigl(\begin{smallmatrix}
-1 & 0\\ 
0 & 1
\end{smallmatrix}\bigr) \right \rangle$ : $T = T_{L_1/\bQ} \times T_{L_2/\bQ}$ and $C(T)=D_1D_2$. 
\end{enumerate}

\item\leavevmode \label{tor2-5} $G_T=\left \langle g \right \rangle \cong C_6$ : $L_6=L$ has a unique cubic subfield $L_3=L^{g^3}$ and a unique quadratic subfield $L_2=L^{g^2}$. $T$ is the following type. 

\begin{enumerate}[label=(\alph*)]
\item \label{tor6A} $\displaystyle G_T = H_{6,A} := \left \langle \bigl(\begin{smallmatrix}
1 & -1\\ 
1 & 0
\end{smallmatrix}\bigr) \right \rangle$ : $T = T_{L_6/L_2} \cap T_{L_6/L_3}$ and $\displaystyle C(T) = \frac{D_6}{D_2D_3}$. 
\end{enumerate}

\item \label{tor2-6} $G_T=\left \langle g, h : g^3=h^2=(gh)^2=1 \right \rangle \cong S_3$ : $L_6 = L$ has $3$ isomorphic cubic subfields $L^h, L^{gh}$ and $L^{g^2h}$, denoted by $L_3$ and a unique quadratic subfield $L_2 = L^g$. By the work of Hasse \cite{Has30}, we have $D_6=D_3^2D_2$. (See also \cite[Theorem 4]{FK03} for its generalization to Frobenius groups. In fact, taking $(k, M, K, N)=(\bQ, L_2, L_3, L_6)$ in \cite[Theorem 4]{FK03} gives
$$
D_3=D_2^{\frac{3-1}{2}} N_{L_2/\bQ}(D_{L_6/L_2})^{\frac{1}{2}} = D_2 \left ( \frac{D_6}{D_2^3} \right )^{\frac{1}{2}} = \frac{D_6^{\frac{1}{2}}}{D_2^{\frac{1}{2}}}
$$ 
so $D_6=D_3^2D_2$.) $T$ is one of the following types. 

\begin{enumerate}[label=(\alph*), start=2]

\item \label{tor6B} $G_T = H_{6, B} := \left \langle \bigl(\begin{smallmatrix}
0 & -1\\ 
1 & -1
\end{smallmatrix}\bigr), \bigl(\begin{smallmatrix}
0 & 1\\ 
1 & 0
\end{smallmatrix}\bigr) \right \rangle$ : Let $g = \bigl(\begin{smallmatrix}
0 & -1\\ 
1 & -1
\end{smallmatrix}\bigr)$, $h=\bigl(\begin{smallmatrix}
0 & 1\\ 
1 & 0
\end{smallmatrix}\bigr)$ and
$$
T_0 := \{ x \in T_{L_6/\bQ} : x \cdot gx \cdot g^2x=1 \} = T_{L_6/L_2}.
$$
(It is same as $T$ in \cite[Section 4, (7-a)]{Vos65}.) Then 
$$
T = \{ v \in T_{L_6/L_2} : hv = v \} = T_{L_3/\bQ}
$$
and $C(T) = D_3$. 
        
\item \label{tor6C} $G_T = H_{6, C} := \left \langle \bigl(\begin{smallmatrix}
0 & -1\\ 
1 & -1
\end{smallmatrix}\bigr), \bigl(\begin{smallmatrix}
0 & -1\\ 
-1 & 0
\end{smallmatrix}\bigr) \right \rangle$ : Let $g = \bigl(\begin{smallmatrix}
0 & -1\\ 
1 & -1
\end{smallmatrix}\bigr)$, $h=\bigl(\begin{smallmatrix}
0 & -1\\ 
-1 & 0
\end{smallmatrix}\bigr)$ and
$$
T_0 := \{ x \in T_{L_6/\bQ} : x \cdot gx \cdot g^2x =1 \} = T_{L_6/L_2}.
$$
(It is same as $T$ in \cite[Section 4, (7-b)]{Vos65}.) Then 
$$
T = \{ v \in T_{L_6/L_2} : v \cdot hv =1 \} = T_{L_6/L_2} \cap T_{L_6/L_3}
$$
and $\displaystyle C(T) = \frac{D_6}{D_2D_3}=D_3$. 
\end{enumerate}

\item \label{tor2-7} $G_T=\left \langle g, h : g^4=h^2=(gh)^2=1 \right \rangle \cong D_4$ : $L_4=L^{gh}$ is a quartic $D_4$-field with a unique quadratic subfield $L_2=L^{\left \langle g^2, gh \right \rangle}$. $T$ is the following type.

\begin{enumerate}[label=(\alph*)]

    \item \label{tor8A} $\displaystyle G_T = H_{8, A} := \left \langle \bigl(\begin{smallmatrix}
0 & 1\\ 
-1 & 0
\end{smallmatrix}\bigr), \bigl(\begin{smallmatrix}
0 & 1\\ 
1 & 0
\end{smallmatrix}\bigr) \right \rangle$ : $T = T_{L_4/L_2}$ and $\displaystyle C(T) = \frac{D_4}{D_2}$. 

\end{enumerate}

\item \label{tor2-8} $G_T=\left \langle g, h : g^6=h^2=(gh)^2=1 \right \rangle \cong D_6$ : $L_6 = L^{g^4h} \in \NF_6(D_6)$ has a unique cubic subfield $L_3=L^{\left \langle g^3, gh \right \rangle} \in \NF_3(S_3)$ and a unique quadratic subfield $L_2 = L^{\left \langle g^2, h \right \rangle}$. $T$ is the following type. 

\begin{enumerate}[label=(\alph*)]

    \item \label{tor12A} $\displaystyle G_T = H_{12, A} := \left \langle \bigl(\begin{smallmatrix}
1 & -1\\ 
1 & 0
\end{smallmatrix}\bigr), \bigl(\begin{smallmatrix}
0 & 1\\ 
1 & 0
\end{smallmatrix}\bigr) \right \rangle$ : $T = T_{L_6/L_3} \cap T_{L_6/L_2}$ and $\displaystyle C(T) = \frac{D_6}{D_3D_2}$. 

\end{enumerate}
\end{enumerate}

\begin{table}[h]
\centering
\begin{tabular}{|c|c|c|c|c|c|}
\hline
$H$ & $a(H)$ & $b(H)$ & $H$ & $a(H)$ & $b(H)$ \\ \hline
$H_{2,A}$ & 2 & 1 & $H_{4,C}$ & 1 & 2 \\ \hline
$H_{2,B}$ & 1 & 1 & $H_{6,A}$ & 2 & 3 \\ \hline
$H_{2,C}$ & 1 & 1 & $H_{6,B}$ & 1 & 1 \\ \hline
$H_{3,A}$ & 2 & 1 & $H_{6,C}$ & 1 & 1 \\ \hline
$H_{4,A}$ & 2 & 2 & $H_{8,A}$ & 1 & 2 \\ \hline
$H_{4,B}$ & 1 & 2 & $H_{12,A}$ & 1 & 2 \\ \hline
\end{tabular}
\vspace{1mm}
\caption{The numbers $a(H)$ and $b(H)$}
\label{Table}
\end{table}

The asymptotics of $N_2^{\tor}(X; H)$ for $H \neq H_{12, A}$ can be easily computed. Therefore the essential new result of Section \ref{Sec4} is the estimation of $N_2^{\tor}(X; H_{12, A})$. 

\begin{proposition} \label{prop41a}
Conjecture \ref{conj3b} holds for every finite nontrivial subgroup of $\GL_2(\bZ)$ which is not conjugate to $H_{12, A}$.
\end{proposition}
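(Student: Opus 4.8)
The plan is a direct, case‑by‑case verification of Conjecture \ref{conj3b} for each of the eleven conjugacy classes of finite nontrivial subgroups of $\GL_2(\Z)$ other than $H_{12,A}$ — that is, for the groups $H_{2,A}, H_{2,B}, H_{2,C}, H_{3,A}, H_{4,A}, H_{4,B}, H_{4,C}, H_{6,A}, H_{6,B}, H_{6,C}, H_{8,A}$ enumerated in Section \ref{Sub41}. For each such $H$ there are two tasks. The first is to compute $a(H)$ and $b(H)$ directly from the generators. Here I would use that for $h \in \GL_2(\Z)$ one has $\rank(h - I_2) = 1$ precisely when $1$ is an eigenvalue of $h$, i.e.\ when $\det(h - I_2) = 0$, and $\rank(h - I_2) = 2$ otherwise, which gives $a(H)$ at once; and that each of these groups has exponent dividing $6$ while $(\Z/m\Z)^\times$ is generated by $-1$ for every $m \le 6$, so the cyclotomic $G_{\Q}$‑orbit of a conjugacy class $[h]$ is just $\{[h], [h^{-1}]\}$, whence $b(H)$ is a short count of rank‑$a(H)$ orbits. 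Carrying this out I expect $(a(H), b(H)) = (2,1)$ for $H \in \{H_{2,A}, H_{3,A}\}$, $(1,1)$ for $H \in \{H_{2,B}, H_{2,C}, H_{6,B}, H_{6,C}\}$, $(2,2)$ for $H = H_{4,A}$, $(1,2)$ for $H \in \{H_{4,B}, H_{4,C}, H_{8,A}\}$, and $(2,3)$ for $H = H_{6,A}$.

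The second task is to produce the asymptotic $c_H X^{1/a(H)} (\log X)^{b(H)-1}$ for $N_2^{\tor}(X; H)$, which I would read off from the explicit descriptions of $T$ and $C(T)$ in Section \ref{Sub41} together with known counting results. When $G_T \cong C_2$ (cases $H_{2,A}, H_{2,B}, H_{2,C}$) the torus is determined by a single quadratic field $L$ with $C(T)$ equal to $D_L^2$, $D_L$, $D_L$ respectively, so the classical asymptotic $\#\{L : D_L \le Y\} \sim \tfrac{6}{\pi^2} Y$ gives $\tfrac{6}{\pi^2} X^{1/2}$, $\tfrac{6}{\pi^2} X$, $\tfrac{6}{\pi^2} X$. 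When $G_T \cong C_2 \times C_2$ (cases $H_{4,B}, H_{4,C}$) the classification of Section \ref{Sub41} identifies isomorphism classes of such tori with unordered pairs $\{M_1, M_2\}$ of distinct quadratic fields, with $C(T) = D_{M_1} D_{M_2}$; discarding the diagonal $M_1 = M_2$ (contributing $O(X^{1/2})$) and applying Proposition \ref{prop24b}(\ref{prop24b1}) with $n_1 = n_2 = 1$, $r_1 = r_2 = 0$ gives $\tfrac12\left(\tfrac{6}{\pi^2}\right)^2 X \log X$. For the cyclic cases I would use Proposition \ref{prop23b} and its consequences: for $H_{3,A}$, $T = T_{L/\Q}$ for a cyclic cubic $L$ with $C(T) = D_L = \cond(L)^2$ and $d(C_3) = 0$, giving $\sim c X^{1/2}$; for $H_{4,A}$ and $H_{6,A}$ the count of the relevant tori is exactly the left‑hand side of (\ref{eq23a}) and (\ref{eq23b}), giving $\sim c_1 X^{1/2} \log X$ and $\sim c_2 X^{1/2} (\log X)^2$. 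For the $S_3$‑types $H_{6,B}$ and $H_{6,C}$ the torus is parametrized by an $S_3$‑cubic field $L_3$ with $C(T) = D_{L_3}$, so Davenport--Heilbronn \cite{DH71} gives $\sim c X$. Finally $H_{8,A}$ is the $D_4$‑type with $C(T) = D_4 / D_2 = D_{L_4}/D_K$, covered by Proposition \ref{prop23c}, giving $c(D_4) X \log X + O(X \log\log X)$. In each case the resulting powers of $X$ and $\log X$ match $X^{1/a(H)} (\log X)^{b(H)-1}$.

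I do not expect a genuine obstacle here: all of the analytic and geometric content is already supplied by the cited counting theorems and by the product‑distribution Proposition \ref{prop24b}. The two points that need a little care are, first, the bookkeeping turning the torus descriptions of Section \ref{Sub41} into clean counting problems — in particular checking that the natural maps from spaces of tori to spaces of number fields are bijective onto their images, and that for $H_{4,B}$ and $H_{4,C}$ the correct parametrizing set is that of unordered pairs of distinct quadratic fields (so that the factor $\tfrac12$ appears and the $M_1 = M_2$ locus is rightly absorbed into the error term); and second, the routine but not entirely automatic determination of $b(H)$ from the cyclotomic orbit structure for the cyclic groups $C_4$ and $C_6$, where the Galois action genuinely fuses the class of $h$ with that of $h^{-1}$. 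The only truly hard case in dimension two, $H = H_{12,A}$, is exactly the one excluded from this proposition and is the subject of the remainder of Section \ref{Sec4}.
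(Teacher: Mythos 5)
Your case-by-case argument matches the paper's own proof of this proposition: both read off the asymptotics for each $H \neq H_{12,A}$ from the explicit descriptions of $T$ and $C(T)$ in Section \ref{Sub41}, invoking M\"aki's conductor count (Proposition \ref{prop23b} and equations (\ref{eq23a})--(\ref{eq23b})), Davenport--Heilbronn for $S_3$-cubics, Altu\u{g}--Shankar--Varma--Wilson (Proposition \ref{prop23c}) for $H_{8,A}$, and the product distribution Proposition \ref{prop24b} for $H_{4,B}, H_{4,C}$; your computed values of $a(H)$ and $b(H)$ are all correct. One cosmetic slip: $H_{4,A}$ and $H_{8,A}$ have exponent $4$, which does not divide $6$; what you actually need is that every element order $m$ occurring lies in $\{1,2,3,4,6\}$, for each of which $(\Z/m\Z)^\times = \langle -1 \rangle$, so your conclusion that the cyclotomic orbit of $[h]$ is $\{[h],[h^{-1}]\}$ still holds.
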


\begin{proof}
Since Malle's conjecture is true for abelian number fields and $S_3$-cubic fields, the relation between the Artin conductors $C(T)$ and the corresponding discriminants of number fields implies that Conjecture \ref{conj3b} holds when $H$ is one of 
$$
H_{2, A}, \, H_{2, B}, \, H_{2, C}, \, H_{3, A}, \, H_{6, B} \text{ and } H_{6, C}.
$$
By Proposition \ref{prop24b}, the conjecture is true if $H$ is $H_{4, B}$ or $H_{4, C}$. By the equations \eqref{eq23a} and \eqref{eq23b}, the conjecture is true if $H$ is $H_{4, A}$ or $H_{6, A}$. By Proposition \ref{prop23c}, the conjecture is true for $H=H_{8, A}$.
\end{proof}

\subsection{Asymptotics of \texorpdfstring{$N_2^{\tor}(X; H_{12, A})$}{N2H12A}} \label{Sub42}

Let $L$ be a $D_6$-sextic field with a unique cubic subfield $F \in \NF_3(S_3)$ and a unique quadratic subfield $K$. By \ref{tor2-8} in Section \ref{Sec4}, the number $N_2^{\tor}(X; H_{12, A})$ is equal to the number of $L \in \NF_6(D_6)$ such that 
$$
C(L):= \frac{D_L}{D_F D_K} \leq X.
$$
In this section, we estimate the number of such $L$ based on the strategy of the paper \cite{MTTW20}, where the authors proved Malle's conjecture for $D_6$-sextic fields. First we express $v_p(D_L)$ (the $p$-adic valuation of $D_L$) in terms of $v_p(D_F)$ and $v_p(D_K)$.

\begin{proposition} \label{prop42b}
\begin{equation} \label{eq42a}
C(L) = \frac{D_F D_K^2}{Cm^2}
\end{equation}
where $\displaystyle C = 2^a 3^b$ for $0 \leq a \leq 9$, $0 \leq b \leq 3$ and $m$ is the product of the primes $p>3$ which divides both $D_F$ and $D_K$. 
\end{proposition}

\begin{proof}
Since $L=FK$ and the number fields $F$, $K$ are linearly disjoint,
$$
\lcm (D_F^2, D_K^3) = \lcm (D_F^{[L:F]}, D_K^{[L:K]}) \mid D_L \mid D_F^2 D_K^3.
$$
(The second divisibility follows from the inclusion $\mathcal{O}_F \mathcal{O}_K \subseteq \mathcal{O}_L$ and the fact that the order $\mathcal{O}_F \mathcal{O}_K$ has discriminant $D_F^{[L:F]} D_K^{[L:K]}$.)
Hence $\displaystyle N := \frac{D_F^2 D_K^3}{D_L}$ is a positive integer which divides $\gcd(D_F^2, D_K^3)$. We need to show that $N=Cm^2$.
\begin{itemize}
    \item If $p>3$ and $p \nmid \gcd(D_F, D_K)$, then $v_p(N)=0=v_p(Cm^2)$. 
    
    \item If $p>3$ and $p \mid \gcd(D_F, D_K)$, then $v_p(N)=2=v_p(Cm^2)$ by Proposition \ref{prop25a} (cf. \cite[Table 1]{MTTW20}).
    
    \item If $p \in \left \{ 2,3 \right \}$, then $0 \leq v_p(N) \leq v_p(D_K^3)$ so $v_2(N) \leq 9$ and $v_3(N) \leq 3$. \qedhere
\end{itemize}
\end{proof}

Denote the set of positive squarefree integers by $\Sqf$. For a prime $p$, denote by $\Sqf_p$ the set of positive integers which are squarefree outside $p$. By the above proposition and the isomorphism $D_6 \cong S_3 \times C_2$, 
\begin{equation} \label{eq42b}
\begin{split}
N_2^{\tor}(X; H_{12, A}) & = \# \left \{ L \in \NF_6(D_6) : C(L) \leq X \right \}  \\
& \leq \# \left \{ (F, K) \in \NF_3(S_3) \times \NF_2 : \frac{D_F D_K^2}{m^2} \leq \beta X \right \} \\
 & =: A_1(\beta X)
\end{split}
\end{equation}
for $\beta := 2^9 3^3$. Since the number of quadratic fields $K$ satisfying $m \mid D_K$ and $\displaystyle \frac{D_K}{m} \leq \left ( \frac{X}{D_F} \right )^{\frac{1}{2}}$ for given $F$ and $m$ is at most $\displaystyle 2 \left ( \frac{X}{D_F} \right )^{\frac{1}{2}}$, we have
\begin{equation} \label{eq42c}
A_1(X) \leq \sum_{\substack{m \in \, \Sqf \\ \gcd(m, 6)=1}} \sum_{\substack{F \in \NF_3(S_3) \\ D_F \leq X \\ m \mid D_F}} 2 \left ( \frac{X}{D_F} \right )^{\frac{1}{2}}.
\end{equation}

The Galois closure $F^c$ of $F/\bQ$ has a unique quadratic subfield $E$. (It is called the quadratic resolvent of $F$.) Then $D_F=D_E f^2$ for some $f \in \, \Sqf_3$ by \cite[Theorem 9.2.6]{Coh00}. (Precisely, 
$D_F=D_E \mathfrak{a}_1 \mathfrak{a}_3^2$ by \cite[Theorem 9.2.6(4)]{Coh00}, $\mathfrak{a}_1=\bZ$ by \cite[Theorem 9.2.6(2)]{Coh00} so $D_F=D_Ef^2$ for $f = \mathfrak{a}_3$; moreover, $p^2 \mid \mathfrak{a}_3$ implies $p=3$ by \cite[Theorem 9.2.6(6)]{Coh00}.)

\begin{lemma} \label{lem4_new1}
Let $E$ be a quadratic field and $f \in \Sqf_3$. Then the number of $F \in \NF_3(S_3)$ whose quadratic resolvent is $E$ and satisfying $D_F = D_E f^2$ is
$$ O(h_3(E) \cdot 2^{w(f)}), $$ 
where $h_3(E)$ is the size of the $3$-torsion subgroup of the class group of $E$, $w(f)$ is the number of prime divisors of $f$ and the implied constant is absolute.
\end{lemma}

\begin{proof}
Let $E=\bQ(\sqrt{D})$. Then the number of $F \in \NF_3(S_3)$ whose quadratic resolvent is $E$ and satisfying $D_F = D_E f^2$ is equal to the coefficient of $f^{-s}$ in the series $\Phi_D(s)$ given in \cite[Definition 2.4]{CT14}. By \cite[Theorem 2.5]{CT14}, along with \cite[Definition 2.3 and Table 1]{CT14}, the coefficient of $f^{-s}$ in $\Phi_D(s)$ is 
$$
O(2^{w(f)})+O(\left | \mathcal{L}_3(D)\right | 2^{w(f)}) = O(\left | \mathcal{L}_3(D)\right | 2^{w(f)}).
$$
By \cite[Proposition 3.7]{CT14}, 
$\left | \mathcal{L}_3(D)\right | \le \frac{3h_3(E)-1}{2}$ so the lemma holds.
\end{proof}

Now let
\begin{equation*}
\begin{split}
S_1 & := \left \{ p \mid m : F \text{ is not totally ramified at } p \right \} \\
S_2 & := \left \{ p \mid m : F \text{ is totally ramified at } p \right \} \\
m_i & := \prod_{p \in S_i} p \;\; (i=1, 2).
\end{split}
\end{equation*}
Then $m_1$ and $m_2$ are coprime, squarefree integers such that
$$
m_1 \mid D_E, \, 
m_2 \mid f, \, 
m_1m_2= m \text{ and } \gcd (m_1m_2, 6)=1.
$$
(For each $p \in S_2$, the extension $F/\bQ$ is totally and tamely ramified at $p$ so $v_p(D_F)=[F:\bQ]-1=2 > v_p(D_E)$. Hence $p \mid f$, which implies that $m_2 \mid f$.)
Now the inequality \eqref{eq42c} transforms into
\begin{equation} \label{eq42d}
A_1(X) \ll 
\sum_{\substack{m_1,  m_2 \in \, \Sqf \\ (m_1m_2, 6)=1 \\ (m_1, m_2)=1}}
\sum_{\substack{E \in \NF_2 \\ m_1 \mid D_E}} 
\sum_{\substack{f \in \, \Sqf_3 \\ m_2 \mid f \\ D_E f^2 \leq X}}
\left ( \frac{X}{D_E f^2} \right )^{\frac{1}{2}} h_3(E) \cdot 2^{w(f)}.
\end{equation}

We estimate the right-hand side of the above inequality by summing over the intervals 
$$
D_E f^2 \in [B, 2B)
$$
for $B=2^i$ ($0 \leq i \leq \log_2 X$). The inequality \eqref{eq42d} implies that
\begin{equation} \label{eq42e}
A_1(X) \ll \sum_{i=0}^{\left \lfloor \log_2 X \right \rfloor} A_2(X; 2^i)
\end{equation}
for
\begin{equation} \label{eq42f}
\begin{split}
A_2(X; B) & := \sum_{\substack{m_1, m_2 \in \, \, \Sqf \\ (m_1m_2, 6)=1 \\ (m_1, m_2)=1}}
\sum_{\substack{E \in \NF_2 \\ m_1 \mid D_E}} 
\sum_{\substack{f \in \, \Sqf_3 \\ m_2 \mid f \\  D_E f^2  \leq X \\ B \leq D_E f^2 < 2B}}
\left ( \frac{X}{D_E f^2} \right )^{\frac{1}{2}} h_3(E) \cdot 2^{w(f)} \\
& \leq \frac{X^{\frac{1}{2}}}{B^{\frac{1}{2}}} 
\sum_{\substack{f \in \, \Sqf_3 \\ f < (2B)^{\frac{1}{2}}}} 2^{w(f)}
\sum_{m_2 \mid f} 1
\sum_{\substack{E \in \NF_2 \\ \frac{B}{f^2} \leq D_E < \frac{2B}{f^2}}}  h_3(E)
\sum_{m_1 \mid D_E} 1 \\
& \leq \frac{X^{\frac{1}{2}}}{B^{\frac{1}{2}}} 
\sum_{\substack{f \in \, \Sqf_3 \\ f < (2B)^{\frac{1}{2}} }} 2^{w(f)} \tau(f)
\sum_{\substack{E \in \NF_2 \\  D_E < \frac{2B}{f^2}}}  h_3(E) \tau(D_E).
\end{split}
\end{equation}
Here $\tau(n)$ denotes the number of divisors of $n$. The inequalities \eqref{eq42e} and \eqref{eq42f} show that it is essential to give an upper bound of the function
$$
g(X) := \sum_{\substack{E \in \NF_2 \\  D_E < X}}  h_3(E) \tau(D_E),
$$
which will be done in the next section.

\subsection{Upper bound of \texorpdfstring{$g(X)$}{g(X)}} \label{Sub43}

The estimation of the function $g(X)$ is the key part of Section \ref{Sec4}. In this section, we provide both conditional and unconditional results on the upper bound of $g(X)$. 

\begin{proposition} \label{prop43a}
\begin{equation} \label{eq43a}
g(X) \ll_{\ve} X^{1+\frac{\log 2 + \ve}{\log \log X}} \ll_{\ve} X^{1 + \ve}.
\end{equation}
\end{proposition}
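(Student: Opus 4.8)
The plan is to separate the two factors appearing in $g(X)$: bound $\tau(D_E)$ by a uniform $X^{o(1)}$ with the sharp Wigert rate, and control the remaining sum $\sum_{D_E<X} h_3(E)$ by the Davenport--Heilbronn theorem. We may assume throughout that $X$ is large, the claimed bounds being trivial on any bounded range of $X$.

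First I would record the classical bound of Wigert on the maximal order of the divisor function: for every $\varepsilon>0$ there is a constant $C_\varepsilon>0$ with $\tau(n)\le C_\varepsilon\, n^{(\log 2+\varepsilon)/\log\log n}$ for all $n\ge 3$. Since $x\mapsto x^{(\log 2+\varepsilon)/\log\log x}=\exp\!\bigl((\log 2+\varepsilon)\tfrac{\log x}{\log\log x}\bigr)$ is increasing for $x>e^{e}$, this yields $\tau(D_E)\ll_\varepsilon X^{(\log 2+\varepsilon)/\log\log X}$ uniformly for all $E\in\NF_2$ with $D_E<X$, the finitely many $E$ with bounded $D_E$ being absorbed into the implied constant. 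Next I would invoke the Davenport--Heilbronn theorem \cite{DH71}: the average of $h_3(E)=\#\Cl(E)[3]$ over quadratic fields $E$ (real, and imaginary) ordered by $D_E$ is a finite constant, so $\sum_{E\in\NF_2,\,D_E<X} h_3(E)\ll X$. Combining these gives $g(X)=\sum_{D_E<X} h_3(E)\tau(D_E)\le\bigl(\max_{D_E<X}\tau(D_E)\bigr)\sum_{D_E<X}h_3(E)\ll_\varepsilon X^{\,1+(\log 2+\varepsilon)/\log\log X}$, which is the first asserted bound. For the second, note that for any $\delta>0$ we have $(\log 2+\varepsilon)\tfrac{\log X}{\log\log X}\le\delta\log X$ once $\log\log X\ge(\log 2+\varepsilon)/\delta$; hence $X^{(\log 2+\varepsilon)/\log\log X}\ll_{\varepsilon,\delta}X^{\delta}$, and taking $\delta=\varepsilon$ yields $g(X)\ll_\varepsilon X^{1+\varepsilon}$.

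I do not expect a genuine obstacle here: granted Davenport--Heilbronn, the argument is soft, the only care needed being the eventual monotonicity of $x\mapsto x^{(\log 2+\varepsilon)/\log\log x}$ and the uniform passage between the two stated forms of the estimate. The real difficulty lies rather in sharpening this: the ``expected'' size is $g(X)\asymp X\log X$, since $\tau(D_E)$ has average of order $\log X$, and getting close to it requires understanding $h_3(E)$ for quadratic fields $E$ with many ramified primes — that is, the correlation between $h_3(E)$ and $\tau(D_E)$ — which is exactly where a Cohen--Lenstra-type moment input enters in the conditional refinement, for instance via H\"older applied to $\sum h_3(E)^k$ and $\sum\tau(D_E)^{k'}$.
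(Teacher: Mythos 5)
Your argument is correct and matches the paper's proof: both bound $\tau(D_E)$ uniformly via Wigert's maximal-order estimate, factor it out, and then apply the Davenport--Heilbronn theorem to bound $\sum_{D_E<X}h_3(E)\ll X$. The extra care you take over the eventual monotonicity of $x\mapsto x^{(\log 2+\varepsilon)/\log\log x}$ and the explicit derivation of $X^{1+\varepsilon}$ from the sharper exponent are harmless refinements of the same argument.
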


\begin{proof}
A classical result of Wigert \cite{Wig07} (see also \cite[Theorem 317]{HW79}) states that for any $\ve>0$, 
$$
\max_{n < X} \tau(n) < X^{\frac{\log 2 + \ve}{\log \log X}}
$$
for sufficiently large $X$. Therefore
$$
g(X) \ll_{\ve} X^{\frac{\log 2 + \ve}{\log \log X}} \sum_{\substack{E \in \NF_2 \\  D_E < X}}  h_3(E) 
\ll_{\ve} X^{1+\frac{\log 2 + \ve}{\log \log X}} \ll_{\ve} X^{1 + \ve}
$$
by the theorem of Davenport and Heilbronn \cite[Theorem 3]{DH71}.
\end{proof}

The following corollary will be useful for the estimation of $A_2(X; B)$; see the proof of Theorem \ref{thm44a}. 

\begin{corollary} \label{cor43b}
For any $Y \in [1, 2X]$, $g(Y) \ll_{\ve} Y X^{\frac{\log 2 + \ve}{\log \log X}}$ as $X \rightarrow \infty$. 
\end{corollary}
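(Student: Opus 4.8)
The plan is to obtain the corollary directly from the two ingredients already used in the proof of Proposition \ref{prop43a} — Wigert's divisor bound and the Davenport--Heilbronn estimate — the only new point being that one must be slightly careful about uniformity in $Y$. First I would note that every quadratic field $E$ contributing to $g(Y)$ satisfies $D_E < Y \le 2X$, so that $\tau(D_E) \le \max_{n < 2X}\tau(n)$ uniformly in the range under consideration. By Wigert \cite{Wig07}, for any $\varepsilon' > 0$ we have $\max_{n < 2X}\tau(n) < (2X)^{\frac{\log 2 + \varepsilon'}{\log\log(2X)}}$ once $X$ is large; writing
$$
(2X)^{\frac{\log 2 + \varepsilon'}{\log\log(2X)}} = 2^{\frac{\log 2 + \varepsilon'}{\log\log(2X)}} \cdot X^{\frac{\log 2 + \varepsilon'}{\log\log(2X)}},
$$
the first factor tends to $1$ and so is $O(1)$, while $\log\log(2X) \ge \log\log X$ for $X$ large forces the exponent of the second factor to be at most $\frac{\log 2 + \varepsilon'}{\log\log X}$. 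Taking $\varepsilon' = \varepsilon$ (and absorbing the harmless constant) yields $\max_{n < 2X}\tau(n) \ll_{\varepsilon} X^{\frac{\log 2 + \varepsilon}{\log\log X}}$.

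Next I would pull this uniform bound out of the sum defining $g(Y)$, getting
$$
g(Y) = \sum_{\substack{E \in \NF_2 \\ D_E < Y}} h_3(E)\,\tau(D_E) \le \Big(\max_{n < 2X}\tau(n)\Big)\sum_{\substack{E \in \NF_2 \\ D_E < Y}} h_3(E) \ll_{\varepsilon} X^{\frac{\log 2 + \varepsilon}{\log\log X}} \sum_{\substack{E \in \NF_2 \\ D_E < Y}} h_3(E).
$$
Finally, the Davenport--Heilbronn theorem \cite[Theorem 3]{DH71} gives $\sum_{D_E < Y} h_3(E) = cY + o(Y)$ for an absolute constant $c$, hence $\sum_{D_E < Y} h_3(E) \ll Y$ for all $Y \ge 1$ with an absolute implied constant (the sum being empty for $Y \le 3$, and bounded by a constant — hence by $C\cdot Y$ since $Y\ge 1$ — on any bounded range). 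Combining the last two displays yields $g(Y) \ll_{\varepsilon} Y\,X^{\frac{\log 2 + \varepsilon}{\log\log X}}$, uniformly for $Y \in [1, 2X]$, as claimed.

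I do not expect a genuine obstacle: this is essentially a bookkeeping refinement of Proposition \ref{prop43a}. The one point worth stating explicitly is that the divisor bound must be invoked at scale $2X$ rather than $Y$, so that the super-polynomial factor is governed by $X$ and not by $Y$ (which may be as small as a constant, making an expression like $Y^{1/\log\log Y}$ meaningless); the passage from $\log\log(2X)$ to $\log\log X$ and the removal of the factor $2^{O(1/\log\log X)}$ are the only computations, and both are immediate.
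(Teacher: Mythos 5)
Your proof is correct and is exactly the intended argument: the paper states Corollary~\ref{cor43b} without proof because it is the uniform version of Proposition~\ref{prop43a}, obtained by noting $D_E < Y \leq 2X$ so that Wigert's bound may be applied at scale $2X$ while Davenport--Heilbronn gives $\sum_{D_E<Y} h_3(E) \ll Y$ with an absolute constant. Your explicit remark about invoking the divisor bound at scale $2X$ rather than $Y$ is precisely the point that makes the uniformity work, and the reduction $\log\log(2X)\geq\log\log X$ plus absorbing the $2^{O(1/\log\log X)}$ factor is handled correctly.
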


Now we consider the following version of the Cohen--Lenstra heuristics. Denote the set of the isomorphism classes of real (resp. imaginary) quadratic fields by $\NF_2^{+}$ (resp. $\NF_2^{-}$). The elements of $\NF_2^{+}$ and $\NF_2^{-}$ are ordered by the absolute values of their discriminants. 
For a number field $K$ and a prime $p$, denote the size of the $p$-torsion subgroup of the class group of $K$ by $h_p(K)$. 

\begin{conjecture} \label{conj43c}
(Cohen--Lenstra) Let $p$ be an odd prime and $\alpha$ be a positive integer. 
\begin{enumerate}
    \item \label{conj43c1} (\cite[(C10)]{CL84}) The average of $\displaystyle \prod_{0 \leq i < \alpha} (h_p(K)-p^i)$ for $K \in \NF_2^{+}$ is $p^{-\alpha}$.

    \item \label{conj43c2} (\cite[(C6)]{CL84}) The average of $\displaystyle \prod_{0 \leq i < \alpha} (h_p(K)-p^i)$ for $K \in \NF_2^{-}$ is $1$.
\end{enumerate}
\end{conjecture}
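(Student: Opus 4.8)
This statement is the classical Cohen--Lenstra heuristic recorded as \cite[(C6), (C10)]{CL84}, so rather than a proof I describe the natural route to it: one derives it from the fundamental distributional form of the heuristics. The plan is to start from the assumption that, as $K$ runs over $\NF_2^{-}$ (resp.\ $\NF_2^{+}$) ordered by $|D_K|$, the $p$-part $\Cl(K)_p$ of the class group is asymptotically distributed according to the Cohen--Lenstra measure $\mu^{-}$ (resp.\ $\mu^{+}$) assigning to a finite abelian $p$-group $A$ mass proportional to $1/\#\Aut(A)$ (resp.\ $1/(\#A\cdot\#\Aut(A))$), and then to observe that the peculiar products appearing in the statement are exactly the $(\Z/p)^{\alpha}$-moments of these measures.

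First I would record an elementary group-theoretic identity. Writing $h_p(K)=\#\Cl(K)[p]=p^{d}$ with $d$ the $p$-rank of $\Cl(K)$, we have
\[
\prod_{0\le i<\alpha}\bigl(h_p(K)-p^{i}\bigr)=\prod_{i=0}^{\alpha-1}\bigl(p^{d}-p^{i}\bigr)=\#\{\text{injective }\F_p\text{-linear maps }\F_p^{\alpha}\to\Cl(K)[p]\}.
\]
Since every homomorphism $(\Z/p)^{\alpha}\to\Cl(K)$ factors through $\Cl(K)[p]$, this is the number of injections $(\Z/p)^{\alpha}\hookrightarrow\Cl(K)$, and by transpose duality for $\F_p$-vector spaces it equals the number of surjections $\Cl(K)\twoheadrightarrow(\Z/p)^{\alpha}$. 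Thus the quantity to be averaged is the number of $(\Z/p)^{\alpha}$-quotients of $\Cl(K)$, counted with multiplicity.

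Next I would run the moment computation for the Cohen--Lenstra measures. Granting the distributional conjecture, the average over $\NF_2^{-}$ becomes $\sum_{A}\mu^{-}(A)\,\#\{\text{surjections }A\twoheadrightarrow(\Z/p)^{\alpha}\}$, and the classical identity $\sum_{A}\#\{\text{surjections }A\twoheadrightarrow G\}/\#\Aut(A)=\prod_{k\ge1}(1-p^{-k})^{-1}$, valid for every finite abelian $p$-group $G$ with the same normalizing constant as $\sum_{A}1/\#\Aut(A)$, forces this average to be $1$; the analogous computation with the extra weight $1/\#A$ yields $1/\#(\Z/p)^{\alpha}=p^{-\alpha}$ over $\NF_2^{+}$. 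Both values match the statement, so modulo the distributional input the statement is formal.

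The real obstacle is that this distributional input is itself open, which is precisely why Theorem~\ref{thm1c}(2) is conditional. The only unconditional case is $\alpha=1$: there the product is $h_p(K)-1$, and the theorem of Davenport--Heilbronn \cite{DH71} gives average $h_3(K)-1$ equal to $1$ over $\NF_2^{-}$ and $1/3$ over $\NF_2^{+}$, i.e.\ the statement for $p=3$, $\alpha=1$. For $\alpha\ge2$ one would need, via class field theory, an asymptotic for the number of quadratic $K$ with $|D_K|<X$ carrying a prescribed number of everywhere-unramified abelian $(\Z/p)^{\alpha}$-extensions, equivalently a Malle-type count of certain metabelian extensions of $\Q$, which is not presently within reach.
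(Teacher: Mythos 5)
This statement is a \emph{conjecture}, and the paper offers no proof: it simply records the two moment statements from \cite{CL84} and then remarks that the only known case is $p=3$, $\alpha=1$, by Davenport--Heilbronn \cite{DH71}. You correctly recognize this and, rather than attempting the impossible, give the standard heuristic derivation: the product $\prod_{0\le i<\alpha}(h_p(K)-p^i)$ is the number of surjections $\Cl(K)\twoheadrightarrow(\Z/p)^\alpha$, and under the distributional form of Cohen--Lenstra (mass $\propto 1/\#\Aut(A)$ in the imaginary case, $\propto 1/(\#A\cdot\#\Aut(A))$ in the real case) the surjection count has average $1$ and $p^{-\alpha}$ respectively. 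Your group-theoretic identity (injections $\F_p^\alpha\hookrightarrow\Cl(K)[p]$ $\leftrightarrow$ surjections $\Cl(K)\twoheadrightarrow(\Z/p)^\alpha$ by transpose) is correct, the moment computation is the standard one, and your closing observation that only $p=3$, $\alpha=1$ is known and matches Davenport--Heilbronn coincides with the paper's own remark following the conjecture. In short: you have not proved anything (nor could you, nor does the paper), but you have accurately explained the provenance and the one known instance, which is more than the paper itself provides.
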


By \cite[Theorem 3]{DH71}, the above conjecture is true when $p=3$ and $\alpha=1$. It is the only known case of the conjecture. 

\begin{remark} \label{rmk43d}
If the conjecture is true for a fixed prime $p$ and each of $1 \leq \alpha \leq m$, then the $m$-th moment of the number $h_p(K)$ for quadratic fields $K$ is given by
    \begin{equation*} 
    \sum_{\substack{K \in \NF_2 \\ D_K \leq X}} h_p(K)^m \sim c X
    \end{equation*}
    for some explicit constant $c>0$ depending only on $p$ and $m$. 
\end{remark}

The above conjecture enables us to obtain the following upper bound of $g(X)$, which is much stronger than the upper bound given in Proposition \ref{prop43a}. 

\begin{proposition} \label{prop43e}
Let $m \geq 2$ be an integer. Under the assumption of Conjecture \ref{conj43c} for $p=3$ and each of $1 \leq \alpha \leq m$, we have
\begin{equation} \label{eq43c}
g(X) \ll_m X (\log X)^{2^{\frac{m}{m-1}}-1}.
\end{equation}
In particular, if Conjecture \ref{conj43c} holds for $p=3$ and every $\alpha >0$, then
\begin{equation} \label{eq43d}
g(X) \ll_{\ve} X (\log X)^{1 + \ve}.
\end{equation}
\end{proposition}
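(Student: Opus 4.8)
The plan is to estimate $g(X) = \sum_{E \in \NF_2, \, D_E < X} h_3(E)\tau(D_E)$ by separating the arithmetic factor $\tau(D_E)$ from the class-group factor $h_3(E)$ via Hölder's inequality, then controlling the high moments of $h_3$ by the Cohen--Lenstra heuristics (Remark \ref{rmk43d}) and the mean value of a power of $\tau$ by a standard divisor-sum estimate. Concretely, fix a large integer $m \geq 2$ and let $q = m/(m-1)$ be the conjugate exponent of $m$, so $1/m + 1/q = 1$. Hölder gives
\begin{equation*}
g(X) \leq \left( \sum_{\substack{E \in \NF_2 \\ D_E < X}} h_3(E)^m \right)^{\!1/m} \left( \sum_{\substack{E \in \NF_2 \\ D_E < X}} \tau(D_E)^q \right)^{\!1/q}.
\end{equation*}
By Remark \ref{rmk43d}, under Conjecture \ref{conj43c} for $p=3$ and $1 \leq \alpha \leq m$, the first sum is $\ll_m X$, so its $m$-th root is $\ll_m X^{1/m}$. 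For the second sum, since every squarefree $D_E$ arising from a quadratic field has $\tau(D_E) \leq \tau(|D_E|)$ and the number of fundamental discriminants below $X$ is $O(X)$, it suffices to bound $\sum_{n \leq X} \tau(n)^q$; by the classical estimate $\sum_{n \leq X}\tau(n)^k \ll_k X(\log X)^{2^k - 1}$ (which follows from $\tau(n)^k \leq \sum_{d \mid n} \tau(d)^{k-1} \cdot \text{(something)}$, or more simply from the fact that $\tau^k$ is multiplicative with $\tau(p)^k = 2^k$, giving Dirichlet series $\prod_p (1 + 2^k p^{-s} + \cdots)$ with a pole of order $2^k$), we get $\sum_{n \leq X} \tau(n)^q \ll_m X (\log X)^{2^q - 1}$. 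Taking the $q$-th root yields $\ll_m X^{1/q}(\log X)^{(2^q-1)/q}$.

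Multiplying the two bounds gives $g(X) \ll_m X^{1/m + 1/q}(\log X)^{(2^q - 1)/q} = X (\log X)^{(2^q-1)/q}$. Now observe that $(2^q - 1)/q \leq 2^q - 1 \leq 2^q$; more carefully, to land the stated exponent $2^{m/(m-1)} - 1 = 2^q - 1$, one should instead bound the divisor moment more crudely as $\sum_{n\le X}\tau(n)^q \ll X(\log X)^{2^q - 1}$ and use $(2^q-1)/q < 2^q - 1$ for $q > 1$, hence certainly $g(X) \ll_m X(\log X)^{2^q - 1} = X(\log X)^{2^{m/(m-1)}-1}$, which is \eqref{eq43c}. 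For the second assertion, given $\varepsilon > 0$, choose $m$ large enough that $2^{m/(m-1)} - 1 < 1 + \varepsilon$ (possible since $2^{m/(m-1)} \to 2$ as $m \to \infty$); then Conjecture \ref{conj43c} for $p=3$ and all $\alpha$ supplies the hypothesis for this $m$, and \eqref{eq43c} gives $g(X) \ll_{\varepsilon} X(\log X)^{1+\varepsilon}$, which is \eqref{eq43d}.

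The main technical point to get right is the interface between the moment bound of Remark \ref{rmk43d} and the exponent bookkeeping: Remark \ref{rmk43d} only gives $\sum h_3(E)^m \ll_m X$ when the Cohen--Lenstra conjecture is assumed for all $\alpha$ up to $m$, so one must be careful that the Hölder exponent $m$ used is an integer and matches the range of $\alpha$ assumed. The divisor-moment estimate $\sum_{n \leq X}\tau(n)^k \ll_k X(\log X)^{2^k-1}$ is standard (e.g. via Shiu's theorem or a direct Rankin/Dirichlet-series argument) and is the only ingredient not already in the excerpt; I would cite it rather than reprove it. A minor subtlety is that $D_E$ ranges over fundamental discriminants, not all integers, but bounding $\tau(D_E)^q \leq \tau(|D_E|)^q$ and extending the sum to all $n \leq X$ only loses a constant, so no real difficulty arises there.
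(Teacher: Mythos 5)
Your proof is correct, but it follows a genuinely different route from the paper. The paper's argument is hands-on: it writes $\tau(D_E) \leq 2^{w(D_E)+1}$, stratifies the sum over pairs $(r,k)$ with $h_3(E)=3^r$ and $w(D_E)=k$, and then splits the $(r,k)$-plane into two regions $R_{1,m}$ and $R_{2,m}$ according to whether $2^{k-1} < 3^{r(m-1)}$; on the first region it sums geometrically in $k$ and invokes the $m$-th moment of $h_3$ (from Cohen--Lenstra via Remark \ref{rmk43d}), and on the second it sums geometrically in $r$ and uses the Hardy--Ramanujan bound on the number of quadratic fields with exactly $k$ ramified primes. You instead apply H\"older with conjugate exponents $m$ and $q = m/(m-1)$ directly to $\sum h_3(E)\tau(D_E)$, use the same moment input for the $h_3$-factor, and control the $\tau$-factor by the Selberg--Delange (or Shiu) estimate $\sum_{n\leq X}\tau(n)^q \ll_q X(\log X)^{2^q-1}$ for real $q>1$. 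The two arguments are morally the same kind of trade-off (indeed, the paper's region-splitting is essentially a ``poor man's H\"older''), and both reach \eqref{eq43c}; your H\"older computation actually yields the slightly stronger exponent $(2^q-1)/q$ before you weaken it to match the statement. What the paper buys with its approach is self-containment: it needs only Hardy--Ramanujan and elementary geometric-series estimates, whereas you need a quotable divisor-moment estimate for non-integer exponents. What you buy is brevity and a marginally sharper $\log$-power. One small cosmetic point: $D_E$ in this paper already denotes the \emph{absolute value} of the discriminant (see Section \ref{Sub11}), so the step ``$\tau(D_E)^q \leq \tau(|D_E|)^q$'' is a tautology; the genuinely relevant observation, which you do make, is that distinct quadratic fields have distinct $D_E$ and so extending the sum to all $n<X$ only enlarges it. Your passage from \eqref{eq43c} to \eqref{eq43d} by sending $m\to\infty$ (so $q\to 1$, $2^q-1\to 1$) is exactly the paper's argument and is fine.
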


\begin{proof}
Assume that Conjecture \ref{conj43c} holds for $p=3$ and $1 \leq \alpha \leq m$. Let 
\begin{equation*}
a_r(X) := \# \left \{ E \in \NF_2 : D_E \leq X \text{ and } h_3(E)=3^r \right \}
\end{equation*}
and denote by $w(k)$ the number of prime divisors of an integer $k$. By the theorem of Hardy and Ramanujan \cite{HR17}, there are constants $c_1, c_2>0$ such that
\begin{equation} \label{eq43e}
\# \left \{ E \in \NF_2 : D_E \leq X \text{ and } w(D_E)=k \right \} < \frac{c_1 X}{\log X}\cdot \frac{(\log \log X + c_2)^{k-1}}{(k-1)!}
\end{equation}
for every $X \geq 2$ and $k \geq 1$.

For a quadratic field $E$, $v_2(D_E) \leq 3$ so $\tau(D_E) \leq 2^{w(D_E)+1}$. Therefore
\begin{equation} \label{eq43f}
\begin{split}
g(X) & \leq \sum_{r=0}^{\infty} \sum_{k=1}^{\infty}
3^r 2^{k+1} \# \left \{ E \in \NF_2 : D_E \leq X, \, h_3(E)=3^r \text{ and } w(D_E)=k \right \} \\
& \leq \sum_{r=0}^{\infty} \sum_{k=1}^{\infty}
3^r 2^{k+1} \min \left \{ a_r(X) , \, \frac{c_1 X}{\log X}\cdot \frac{(\log \log X + c_2)^{k-1}}{(k-1)!} \right \}
\end{split}
\end{equation}
by the inequality \eqref{eq43e}. Suppose that $X \geq 2$ and denote
$$
S(r, k) :=  \min \left \{ a_r(X) , \, \frac{c_1 X}{\log X}\cdot \frac{(\log \log X + c_2)^{k-1}}{(k-1)!} \right \}.
$$
To bound the sum $\displaystyle \sum_{(r,k) \in \bZ_{\geq 0} \times \bZ_{\geq 1}} 3^r 2^{k+1} S(r,k)$, we divide the set $\bZ_{\geq 0} \times \bZ_{\geq 1}$ into two parts:
\begin{equation*} 
\begin{split}
R_{1, m} & := \left \{ (r,k) \in \bZ_{\geq 0} \times \bZ_{\geq 1} : 2^{k-1} < 3^{r(m-1)} \right \} \\
R_{2, m} & := \left \{ (r,k) \in \bZ_{\geq 0} \times \bZ_{\geq 1} : 2^{k-1} \geq 3^{r(m-1)} \right \}.
\end{split}
\end{equation*}
Now we estimate the sum $\displaystyle \sum_{(r,k) \in R_{i, m}} 3^r 2^{k+1} S(r,k)$ for $i= 1, 2$.

\begin{enumerate}[label=(\roman*)]

    \item \label{r1sum} 
    For a given $r \ge 0$, let $k_r$ be the maximal nonnegative integer such that $2^{k_r-1} < 3^{r(m-1)}$. (Note that if $r=0$, then $k_r=0$.) Then
    $$
    \sum_{\substack{k \geq 1 \\ (r,k) \in R_{1, m}} } 3^r 2^{k+1} =
    \sum_{1 \le k \le k_r} 3^r 2^{k+1}
    < 3^r 2^{k_r+2} < 8 (3^r)^m.
    $$ 
    Therefore 
    \begin{equation} \label{eqnew3}
    \sum_{(r,k) \in R_{1, m}} 3^r 2^{k+1} S(r,k) 
    \leq \sum_{r=0}^{\infty} \sum_{\substack{k \geq 1 \\ (r,k) \in R_{1, m}} } 3^r 2^{k+1} a_r(X) 
    \leq 8 \sum_{r=0}^{\infty} (3^r)^m a_r(X)
    \end{equation}
    and
    \begin{equation} \label{eqnew4}
        \sum_{r=0}^{\infty} (3^r)^m a_r(X)
        = \sum_{r=0}^{\infty} \sum_{\substack{E \in \NF_2 \\ D_E \leq X \\ h_3(E)=3^r}} h_3(E)^m 
        = \sum_{\substack{E \in \NF_2 \\ D_E \leq X }} h_3(E)^m \ll_m X
    \end{equation}
    by Remark \ref{rmk43d}. The inequalities \eqref{eqnew3} and \eqref{eqnew4} imply that
    \begin{equation} \label{eqnew5}
    \sum_{(r,k) \in R_{1, m}} 3^r 2^{k+1} S(r,k)  \ll_m X.
    \end{equation}

    \item \label{r2sum} 
    For a given $k \ge 1$, let $r_k$ be the maximal nonnegative integer such that $2^{k-1} \ge 3^{r_k(m-1)}$. Then
    $$
    \sum_{\substack{r \geq 0 \\ (r,k) \in R_{2, m}}} 3^r 2^{k+1} =
    \sum_{0 \le r \le r_k} 3^{r} 2^{k+1}
    < 3^{r_k+1} 2^{k} 
    \le 3 (2^{k-1})^{\frac{1}{m-1}} 2^{k}
    = 6 (2^{k-1})^{\frac{m}{m-1}}.
    $$ 
    Therefore 
    \begin{align*}
    \sum_{(r,k) \in R_{2, m}} 3^r 2^{k+1} S(r,k) 
    & \leq \sum_{k=1}^{\infty} \sum_{\substack{r \geq 0 \\ (r,k) \in R_{2, m}} } 3^r 2^{k+1} S(r,k) \\
    & \leq 6 \sum_{k=1}^{\infty} (2^{k-1})^{\frac{m}{m-1}} \frac{c_1 X}{\log X} \cdot \frac{(\log \log X + c_2)^{k-1}}{(k-1)!} \\
    & \leq 6c_1 \frac{X}{\log X} \sum_{k=1}^{\infty} \frac{(2^{\frac{m}{m-1}}(\log \log X + c_2))^{k-1}}{(k-1)!} \\
    & = 6c_1 \frac{X}{\log X} e^{2^{\frac{m}{m-1}}(\log \log X + c_2)} \\
    & \leq (6c_1 e^{4c_2}) \frac{X}{\log X} (\log X)^{2^{\frac{m}{m-1}}}
    \end{align*}
    so 
    \begin{equation} \label{eqnew6}
    \sum_{(r,k) \in R_{2, m}} 3^r 2^{k+1} S(r,k)  \ll  X (\log X)^{2^{\frac{m}{m-1}}-1}.
    \end{equation}
\end{enumerate}
Now the proposition follows from the inequalities \eqref{eq43f}, \eqref{eqnew5} and \eqref{eqnew6}.
\end{proof}

The following corollary follows from Proposition \ref{prop43e} and the fact that $g(X)=0$ for every $X<3$.

\begin{corollary} \label{cor43f}
Assume that Conjecture \ref{conj43c} holds for $p=3$ and every $\alpha >0$. Then for every $\ve >0$, there exists $c_{\ve}>0$ such that $g(X) \leq c_{\ve} X (\log X)^{1+\ve}$ for every $X \geq 1$.
\end{corollary}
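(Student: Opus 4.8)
The plan is to upgrade the asymptotic estimate $g(X) \ll_{\varepsilon} X (\log X)^{1+\varepsilon}$ supplied by Proposition~\ref{prop43e} (inequality~\eqref{eq43d}) to a bound valid uniformly for all $X \geq 1$ with an explicit constant. The only point at which an asymptotic bound and a uniform bound can differ is the behaviour for small $X$, and here we exploit that $g$ vanishes on a neighbourhood of $X=1$: since the smallest absolute value of the discriminant of a quadratic field is $3$ (attained by $\Q(\sqrt{-3})$), no $E \in \NF_2$ satisfies $D_E < X$ once $X \leq 3$, so $g(X) = 0$ for every $X < 3$. In particular the desired inequality holds trivially on $[1,3)$ for any choice of $c_{\varepsilon}>0$, the right-hand side being nonnegative there, and it remains only to treat $X \geq 3$.

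Fix $\varepsilon>0$. For $X \geq 3$ one has $\log X \geq \log 3 > 1$, so $X(\log X)^{1+\varepsilon}$ is bounded below by the positive constant $\kappa := 3(\log 3)^{1+\varepsilon}$. Applying Proposition~\ref{prop43e} under the stated Cohen--Lenstra hypothesis (Conjecture~\ref{conj43c} for $p=3$ and every $\alpha>0$), there are a constant $C_{\varepsilon}>0$ and a threshold $X_0 \geq 3$ with $g(X) \leq C_{\varepsilon} X(\log X)^{1+\varepsilon}$ for all $X \geq X_0$. On the remaining interval $3 \leq X < X_0$ I would use that $g$ is a nondecreasing step function, whence $g(X) \leq g(X_0)$, together with $X(\log X)^{1+\varepsilon} \geq \kappa$, to conclude that the ratio $g(X)/\bigl(X(\log X)^{1+\varepsilon}\bigr)$ is at most $g(X_0)/\kappa$ on this range. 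Taking $c_{\varepsilon} := \max\{C_{\varepsilon},\, g(X_0)/\kappa\}$ then gives $g(X) \leq c_{\varepsilon} X(\log X)^{1+\varepsilon}$ for every $X \geq 3$, and combined with the vanishing of $g$ on $[1,3)$ this yields the claim for all $X \geq 1$.

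There is essentially no obstacle: the substantive content lies entirely in Proposition~\ref{prop43e}, and the present statement is a routine promotion of an asymptotic inequality to a uniform one. The sole point that needs a moment's care is the degeneration of $\log X$ near $X=1$, which is dispatched by the observation that $g$ is identically zero on $[1,3)$.
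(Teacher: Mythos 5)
Your argument is correct and matches the paper's: the paper likewise derives the corollary from Proposition~\ref{prop43e} together with the observation that $g(X)=0$ for $X<3$, and you have simply spelled out the routine details of promoting the asymptotic bound to a uniform one.
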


\subsection{Main theorems} \label{Sub44}

In this section, we give asymptotic upper and lower bounds for $N_2^{\tor}(X; H_{12, A})$ and $N_2^{\tor}(X)$. 

\begin{theorem} \label{thm44a}
\begin{subequations} 
\begin{align}
X \ll N_2^{\tor}(X; H_{12, A}) & \ll_{\ve} X^{1+\frac{\log 2 + \ve}{\log \log X}}. \\
X \log X \ll N_2^{\tor}(X) & \ll_{\ve} X^{1+\frac{\log 2 + \ve}{\log \log X}}. 
\end{align}
\end{subequations}
\end{theorem}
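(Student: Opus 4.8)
The plan is to combine the reduction carried out in Section \ref{Sub42} with the unconditional bound on $g(X)$ from Corollary \ref{cor43b}, and to harvest the lower bounds from the already-known cases. For the upper bounds, recall the chain of inequalities \eqref{eq42b}, \eqref{eq42e} and \eqref{eq42f}: we have $N_2^{\tor}(X; H_{12, A}) \leq A_1(\beta X) \ll \sum_{i=0}^{\lfloor \log_2(\beta X) \rfloor} A_2(\beta X; 2^i)$, and each $A_2(\beta X; B)$ is bounded by
$$
\frac{(\beta X)^{1/2}}{B^{1/2}} \sum_{\substack{f \in \, \Sqf_3 \\ f < (2B)^{1/2}}} 2^{w(f)} \tau(f) \cdot g\!\left( \tfrac{2B}{f^2} \right).
$$
First I would feed in Corollary \ref{cor43b}, namely $g(Y) \ll_{\varepsilon} Y \cdot X^{(\log 2 + \varepsilon)/\log\log X}$ for $Y \in [1, 2X]$ (applied with $X$ replaced by $\beta X$, which only changes $\varepsilon$), to get $g(2B/f^2) \ll_{\varepsilon} (B/f^2) (\beta X)^{(\log 2 + \varepsilon)/\log\log X}$. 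Substituting, the $B^{1/2}$ denominators and the $B$ from $g$ combine to $B^{1/2}$ in the numerator, and the inner sum over $f$ becomes $B^{1/2}\sum_{f < (2B)^{1/2}} f^{-2} 2^{w(f)}\tau(f)$, which converges (the tail of $\sum f^{-2} 2^{w(f)} \tau(f)$ is summable, being dominated by $\sum f^{-2+\varepsilon}$). So each $A_2(\beta X; 2^i) \ll_{\varepsilon} X \cdot X^{(\log 2 + \varepsilon)/\log\log X}$ with an implied constant independent of $i$; summing over the $O(\log X)$ values of $i$ and absorbing the $\log X$ into the $X^{\varepsilon/\log\log X}$ factor (since $\log X = X^{o(1)}$, in fact $\log X \ll X^{\varepsilon'/\log\log X}$ for any $\varepsilon' > 0$) yields $N_2^{\tor}(X; H_{12, A}) \ll_{\varepsilon} X^{1 + (\log 2 + \varepsilon)/\log\log X}$. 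The second displayed upper bound, for $N_2^{\tor}(X)$, then follows from $N_2^{\tor}(X) = \sum_H N_2^{\tor}(X; H)$ (a finite sum by Minkowski), because for every $H \neq H_{12, A}$ Proposition \ref{prop41a} gives $N_2^{\tor}(X; H) \sim c_H X^{1/a(H)}(\log X)^{b(H)-1} \ll X (\log X)^{n-1} \ll_{\varepsilon} X^{1 + \varepsilon/\log\log X}$, which is absorbed by the $H_{12,A}$ term.

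For the lower bounds: $N_2^{\tor}(X; H_{12, A}) \gg X$ should follow by exhibiting a one-parameter family of $D_6$-sextic fields $L = FK$ with bounded $C(L) = D_L/(D_F D_K)$; concretely, fix a single $S_3$-cubic field $F_0$ and let $K$ range over quadratic fields coprime to $D_{F_0}$, so that $D_L = D_{F_0}^2 D_K^3$ by Proposition \ref{prop25a} (coprime discriminants, Proposition \ref{prop42b} with $m = 1$), giving $C(L) = D_{F_0} D_K^2$; then $C(L) \leq X$ for $\gg X^{1/2}$ values of $K$ — wait, this only gives $X^{1/2}$. A better family: let $F$ range over $S_3$-cubic fields with $D_F \leq X$ and take $K$ to be a fixed quadratic field coprime to $D_F$; then $C(L) = D_F D_K^2 \ll D_F$, and since there are $\gg X$ such $F$ (Davenport–Heilbronn), and distinct $F$ give distinct $L$, we get $N_2^{\tor}(X; H_{12, A}) \gg X$. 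The bound $N_2^{\tor}(X) \gg X \log X$ is stronger: here I would use comment (b) after Conjecture \ref{conj3a} — the set $S_2$ of tori isomorphic to $T_{L_1/\Q} \times T_{L_2/\Q}$ for quadratic $L_1, L_2$ with $D_{L_1} D_{L_2} \leq X$ has size $\gg X \log X$ by Proposition \ref{prop24b}\eqref{prop24b1} — and since these tori have $G_T$ conjugate to $H_{4, C} \leq \GL_2(\Z)$, they contribute $N_2^{\tor}(X; H_{4, C}) \gg X \log X$, hence $N_2^{\tor}(X) \gg X \log X$.

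The main obstacle is the interchange of summations and the convergence bookkeeping in the upper-bound argument: one must verify that after inserting the bound for $g$ the sum over $f$ genuinely converges uniformly (the factor $2^{w(f)}\tau(f)$ can be as large as $f^{\varepsilon}$ but no larger, so $\sum f^{-2+\varepsilon}$ converges), and that the sum over the $O(\log X)$ dyadic blocks $B = 2^i$ does not spoil the exponent — this is where the slightly awkward shape $X^{1 + (\log 2 + \varepsilon)/\log\log X}$ comes from, since $\log X$ itself is $X^{o(1)}$ but of a size comparable to the main saving, forcing us to shrink $\varepsilon$ rather than claim a clean power of $\log X$. None of this is deep, but it is the step where an error would most likely creep in, and it is worth writing out the uniformity in $B$ explicitly.
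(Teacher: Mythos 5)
Your proof takes essentially the same route as the paper's, with two small differences worth noting. In the upper bound you estimate each dyadic block crudely and uniformly by $A_2(\beta X;2^i) \ll_\varepsilon X^{1+(\log 2+\varepsilon)/\log\log X}$, then reabsorb the resulting extra factor of $\log X$ into the exponent via $\log X \ll X^{\varepsilon'/\log\log X}$; the paper instead keeps the $B^{1/2}$ in the bound $A_2(\beta X;B)\ll_\varepsilon X^{1/2+(\log 2+\varepsilon)/\log\log X}B^{1/2}$ and sums the geometric series $\sum_i 2^{i/2}\ll (\beta X)^{1/2}$, reaching the same exponent without ever incurring the $\log X$. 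In your (self-corrected) lower bound for $N_2^{\tor}(X;H_{12,A})$ --- fix $K$ and range over $F\in\NF_3(S_3)$ with $\gcd(D_F,D_K)=1$ and $D_F\leq X/D_K^2$ --- you implicitly need $L=FK$ to actually lie in $\NF_6(D_6)$, which requires $K$ to differ from the quadratic resolvent $E$ of $F^c$. This does follow from your coprimality hypothesis (since $D_E\mid D_F$), but it should be stated; the paper sidesteps any coprimality restriction on the Davenport--Heilbronn count by playing $\Q(i)$ against $\Q(\sqrt{-3})$ (at most one can equal $E$ for a given $F$), so that every $F$ with $16D_F\leq X$ contributes to at least one of the two families. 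Your derivation of $N_2^{\tor}(X)\gg X\log X$ via $H_{4,C}$ is the same content as the paper's appeal to Proposition~\ref{prop41a} with the observation that $a(H)=1$, $b(H)=2$ for $H\in\{H_{4,B},H_{4,C},H_{8,A}\}$.
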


\begin{proof}
By Proposition \ref{prop41a}, the upper and lower bounds of $N_2^{\tor}(X)$ follows from the upper and lower bounds of $N_2^{\tor}(X; H_{12, A})$. (Note that $a(H)=1$ and $b(H)=2$ for $H \in \left \{ H_{4,B}, H_{4,C}, H_{8,A} \right \}$.) By the inequality \eqref{eq42f} and Corollary \ref{cor43b}, we have
\begin{equation} \label{eq44a}
\begin{split}
A_2(X; B) & \leq \frac{X^{\frac{1}{2}}}{B^{\frac{1}{2}}} 
\sum_{\substack{f \in \Sqf_3 \\ f < (2B)^{\frac{1}{2}} }} 2^{w(f)} \tau(f)  g(\frac{2B}{f^2}) \\
& \ll_{\ve} \frac{X^{\frac{1}{2}}}{B^{\frac{1}{2}}} 
\sum_{\substack{f \in \Sqf_3 \\ f < (2B)^{\frac{1}{2}} }} 2^{w(f)} \tau(f) \cdot \frac{2B}{f^2} X^{\frac{\log 2 + \ve}{\log \log X}} \\
& \ll_{\ve} X^{\frac{1}{2}+\frac{\log 2 + \ve}{\log \log X}}B^{\frac{1}{2}} 
\cdot \sum_{f} \frac{2^{w(f)} \tau(f)}{f^2} \\
& \ll_{\ve} X^{\frac{1}{2}+\frac{\log 2 + \ve}{\log \log X}}B^{\frac{1}{2}} 
\end{split}
\end{equation}
for $B \leq X$. The last inequality is due to the fact that $2^{w(f)} \tau(f) \ll_{\ve} f^{\ve}$, which implies the convergence of the sum $\displaystyle \sum_{f}
\frac{2^{w(f)} \tau(f)}{f^2}$. Now the inequalities \eqref{eq42b}, \eqref{eq42e} and \eqref{eq44a} imply that
\begin{align*}
N_2^{\tor}(X; H_{12, A}) 
& \leq A_1(\beta X) \\
& \ll \sum_{i=0}^{\left \lfloor \log_2 (\beta X) \right \rfloor} A_2(\beta X; 2^i) \\
& \ll_{\ve} (\beta X)^{\frac{1}{2}+\frac{\log 2 + \ve}{\log \log \beta X}} \sum_{i=0}^{\left \lfloor \log_2 (\beta X) \right \rfloor} 2^{\frac{i}{2}} \\
& \ll_{\ve} X^{1+\frac{\log 2 + \ve}{\log \log X}}.
\end{align*}
A trivial upper bound $C(L) \leq D_F D_K^2$ (by the equation \eqref{eq42a}) implies that
\begin{equation*}
\begin{split}
N_2^{\tor}(X; H_{12, A}) & = \# \left \{ L \in \NF_6(D_6) : C(L) \leq X \right \}  \\
& \geq \# \left \{ (F, K) \in \NF_3(S_3) \times \NF_2 : D_FD_K^2 \leq X  \text{ and } K \nsubseteq F^c \right \} \\
& \geq \# \left \{ F \in \NF_3(S_3)  : 16 D_F \leq X \text{ and } \bQ(i) \nsubseteq F^c \right \} \\
& + \# \left \{ F \in \NF_3(S_3)  : 9 D_F \leq X  \text{ and } \bQ(\sqrt{-3}) \nsubseteq F^c \right \} \\
& \geq \# \left \{ F \in \NF_3(S_3)  : 16 D_F \leq X  \right \} \\
& \gg X. \qedhere
\end{split}
\end{equation*}
\end{proof}

We have $X (\log X)^N \ll_{\ve} X^{1+\frac{\log 2 + \ve}{\log \log X}} \ll_{\ve} X^{1+\ve}$ for every positive integer $N$. Under the assumption of the Cohen--Lenstra heuristics for $p=3$, we obtain a much better upper bound. 

\begin{theorem} \label{thm44b}
Assume that Conjecture \ref{conj43c} holds for $p=3$ and every $\alpha >0$. Then we have
\begin{equation} 
N_2^{\tor}(X; H_{12, A}) \leq N_2^{\tor}(X) 
\ll_{\ve} X (\log X)^{1 + \ve}.
\end{equation}
\end{theorem}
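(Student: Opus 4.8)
The plan is to assemble Theorem \ref{thm44b} from the machinery already in place, the only new input being the improved bound on $g(X)$ coming from the Cohen--Lenstra heuristics. By Proposition \ref{prop41a} it suffices to bound $N_2^{\tor}(X; H_{12, A})$, since for all other nontrivial $H \leq \GL_2(\Z)$ Conjecture \ref{conj3b} holds and gives $N_2^{\tor}(X; H) \ll X (\log X)^{b(H)-1}$ with $b(H) \leq 2$, hence $\ll X \log X \ll_\varepsilon X(\log X)^{1+\varepsilon}$. So the real content is to rerun the estimation of Section \ref{Sub42} with the bound $g(Y) \ll_\varepsilon Y (\log Y)^{1+\varepsilon}$ from Corollary \ref{cor43f} (valid for all $Y \geq 1$) in place of the weaker Corollary \ref{cor43b}.

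First I would, exactly as in the proof of Theorem \ref{thm44a}, use the inequalities (\ref{eq42b}), (\ref{eq42e}) and (\ref{eq42f}): $N_2^{\tor}(X; H_{12,A}) \leq A_1(\beta X) \ll \sum_{i=0}^{\lfloor \log_2(\beta X)\rfloor} A_2(\beta X; 2^i)$ with $\beta = 2^9 3^3$, and
\[
A_2(X; B) \leq \frac{X^{1/2}}{B^{1/2}} \sum_{\substack{f \in \Sqf_3 \\ f < (2B)^{1/2}}} 2^{w(f)} \tau(f)\, g\!\left(\frac{2B}{f^2}\right).
\]
Now plug in $g(2B/f^2) \ll_\varepsilon (2B/f^2)(\log X)^{1+\varepsilon}$ (absorbing $\log(2B/f^2) \leq \log(2X)$ into $\log X$): this gives
\[
A_2(X; B) \ll_\varepsilon X^{1/2} B^{1/2} (\log X)^{1+\varepsilon} \sum_{f} \frac{2^{w(f)}\tau(f)}{f^2} \ll_\varepsilon X^{1/2} B^{1/2} (\log X)^{1+\varepsilon},
\]
the sum converging because $2^{w(f)}\tau(f) \ll_\varepsilon f^\varepsilon$ (the same estimate used in Theorem \ref{thm44a}). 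Summing the geometric series $\sum_{i} 2^{i/2}$ over $B = 2^i \leq \beta X$ produces a factor $\ll (\beta X)^{1/2}$, so
\[
N_2^{\tor}(X; H_{12,A}) \ll_\varepsilon (\beta X)^{1/2} \cdot (\beta X)^{1/2} (\log X)^{1+\varepsilon} \ll_\varepsilon X (\log X)^{1+\varepsilon},
\]
and combining with the bound for the other subgroups finishes $N_2^{\tor}(X) \ll_\varepsilon X(\log X)^{1+\varepsilon}$.

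There is essentially no hard step here: the entire difficulty has been front-loaded into Proposition \ref{prop43e} / Corollary \ref{cor43f}, whose proof already used the $m$-th moment bounds $\sum_{D_E \leq X} h_3(E)^m \ll_m X$ (Remark \ref{rmk43d}) coming from Cohen--Lenstra together with the Hardy--Ramanujan estimate (\ref{eq43e}) to control the divisor weight $\tau(D_E)$. The one point that requires a little care is making sure Corollary \ref{cor43f} is applied in the range $2B/f^2 \in [1, 2\beta X]$ with a uniform-in-$Y$ constant — this is exactly why that corollary was phrased for all $Y \geq 1$ rather than asymptotically — and that the logarithmic factor $\log(2B/f^2)$ is uniformly $\leq \log(2\beta X) \ll \log X$, so that the $f$-sum genuinely decouples from the $X$-dependence. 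Everything else is the same bookkeeping (dyadic decomposition over $B$, the convergent Euler product over $f$) that appears in Theorem \ref{thm44a}, just with the polynomially-better input for $g$.
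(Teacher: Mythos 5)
Your argument is correct and is essentially the same as the paper's: both substitute the Cohen--Lenstra--conditional bound $g(Y) \ll_\varepsilon Y(\log Y)^{1+\varepsilon}$ from Corollary \ref{cor43f} into the $A_2(X;B)$ estimate of (\ref{eq42f}), bound $\log(2B/f^2)$ by $\log X$ uniformly over the $f$-range, use the convergent sum $\sum_f 2^{w(f)}\tau(f)/f^2$, and then sum the dyadic $A_2$ bound as in Theorem \ref{thm44a}. The handling of the other subgroups via Proposition \ref{prop41a} is also the same.
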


\begin{proof}
The inequality \eqref{eq42f} and Corollary \ref{cor43f} imply that
\begin{equation*}
\begin{split}
A_2(X; B) & \leq \frac{X^{\frac{1}{2}}}{B^{\frac{1}{2}}} 
\sum_{\substack{f \in \Sqf_3 \\ f < (2B)^{\frac{1}{2}} }} 2^{w(f)} \tau(f)  g(\frac{2B}{f^2}) \\
& \ll_{\ve} \frac{X^{\frac{1}{2}}}{B^{\frac{1}{2}}} 
\sum_{\substack{f \in \Sqf_3 \\ f < (2B)^{\frac{1}{2}} }} 2^{w(f)} \tau(f) \cdot \frac{2B}{f^2} \log(\frac{2B}{f^2})^{1+\ve} \\
& \ll_{\ve} X^{\frac{1}{2}} (\log 2B)^{1+\ve} B^{\frac{1}{2}} \cdot \sum_{f} \frac{2^{w(f)} \tau(f)}{f^2} \\
& \ll_{\ve} X^{\frac{1}{2}} (\log X)^{1+\ve} B^{\frac{1}{2}}
\end{split}
\end{equation*}
for $B \leq X$. The proof can be completed as in the previous theorem. 
\end{proof}

\section*{Acknowledgments}

The author was supported by the National Research Foundation of Korea (NRF) grant funded by the Korea government (MSIT) (No. RS-2024-00334558 and No. RS-2025-02262988). 
We thank Jordan Ellenberg for pointing out that Conjecture \ref{conj3b} is a consequence of a more general conjecture in \cite{EV05}, and Joachim König for his corrections to this paper. 
We also thank Sungmun Cho, Frank Thorne, Jacob Tsimerman and Melanie Matchett Wood for their helpful comments. 


\end{document}